\newtheorem{theorem}{Theorem}[section]
\newtheorem{lemma}[theorem]{Lemma}
\newtheorem{observation}[theorem]{Observation}
\theoremstyle{remark}
\numberwithin{equation}{section}
\setlist[itemize,1]{label=$\bullet$}
\begin{document}

\title{On the Directed Hamilton-Waterloo Problem with Two Cycle Sizes}

\author{FAT\.{I}H YETG\.{I}N}
\address{Department of Mathematics, Gebze Technical University, Kocaeli, 41400, Turkey}
\email{fyetgin@gtu.edu.tr}

\author{U\u{g}ur Odaba\c{s}{\i}}
\address{Department of Engineering Sciences, Istanbul University-Cerrahpasa, Istanbul, 34320, Turkey}
\email{ugur.odabasi@iuc.edu.tr}

\author{S\.{I}Bel \"OZKAN}
\address{Department of Mathematics, Gebze Technical University, Kocaeli, 41400, Turkey}
\email{s.ozkan@gtu.edu.tr}

\date{(date1), and in revised form (date2).}
\subjclass[2010]{05C51,05C70}
\keywords{The Directed Hamilton-Waterloo Problem, directed factorization, complete symmetric digraph, directed cycle}

%\thanks{Corresponding author: U\u{g}ur Odaba\c{s}{\i}.}

\begin{abstract}
The Directed Hamilton-Waterloo Problem asks for a directed 2-factorization of the complete symmetric digraph $K_v^*$ where there are two non-isomorphic 2-factors. In the uniform version of the problem, factors consist of either directed $m$-cycles or $n$-cycles. In this paper, necessary conditions for a solution of this problem are given, and the problem is solved for the factors with $(m, n)\in \{(4,6),(4,8),(4,12),(4,16),\allowbreak(6,12),(8,16), (3,5),(3,15),(5,15)\}$.
\end{abstract}

\maketitle

\section{Introduction} \label{S:intro}
A decomposition of a graph $G$ is a set $\mathcal{H}=\{H_{1}, H_{2}, \ldots, H_{k}\}$ of subgraphs of $G$ such that $\bigcup_{i=1}^{k} E\left(H_{i}\right)=E(G)$ and $E(H_{i}) \cap E(H_{j})=\emptyset$  for $ i \neq j$. Such a decomposition is called an $\{H_{1}, H_{2}, \ldots, H_{k}\}$-decomposition of $G$. A factor in a graph $G$ is a spanning (not necessarily connected) subgraph of $G$. If a graph $G$ can be decomposed into $r_{i}$ factors isomorphic to the factor $F_{i}$ for $i\in [1, t]$, then we say that $G$ has a $\left\{F_{1}^{r_{1}}, F_{2}^{r_{2}}, \ldots, F_{t}^{r_{t}}\right\}$-factorization. When each $F_{i}$ factor consists of only $n_{i}$ cycles for $i\in [1, t]$, then we will call the $F_{i}$ factor as a $C_{n_{i}}$-factor and call this factorization as a $\left\{C_{n_{1}}^{r_{1}}, C_{n_{2}}^{r_{2}}, \ldots, C_{n_{t}}^{r_{t}}\right\}$-factorization where each $r_i$ is the number of $C_{n_{i}}$-factors.

Graph factorizations constitute an important part of graph decomposition problems, especially when each factor is of regular degree. A $k$-regular spanning subgraph of $G$ is called a $k$-factor of $G$. It is easy to see that a 1-factor is a perfect matching in a graph and a 2-factor is either an Hamilton cycle or union of cycles. When it comes to 2-factorizations, there are two well-known graph factorization problems. One problem is the Oberwolfach Problem, which is posed by Ringel (see \cite{Ringel1971}) as a seating arrangement problem at a meeting in Oberwolfach. Given a conference venue with $k_{i}$ round tables, each of which has $m_{i}$ seats for $i \in [1, t]$, it asks whether it is possible that each participant of the conference (say $v$ many for odd $v$) sits next to (left or right) each other participant exactly once at the end of $\frac{v-1}{2}$ nights. In graph theory language, it asks whether the complete graph $K_v$ (or $K_v-I$ in the spouse avoiding version for even $v$) decomposes into isomorphic $2$-factors where each $2$-factor consists of $k_{i}$ $m_{i}$-cycles for each $i \in [1, t]$. This problem is denoted by OP$(m_{1}^{k_{1}}, m_{2}^{k_{2}}, \ldots, m_{t}^{k_{t}})$. If there is only one type of cycle, say of length $m$, in the factor, it can be denoted as OP$(m^{k})$, and its solution gives a $\{C_{m}^{\frac{v-1}{2}}\}$-factorization (or in short, a $C_m$-factorization) of $K_v$.

The Hamilton-Waterloo Problem is a generalization of the Oberwolfach Problem where there are two conference venues (one in Hamilton and one in Waterloo as one may guess) with different seating arrangements. This time each $2$-factor can be isomorphic to one of the given two 2-factors, say $F_1$ or $F_2$. If $F_1$ consists of only $m$-cycles and $F_2$ consists of only $n$-cycles, then the corresponding Hamilton-Waterloo Problem is called as the uniform version, and it is denoted by HWP$(v; m^{r}, n^{s})$ where $r$ and $s$ are the number of $C_m$ and $C_n$-factors where $r+s= \frac{v-1}{2}$, respectively. Having a solution to HWP$(v; m^{r}, n^{s})$ means that $K_v$ has a $\left\{C_{m}^{r}, C_{n}^{s}\right\}$-factorization for all possible $r$ and $s$ in the range.

The uniform versions of both problems are well-studied. In articles \cite{Alspach1985,Alspach1989,Hoffman1991}, authors solved completely the uniform version of Oberwolfach Problem. But the general case of the Oberwolfach Problem is still open. It is known that OP$\left(3^{2}\right)$, OP$\left(3^{4}\right)$, OP$(4,5)$ and OP$\left(3^{2}, 5\right)$ have no solution. In \cite{Adams2006,Deza2010}, it is shown that OP$(m_{1}^{k_{1}}, m_{2}^{k_{2}}, \ldots, m_{t}^{k_{t}})$ has a solution for all $n \leq 40$ with the above exceptions.

As the first results on the uniform Hamilton-Waterloo Problem, Adams et al.\cite{Adams2002} showed that HWP$(v; m^{r},n^{s})$ has a solution for all $v\leq 16$ and gave solutions for the small cases where $(m, n)\in \{(4,6),(4,8)$,$(4,16),(8,16),(3,5),$ $(3,15),(5,15)\}$. Cycle sizes $(3,4)$ and in general $(4,m)$ for odd $m$ has been studied by several authors (see \cite{Bonvicini2017}, \cite{Danziger2009}, \cite{Keranen}, \cite{Odabas2016}).

When $m$ and $n$ are odd, problem is almost completely solved in \cite{Burgess2017, Burgess2018} for odd $v$. In \cite{D.Bryant2011}, the problem is solved in the case of both $m$ and $n$ are even and $v \equiv 0 $ $(\bmod 4)$ except possibly when $r=s=1$. When $m$ and $n$ are both even and $v \equiv 2$ $(\bmod 4)$, this problem is solved by R. Haggkvist in \cite{Haggkvist1985} whenever $r$ and $s$ are both even. Also, if $m$ is even and $m\vert n$, the problem is completely solved in \cite{D.Bryant2013}.

One generalization of these problems may be to consider sitting on the right and sitting on the left of a participant as separate entities. To represent such a sitting, one has to use directed cycles which led us to work on directed graphs. There are studies on the directed Oberwolfach Problem, and here we work on the directed version of the Hamilton-Waterloo Problem. 

We will denote a digraph $D$ as $D=(V(D),E(D))$, where $V(D)$ is the vertex set and $E(D)$ is the arc set. For clarity, edges and arcs are denoted by using curly braces and parentheses, respectively. For a simple graph $G$, we use $G^*$ to denote symmetric digraph with vertex set $V(G^*)=V(G)$ and arc set $E(G^*)=\bigcup_{\{x,y\}\in E(G)} \{(x,y),(y,x)\}$. Hence, $K_v^*$
and $K_{(x:y)}^*$ respectively denote the complete symmetric digraph of order $v$ and the complete symmetric equipartite digraph with $y$ parts of size $x$. Also, $\vv{C}_n$ will denote the directed cycle of order $n$.

Similarly, a set $\{H_{1}, H_{2}, \ldots, H_{k}\}$ of arc-disjoint subdigraphs of a digraph $D$ is called a decomposition of $D$ if  $\bigcup_{i=1}^{k} E\left(H_{i}\right)=E(D)$. If a symmetric digraph $G^*$ has decomposition which consists of $r_{i}$ factors having directed $n_{i}$ cycles for $i\in [1, t]$, then we say $G^*$ has a $\left\{\vv{C}_{n_{1}}^{r_{1}}, \vv{C}_{n_{2}}^{r_{2}}, \ldots, \vv{C}_{n_{t}}^{r_{t}}\right\}$-factorization.
%isomorphic to

In the Directed versions of the Oberwolfach and the Hamilton-Waterloo Problems, $K_v^*$ is decomposed into factors of directed cycles. Hence, the seating arrangement is done over $v-1$ nights. If the sizes of directed cycles are $m_{1}, m_{2}, \ldots, m_{t}$ and the number of each directed cycle $m_i$ is $k_i$ for $i\in [1, t]$ where $\sum_{i=1}^{t} k_i m_{i}=v$, the Directed Oberwolfach Problem is denoted by OP$^{*}(m_{1}^{k_1}, \ldots, m_{t}^{k_t})$. Similarly, HWP$^{*}(v; m^{r}, n^{s})$ denotes the uniform directed Hamilton-Waterloo Problem with directed cycle sizes $m$ and $n$. Again, if HWP$^{*}(v; m^{r}, n^{s})$ has a solution, it means that $K_{v}^{*}$ has a $\{\vv{C}_{m}^{r}, \vv{C}_{n}^{s}\}$-factorization for all $r$ and $s$ with $r+s = v-1$.

So far, the Directed Oberwolfach Problem has only partial results, but the Directed Hamilton-Waterloo Problem has not been studied yet up to our knowledge.

As the first result on the Directed Oberwolfach Problem, OP$^{*}(3^k)$ with an exception $v= 6$ is solved by Bermond et al. \cite{Bermond1979}. In \cite{Bennett1990}, Bennett and Zhang solved OP$^{*}(4^k)$ except for $v=12$, and Adams and Bryant solved the remaining case OP$^{*}(4^3)$ (in an unpublished paper ``Resolvable directed cycle systems of all indices for cycle length 3 and 4'').

In \cite{Alspach2003}, Alspach et al. showed that $K_{v}^{*}$ can be decomposed into $\vv{C}_{m}$ cycles with exceptions $(v, m) \neq(4,4),(6,3),(6,6)$ if and only if $m\vert v(v-1)$. They studied the problem in cases where $v$ and $m$ are even or odd, separately.

Burgess and Sajna \cite{Sajna2014} investigated the necessary and sufficient conditions for the Directed Oberwolfach Problem with cycles of length $m$. In case $m$ is even, they obtained complete solution and presented a partial solution for odd cycle size. Also, they conjectured that $K_{2 m}^{*}$ admits a directed $m$-cycle factorization for odd $m$ if and only if $m \geq 5$. In \cite{Sajna2018}, Burgess et al. proved this conjecture for $m \leq 49$. 

The following theorem summarizes the results of Bermond et al. and Burgess and Sajna.

\begin{theorem}\cite{Bermond1979, Sajna2014}\label{OP}
Let $m$ be even, or $m$ and $k$ be odd integers with $m \geq 3$. Then $\mathrm{OP}^{*}(m^k)$ has a solution if and only if $(m, k) \neq (6,1)$.
\end{theorem}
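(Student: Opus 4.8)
The plan is to prove the two directions of the equivalence separately and, within the (harder) sufficiency direction, to split according to the parity of $m$. Necessity is the routine part: for every $m\ge 3$ and $k\ge 1$ the prescribed $2$-factor -- a vertex-disjoint union of $k$ directed $m$-cycles -- exists as a digraph on $mk$ vertices, and since such a factor has exactly $mk$ arcs while $K_{mk}^*$ has $mk(mk-1)$ of them, the count $mk-1$ of factors is forced and numerically consistent; there is no divisibility condition to impose. So necessity reduces to listing the sporadic non-existences, and the one relevant in the stated range is $\mathrm{OP}^*(6^1)$, i.e.\ a directed Hamilton decomposition of $K_6^*$, which is impossible by the theorem of Alspach et al.\ \cite{Alspach2003} (taking $v=m=6$). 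For sufficiency I would then argue as follows.

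\emph{The case $m$ and $k$ both odd.} Here $v=mk$ is odd, and I would reduce to the \emph{undirected} uniform Oberwolfach problem. By its complete solution \cite{Alspach1985,Alspach1989,Hoffman1991} -- which has no exceptional instances when $v$ is odd -- the graph $K_v$ admits a $C_m$-factorization $\{F_1,\dots,F_{(v-1)/2}\}$. Orienting each $m$-cycle of a factor $F_i$ in each of its two cyclic senses produces two directed $C_m$-factors $F_i^{+}$ and $F_i^{-}$; since every edge $\{x,y\}$ of $K_v$ lies in exactly one $F_i$, the arcs $(x,y)$ and $(y,x)$ are each covered exactly once among $F_i^{+},F_i^{-}$ and in no other $F_j^{\pm}$. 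Hence $\{F_i^{+},F_i^{-}:1\le i\le (v-1)/2\}$ is a directed $C_m$-factorization of $K_v^*$ with $v-1$ factors, that is, a solution of $\mathrm{OP}^*(m^k)$. For $m=3$ this recovers the construction of Bermond et al.\ \cite{Bermond1979}.

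\emph{The case $m$ even.} Now $v=mk$ is even, the doubling device above is no longer available, and this is the substantive case -- essentially the content of \cite{Sajna2014} (the subcase $m=4$ having been settled earlier by Bennett and Zhang \cite{Bennett1990} together with Adams and Bryant for $v=12$). The base case $k=1$ is \cite{Alspach2003}: $K_m^*$ decomposes into $m-1$ directed $m$-cycles exactly when $m\notin\{4,6\}$, which is the origin of the $(6,1)$ exception (and, strictly, of a non-existence at $(4,1)$ as well). For $k\ge 2$ I would proceed recursively. Partition the $mk$ vertices into $k$ blocks of size $m$, so that $K_{mk}^*$ is the arc-disjoint union of the $k$ within-block copies of $K_m^*$ and the complete symmetric equipartite digraph $K_{(m:k)}^*$; then, when $m\notin\{4,6\}$, placing a directed $m$-cycle decomposition of $K_m^*$ on each block and uniting corresponding cycles across the $k$ blocks supplies $m-1$ factors, and a directed $C_m$-factorization of $K_{(m:k)}^*$ supplies the other $(k-1)m$. (When $m\in\{4,6\}$ one must instead absorb the $K_m^*$ blocks into larger units -- for example, blocks of size $2m$ carrying an $\mathrm{OP}^*(m^2)$-solution, plus a leftover to be handled directly when $k$ is odd -- since $K_4^*$ and $K_6^*$ admit no directed $m$-cycle decomposition.) The directed $C_m$-factorization of $K_{(m:k)}^*$ I would obtain by a frame/holey-factorization induction on the number of parts, with difference-method base cases (starters and adders in $\mathbb{Z}_{mk}$ or $\mathbb{Z}_m\times\mathbb{Z}_k$). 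The main obstacle is exactly this equipartite factorization, together with the finitely many small pairs $(m,k)$ at which the recursion bottoms out and the $m\in\{4,6\}$ complications; these demand explicit direct constructions (and, in the tightest instances, a computer search), and verifying that the equipartite piece and the within-block pieces can always be made to fit together compatibly is where the bulk of the effort of \cite{Sajna2014} lies.
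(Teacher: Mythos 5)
This theorem is quoted, not proved, in the paper: it is imported verbatim from \cite{Bermond1979,Sajna2014}, so there is no in-paper argument to compare yours against, and your proposal has to be judged as a reconstruction of the literature. Your odd case is complete and correct: for $m,k$ odd the order $v=mk$ is odd, $K_v$ has a $C_m$-factorization by the uniform Oberwolfach solution (no exceptions for odd $v$), and replacing each undirected factor by its two orientations gives the $v-1$ required directed factors; this is exactly the doubling device the paper itself records as Observation \ref{obs} and Lemma \ref{lemma1.3}. Your parenthetical observation about $(4,1)$ is also right: $K_4^*$ has no directed Hamiltonian decomposition, so $\mathrm{OP}^*(4^1)$ is unsolvable and Burgess and Sajna list both $(4,1)$ and $(6,1)$ as exceptions; the statement as transcribed in Theorem \ref{OP} is slightly too generous on this point.

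The even case, however, is where the theorem actually lives, and there your text is an outline rather than a proof. The skeleton is sound: $K_{mk}^*\cong kK_m^*\oplus K_{(m:k)}^*$, within-block directed Hamiltonian decompositions for even $m\ge 8$ (Tillson, contained in \cite{Alspach2003}), and the count $(m-1)+m(k-1)=mk-1$; moreover, for even $m\ge 8$ the $\vv{C}_m$-factorization of $K_{(m:k)}^*$ you need comes cheaply, since Liu's theorem (Theorem \ref{liu}) gives a $C_m$-factorization of $K_{(m:k)}$ with none of its exceptional triples arising in that range, and Lemma \ref{lemma1.3} doubles it. But for $m\in\{4,6\}$ the absorption you gesture at (blocks of size $2m$ carrying $\mathrm{OP}^*(m^2)$, plus a leftover block when $k$ is odd) forces factorizations of complete multipartite symmetric digraphs with non-uniform part sizes, which Liu's theorem does not cover, and the ``frame/holey-factorization induction with difference-method base cases'' is named but never carried out -- you concede that explicit constructions and computer searches are still needed. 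Those constructions are precisely the content of \cite{Sajna2014} (and of \cite{Bennett1990} together with Adams--Bryant for $m=4$), so as a self-contained proof the even case has a genuine gap, even though as a map of how the cited works prove the theorem your sketch is accurate.
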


In \cite{Sajna2020}, Shabani and Sajna proved that $K_{v}^{*}$ has a $\{\vv{C}_{2}, \vv{C}_{v-2}\}$-factorization for $v \geq 5$ and obtained the necessary and sufficient conditions for $K_{v}^{*}$ to admit a $\{\vv{C}_{m}, \vv{C}_{v-m}\}$-factorization for $2 \leq m \leq v-2$ and for odd $v$. Also they showed that if $v \geq 5$ and $v \equiv 1,3, \text{or} \, 7 \pmod 8$, then $K_{v}^{*}$ has a $\{\vv{C}_{2}, \vv{C}_{2}, \ldots, \vv{C}_{2}, \vv{C}_{3}\}$-factorization.

In this paper, we follow the lead of the first results on the undirected Hamilton-Waterloo Problem and give solutions to the cases with directed cycle sizes $\{(4,6),(4,8),(4,12),\allowbreak(4,16),(6,12),(8,16),(3,5),(3,15),(5,15)\}$. We first give the necessary conditions for a solution to HWP$^{*}(v;m^{r}, n^{s})$ to exist. Second, we make the observation that for any given solution to HWP$(v; m^{r}, n^{s})$, one can construct a solution to HWP$^{*}(v; m^{2r}, n^{2s})$ for odd $v$. Then, we give two different constructions depending on the parity of the cycle sizes. For even cycle sizes, using our construction in Lemma \ref{mainlemma} and the preliminary Lemmata required in the construction, HWP$^{*}(v; m^{r}, n^{s})$ is solved for $(m, n)\in \{(4,6),(4,8),(4,12),\allowbreak(4,16),(6,12),(8,16)\}$ with $r+s=v-1$. For odd cycle sizes, we give new a construction in Lemma \ref{oddmainlemma} when $v$ is odd. Using this construction and the results required for this construction, we state that HWP$^{*}(v; m^{r}, n^{s})$ has a solution for $(m, n)\in \{(3,5),(3,15),(5,15)\}$ for odd $v$. Constructions given in Lemma \ref{mainlemma} and Lemma \ref{oddmainlemma} are general constructions and they can be used to solve the problem also for the other cycle sizes as long as the necessary small cases can be found.

Let's first start with the necessary conditions and move to the preliminary results then.

\begin{lemma}\label{necessary}
If $\mathrm{HWP}^{*}(v; m^{r}, n^{s})$ has a solution then following statements hold,
\begin{enumerate}
\item if $r>0$, $v \equiv 0 \pmod m$, 
\item if $s>0$, $v \equiv 0 \pmod n$, 
\item $r+s=v-1$.
\end{enumerate}
\end{lemma}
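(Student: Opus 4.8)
The plan is to extract each of the three conditions directly from the defining properties of a $\{\vv{C}_m^r,\vv{C}_n^s\}$-factorization of $K_v^*$, reasoning separately about vertices, arcs, and factors. Throughout, I would fix a solution to $\mathrm{HWP}^*(v;m^r,n^s)$, i.e.\ a decomposition of $K_v^*$ into $r$ factors each consisting of vertex-disjoint directed $m$-cycles together with $s$ factors each consisting of vertex-disjoint directed $n$-cycles.

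For part (1), I would note that if $r>0$ then there exists at least one $\vv{C}_m$-factor $F$. Since a factor is by definition a spanning subdigraph of $K_v^*$, the directed $m$-cycles of $F$ partition the vertex set $V(K_v^*)$, which has size $v$. As each cycle uses exactly $m$ vertices, the number of cycles in $F$ is $v/m$, which must be a positive integer; hence $m \mid v$, i.e.\ $v\equiv 0\pmod m$. Part (2) is identical with $n$ and $s$ in place of $m$ and $r$.

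For part (3), I would count arcs. The arc set of $K_v^*$ has size $v(v-1)$, since for each of the $\binom{v}{2}$ unordered pairs we have both arcs. Each directed $\ell$-cycle on $\ell$ vertices contains exactly $\ell$ arcs, so a $\vv{C}_m$-factor, having $v/m$ cycles, contains exactly $v$ arcs; likewise each $\vv{C}_n$-factor contains exactly $v$ arcs. (Equivalently, every factor here is a spanning $1$-in-$1$-out subdigraph, hence has $v$ arcs.) Since the $r+s$ factors partition $E(K_v^*)$, summing gives $(r+s)\,v = v(v-1)$, and dividing by $v$ yields $r+s = v-1$.

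None of these steps presents a genuine obstacle: each is a one-line counting argument once the defining properties of factors and directed cycles are unpacked, and parts (1) and (2) are in fact prerequisites that make the arc count in part (3) well-defined (they guarantee each factor really is spanning by $m$- or $n$-cycles). The only mild subtlety worth stating explicitly is that a $\vv{C}_\ell$-factor of $K_v^*$ necessarily consists of $v/\ell$ disjoint cycles — which is immediate from the spanning, vertex-disjoint requirement — so I would make that observation explicit at the outset and then the three claims follow in sequence.
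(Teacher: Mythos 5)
Your proof is correct and is precisely the standard counting argument the paper implicitly relies on: the paper states Lemma \ref{necessary} without proof, and your vertex-count for the divisibility conditions and arc-count ($v(v-1)$ arcs, $v$ arcs per directed 2-factor) for $r+s=v-1$ is exactly the intended justification.
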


\section{Preliminary Results}
If $G_{1}$ and $G_{2}$ are two edge disjoint graphs with $V(G_1)=V(G_2)$, then we use $G_{1} \oplus G_{2}$ to denote the graph on the same vertex set 
%$V\left(G_{1} \oplus G_{2}\right)=V(G_1)=V(G_2)$ 
with $E\left(G_{1} \oplus G_{2}\right)=E\left(G_{1}\right) \cup E\left(G_{2}\right)$. We will denote the vertex disjoint union of $\alpha$ copies of $G$ by $\alpha G$. Finally, $\overline{K}_{n}$ denotes the empty graph on $n$ vertices.

Let $G$ and $H$ be graphs, the wreath product of $G$ and $H$, denoted by $G \wr H$, is the graph obtained by replacing each vertex $x$ of $G$ with a copy of $H$, say $H_x$, and replacing each edge $\{x, y\}$ of $G$ with the edges joining every vertex of $H_{x}$ to every vertex of $H_{y}$. 

In case $G$ and $H$ are both digraphs, then the $G \wr  H$ is the digraph obtained by replacing each vertex $x$ of $G$ with a copy of $H$, say $H_x$, and replacing each arc $(x, y)$ of $G$ by an arc pointing from every vertex of $H_{x}$ to every vertex of $H_{y}$. For example, $K_x^* \wr \overline{K}_{y} \cong K_{(y:x)}^*$, $\overline{K}_{x}\wr  K_y^*  \cong xK_y^*$ and $\overline{K}_{x} \wr \overline{K}_{y} \cong \overline{K}_{xy}$.

If $G$ has a $\{H_1, H_2, \dots, H_k\}$-decomposition, then $G\wr  \overline{K}_{n}$ has a $\{H_1\wr  \overline{K}_{n}, H_2\wr  \overline{K}_{n}, \dots, H_k\wr  \overline{K}_{n}\}$-decomposition (see \cite{Alspach1989}). Also, for given three graphs  $G$, $H$, and $J$, $(G\wr H) \wr J=G\wr (H \wr J)$, that is, the wreath product is associative (see \textit{p.} 185 of \cite{ProductGraphs}). Note that, the above properties of the wreath product extend to digraphs.

Let $A$ be a finite additive group and let $S$ be a subset of $A$, where $S$ does not contain the identity of $A$. The Directed Cayley graph $\vv{X}(A ; S)$ on $A$ with connection set $S$ is digraph with $V(\vv{X}(A ; S))=A$ and $E(\vv{X}(A ; S))=\{(x,y):x,y\in A, y-x\in S\}$.

The following observation is useful to reduce the number of cases when $v$ is odd.

\begin{observation}\label{obs}
If $\mathrm{HWP}(v; m^{r}, n^{s})$ has a solution for some $r$ and $s$ and $v$ is odd, then $\mathrm{HWP}^{*}(v; m^{2r},$ $ n^{2s})$ has a solution for the same $r$ and $s$.
\end{observation}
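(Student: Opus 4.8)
The plan is to start from a given solution to $\mathrm{HWP}(v; m^r, n^s)$ and ``double'' every undirected cycle into a pair of oppositely oriented directed cycles. A solution to $\mathrm{HWP}(v; m^r, n^s)$ is a $\{C_m^r, C_n^s\}$-factorization of $K_v$, say into $C_m$-factors $F_1,\dots,F_r$ and $C_n$-factors $G_1,\dots,G_s$, where $r+s=\tfrac{v-1}{2}$ since $v$ is odd. First I would record the elementary fact that for a cycle $C_k$ with vertices $x_0,x_1,\dots,x_{k-1}$ in cyclic order, the symmetric digraph $C_k^*$ decomposes into the two directed $k$-cycles $(x_0,x_1,\dots,x_{k-1})$ and $(x_0,x_{k-1},\dots,x_1)$ (together these use each of the $2k$ arcs of $C_k^*$ exactly once). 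Consequently, if $F$ is any $C_k$-factor of a graph on vertex set $V$, then $F^*$ decomposes into two $\vv{C}_k$-factors on $V$.

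Next I would observe that $K_v^*$ inherits a decomposition from that of $K_v$: since $E(K_v)$ is the disjoint union of $E(F_1),\dots,E(F_r),E(G_1),\dots,E(G_s)$, the arc set $E(K_v^*)=\bigcup_{\{x,y\}\in E(K_v)}\{(x,y),(y,x)\}$ is the disjoint union of $E(F_1^*),\dots,E(F_r^*),E(G_1^*),\dots,E(G_s^*)$; that is, $K_v^* = F_1^*\oplus\cdots\oplus F_r^*\oplus G_1^*\oplus\cdots\oplus G_s^*$. Applying the previous paragraph to each summand splits each $F_i^*$ into two $\vv{C}_m$-factors and each $G_j^*$ into two $\vv{C}_n$-factors, producing a $\{\vv{C}_m^{2r},\vv{C}_n^{2s}\}$-factorization of $K_v^*$. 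The count is consistent: $2r+2s = 2\cdot\tfrac{v-1}{2} = v-1$, which is exactly the number of directed $2$-factors required by Lemma \ref{necessary}(3), so this is a solution to $\mathrm{HWP}^{*}(v; m^{2r}, n^{2s})$.

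I do not expect a genuine obstacle here; the only point needing care is the bookkeeping that makes the oddness of $v$ essential, namely that $K_v$ (rather than $K_v-I$) is the host graph of the undirected problem, so that $r+s=\tfrac{v-1}{2}$ and the doubled factorization has precisely $v-1$ factors. It is also worth flagging explicitly that this construction yields only those instances of $\mathrm{HWP}^{*}$ in which both exponents are even; the remaining parities (and the case of even $v$) are exactly what the separate constructions in Lemma \ref{mainlemma} and Lemma \ref{oddmainlemma} are designed to reach.
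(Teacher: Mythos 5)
Your proof is correct and follows essentially the same route as the paper: the paper's (one-sentence) argument likewise takes two copies of each undirected $2$-factor and replaces every edge $\{x,y\}$ by the two arcs $(x,y)$ and $(y,x)$, i.e.\ orients each cycle in both directions, exactly as you do. Your additional bookkeeping that $2r+2s=v-1$ and the role of $v$ being odd is consistent with the paper's setting.
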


A solution for HWP$^{*}(v; m^{2r}, n^{2s})$ is obtained from a solution of HWP$(v;\allowbreak m^{r}, n^{s})$ by taking two copies of each 2-factor and replacing each edge $\{x, y\}$ with the arcs $(x, y)$ and $(y, x)$ in the two 2-factors. 

Similarly, we get an $H^{*}$-factorization of $G^{*}$ from an $H$-factorization of $G$.

\begin{lemma}\label{lemma1.3}
Let $G$ be a graph and $H$ be a subgraph of $G$. If $G$ has an $H$-factorization then, $G^{*}$ has an $H^{*}$-factorization.
\end{lemma}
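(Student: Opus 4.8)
The plan is to push the given $H$-factorization of $G$ through the symmetrization operator $(\cdot)^{*}$ one factor at a time. Write the $H$-factorization of $G$ as $\{H_{1}, H_{2}, \ldots, H_{k}\}$, so that each $H_{i}$ is a spanning subgraph of $G$ with $H_{i}\cong H$, the edge sets $E(H_{i})$ are pairwise disjoint, and $\bigcup_{i=1}^{k} E(H_{i}) = E(G)$. For each $i$ I would form $H_{i}^{*}$, the symmetric digraph on vertex set $V(G)$ with arc set $\bigcup_{\{x,y\}\in E(H_{i})}\{(x,y),(y,x)\}$, and claim that $\{H_{1}^{*}, H_{2}^{*}, \ldots, H_{k}^{*}\}$ is the desired $H^{*}$-factorization of $G^{*}$.

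Three things then need to be checked, each following directly from the definitions. First, each $H_{i}^{*}$ is a spanning subdigraph of $G^{*}$ isomorphic to $H^{*}$: it is spanning because $H_{i}$ is spanning in $G$ and symmetrization does not change the vertex set, and any graph isomorphism $\varphi\colon H \to H_{i}$ is simultaneously a digraph isomorphism $H^{*}\to H_{i}^{*}$, since $\varphi$ carries each edge $\{x,y\}$ of $H$ to an edge of $H_{i}$ and hence the arc pair $\{(x,y),(y,x)\}$ to the arc pair $\{(\varphi x,\varphi y),(\varphi y,\varphi x)\}$, giving a bijection on arc sets. Second, the $H_{i}^{*}$ are pairwise arc-disjoint: if $(x,y)\in E(H_{i}^{*})\cap E(H_{j}^{*})$ then $\{x,y\}\in E(H_{i})\cap E(H_{j})$, contradicting the edge-disjointness of the factors $H_{i}$. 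Third, $\bigcup_{i=1}^{k} E(H_{i}^{*}) = E(G^{*})$: an arc $(x,y)$ lies in some $H_{i}^{*}$ iff $\{x,y\}\in E(H_{i})$ for some $i$, iff $\{x,y\}\in E(G)$ by $\bigcup_{i}E(H_{i})=E(G)$, iff $(x,y)\in E(G^{*})$ by the definition of $G^{*}$.

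Putting these together shows that $\{H_{1}^{*}, \ldots, H_{k}^{*}\}$ is a decomposition of $G^{*}$ into spanning subdigraphs each isomorphic to $H^{*}$, i.e.\ an $H^{*}$-factorization, which is exactly the assertion. There is no genuine obstacle here; the only point that warrants a line of care is that a graph isomorphism $H\to H_{i}$ really does induce a digraph isomorphism $H^{*}\to H_{i}^{*}$, which amounts to the remark that $(\cdot)^{*}$ behaves functorially and commutes with the $\oplus$ operation on edge-disjoint subgraphs. In fact the same argument proves the slightly more general statement — parallel to the one quoted in the excerpt for $\wr\,\overline{K}_{n}$ — that any $\{H_{1},\ldots,H_{k}\}$-decomposition of $G$ yields an $\{H_{1}^{*},\ldots,H_{k}^{*}\}$-decomposition of $G^{*}$.
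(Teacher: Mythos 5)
Your proof is correct and is exactly the argument the paper has in mind: the paper states this lemma without a formal proof, relying on the same observation (made just before, in connection with Observation \ref{obs}) that replacing each edge $\{x,y\}$ of every factor by the arc pair $(x,y),(y,x)$ turns each $H$-factor into an $H^{*}$-factor, and your write-up simply verifies the spanning, isomorphism, arc-disjointness, and covering conditions in detail.
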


The following lemma and theorem will be used in the solutions of even and odd cases of HWP$^{*}(v; m^{r}, n^{s})$, respectively.

\begin{lemma}\cite{Sajna2014} \label{lemma2}
 Let $m \geq 4$ be an even integer and $x$ be a positive integer. Then $K_{(\frac{mx}{2}:2)}^{*}$ has a $\vv{C}_{m}$-factorization.
\end{lemma}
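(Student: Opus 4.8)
The statement to prove is Lemma \ref{lemma2}: for even $m \geq 4$ and a positive integer $x$, the complete symmetric equipartite digraph $K_{(\frac{mx}{2}:2)}^{*}$ admits a $\vv{C}_{m}$-factorization. Note that this is attributed to Burgess and Sajna \cite{Sajna2014}, so the task is really to reconstruct the idea of their argument. The plan is to exploit the fact that $K_{(\frac{mx}{2}:2)}^{*}$ has exactly two parts, so its arc set is simply a complete symmetric bipartite digraph $K_{\frac{mx}{2},\frac{mx}{2}}^{*}$ between the two parts; there are no arcs within a part. Writing $k = \frac{mx}{2}$, we want to decompose $K_{k,k}^{*}$ into directed $m$-cycles arranged into spanning $\vv{C}_m$-factors.

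First I would set up coordinates: label the two parts $A = \{a_0, \dots, a_{k-1}\}$ and $B = \{b_0, \dots, b_{k-1}\}$ with indices in $\mathbb{Z}_k$. The natural approach is a difference/rotation construction: for each difference $d \in \mathbb{Z}_k$, the arcs $(a_i, b_{i+d})$ for $i \in \mathbb{Z}_k$ form a perfect matching from $A$ to $B$, and the arcs $(b_i, a_{i+d})$ form a perfect matching from $B$ to $A$; together, pairing a forward matching with difference $d$ and a backward matching with difference $e$, one obtains a union of directed cycles all of the same length $k/\gcd(k, d+e)$ (a standard fact about unions of two perfect matchings on a bipartite vertex set, now oriented consistently so that the cycles are directed). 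To get directed $m$-cycles of length exactly $m$ we need $\gcd(k, d+e) = k/(m/?)$ — more precisely, since each such 2-regular directed spanning subdigraph uses $2k$ arcs and is a disjoint union of directed cycles of a common length $\ell$, we need $\ell = m$, i.e. we need the sum $d+e$ to have the right order in $\mathbb{Z}_k$. Because $m \mid 2k$ (indeed $2k = mx$), such pairs $(d,e)$ exist, and I would organize the $k$ forward matchings and $k$ backward matchings into $k$ pairs so that every pair yields a $\vv{C}_m$-factor. The total arc count checks out: $K_{k,k}^{*}$ has $2k^2$ arcs, each $\vv{C}_m$-factor has $2k$ arcs, so we need exactly $k$ factors, matching the $k$ available differences in each direction.

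The key technical step — and the main obstacle — is to choose the pairing of forward differences with backward differences so that simultaneously (i) every forward difference $d$ is used exactly once, (ii) every backward difference $e$ is used exactly once, and (iii) each resulting pair $(d,e)$ gives directed cycles of length exactly $m$ rather than some other divisor of $k$. Condition (iii) requires $d + e$ to lie in the unique order-$m$... wait, more carefully: the directed cycle through $a_i$ visits $a_i, b_{i+d}, a_{i+d-e}, \dots$, so it closes up after $\ell$ steps where $\ell$ is the order of $d - e$ in $\mathbb{Z}_k$ (with a sign convention to be pinned down); we need every chosen pair to have $d-e$ of order exactly $m$. Since there are $k = mx/2$ elements of order dividing... one must count: the elements of $\mathbb{Z}_k$ of order exactly $m$ number $\varphi(m) \cdot (\text{something})$, which in general is fewer than $k$, so a naive "constant gap" pairing will not always work, and one instead needs a more clever combinatorial assignment (e.g. splitting $\mathbb{Z}_k$ into cosets of the order-$m$ subgroup and handling each coset). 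I would handle this by treating $\mathbb{Z}_k$ additively, noting $m \mid k$ is equivalent to... actually $m$ need not divide $k$ (e.g. $m=4$, $x=1$, $k=2$), so the small cases where $k < m$ must be treated separately — there one cannot have an order-$m$ element and instead the bipartite structure forces cycles that wrap around both parts, and the argument genuinely uses that the two parts together give $2k \geq m$ vertices.

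Given that this is a cited result, in the actual paper I would simply invoke \cite{Sajna2014} rather than reproduce the full construction; the paragraphs above sketch how one would verify it from scratch if needed, and the genuinely delicate point is the simultaneous Latin-square-like pairing of the $k$ forward and $k$ backward matchings into $k$ directed-$m$-cycle factors, which is exactly the kind of explicit difference-method bookkeeping that Burgess and Sajna carry out in detail.
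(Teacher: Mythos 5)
The paper itself offers no proof of this lemma; it is imported verbatim from Burgess and \v{S}ajna \cite{Sajna2014}, so the only question is whether your sketch would stand on its own, and as written it does not: you explicitly leave the decisive step (how to pair the forward and backward matchings) unresolved and fall back on citing the source. Moreover, the reason you find that step delicate is a factor-of-two slip in the cycle-length computation. With both parts indexed by $\mathbb{Z}_k$, $k=\frac{mx}{2}$, the union of the forward matching $M_d^{+}=\{(a_i,b_{i+d}):i\in\mathbb{Z}_k\}$ and the backward matching $M_e^{-}=\{(b_j,a_{j+e}):j\in\mathbb{Z}_k\}$ advances the index of the $A$-vertices by $d+e$ after every two arcs, so its directed cycles have length $2\cdot\mathrm{ord}_{\mathbb{Z}_k}(d+e)$, not $\mathrm{ord}_{\mathbb{Z}_k}(d\pm e)$ as you assert. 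Hence the requirement is that $d+e$ have order $m/2$, not order $m$; and since $m/2$ divides $k=\frac{m}{2}x$, an element of order $m/2$ always exists (for instance $s=x$), so your worry that ``$m$ need not divide $k$'' and that small cases such as $m=4,x=1$ need separate treatment is an artifact of the error, not a real obstruction.

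Once the target order is corrected, the ``main obstacle'' you flag disappears: fix a single $s\in\mathbb{Z}_k$ of order $m/2$ and pair $M_d^{+}$ with $M_{s-d}^{-}$ for every $d\in\mathbb{Z}_k$. Since $d\mapsto s-d$ is a bijection of $\mathbb{Z}_k$, each of the $k$ forward and $k$ backward matchings is used exactly once, and every pair is a spanning subdigraph whose directed cycles all have length $2\cdot\mathrm{ord}_{\mathbb{Z}_k}(s)=m$; this gives the required $k$ arc-disjoint $\vv{C}_{m}$-factors of $K_{(\frac{mx}{2}:2)}^{*}$, and the arc count $k\cdot 2k=2k^{2}$ confirms that all arcs are used. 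Note that nothing forces the sums $d+\sigma(d)$ to be distinct, which is the hidden assumption behind your count of elements of a given order and your appeal to a ``Latin-square-like'' assignment. So the bipartite difference approach you chose is perfectly workable, but the proposal as it stands has a genuine gap: the wrong order condition, and the missing (in fact easy) pairing argument that it obscures.
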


\begin{theorem}\cite{Liu2003} \label{liu}
The complete equipartite graph $K_{(x: y)}$ has a $C_{m}$-fac\-tor\-i\-za\-tion for $m \geq 3$ and $x \geq 2$ if and only if $m\vert xy$, $x(y-1)$ is even, $m$ is even if $y=2$ and $(x, y, m) \neq(2,3,3),(6,3,3),(2,6,3),(6,2,6)$.
\end{theorem}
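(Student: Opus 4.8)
The plan is to prove the two directions separately; necessity is a short counting argument and the converse is the substantial part.

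\emph{Necessity.} Suppose $K_{(x:y)}$ admits a $C_m$-factorization. Each factor is a spanning disjoint union of $m$-cycles, so $m\mid xy$. Each factor is $2$-regular whereas $K_{(x:y)}$ is $x(y-1)$-regular, so the number of factors equals $x(y-1)/2$, and in particular $x(y-1)$ is even. If $y=2$ then $K_{(x:2)}\cong K_{x,x}$ is bipartite and contains no odd cycle, forcing $m$ to be even. Finally, for each of $(x,y,m)\in\{(2,3,3),(6,3,3),(2,6,3),(6,2,6)\}$ one checks directly that no factorization exists: the first three are the classical non-existent resolvable group-divisible designs of block size $3$ with uniform groups, and $K_{6,6}$ is the well-known graph admitting no $C_6$-factorization.

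\emph{Sufficiency.} Here I would assemble $K_{(x:y)}$ from two kinds of blocks. Decompose $K_y$ into $\tfrac{y-1}{2}$ Hamilton cycles if $y$ is odd, and into $\tfrac{y-2}{2}$ Hamilton cycles together with a perfect matching $I$ if $y$ is even. Since $K_{(x:y)}\cong K_y\wr\overline{K}_x$, the wreath-product fact recalled in the preliminaries lifts this to a decomposition of $K_{(x:y)}$ into copies of $C_y\wr\overline{K}_x$, together with $\tfrac{y}{2}$ vertex-disjoint copies of $K_{x,x}$ (namely $I\wr\overline{K}_x$) when $y$ is even. A $C_m$-factorization of $K_{(x:y)}$ can then be built provided: (a) $C_y\wr\overline{K}_x$ has a $C_m$-factorization; and (b) $\tfrac{y}{2}$ disjoint copies of $K_{x,x}$ admit a $C_m$-factorization whose factors are amalgamated across the pieces into spanning $C_m$-factors of $K_{(x:y)}$. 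Part (b) rests on the spectrum of $C_m$-factorizations of $K_{x,x}$ ($m$ even, $m\mid 2x$, sole exception $m=x=6$). For part (a) the engine is the classical cycle blow-up lemma: $C_n\wr\overline{K}_s$ decomposes into $s$ Hamilton cycles, and more generally into $C_t$-factors for each admissible $t\mid ns$. This allows one to pass between a $C_{m'}$-factorization of $C_y\wr\overline{K}_{x'}$ and a $C_m$-factorization of a larger blow-up, so the generic range of $(x,y,m)$ is cleared by induction, peeling off one Hamilton cycle of $K_y$ at a time.

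\emph{Main obstacle.} As in every Oberwolfach-type result, the real work lies at the boundary of the parameter space: the four sporadic triples, the transition cases where $m$ is close to $xy$ so each factor is a single long cycle, and the parity bookkeeping needed to guarantee that the blown-up cycle lengths divide $xy$ and respect bipartiteness when $y=2$. These demand a finite but delicate list of explicit base constructions---typically short circulant factorizations on a handful of vertices---after which the recursive machinery closes the gap. Since this is precisely the theorem of Liu \cite{Liu2003}, in the present paper it is used only as a black box in the odd-cycle constructions, so we do not reproduce the details.
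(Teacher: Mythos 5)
This statement is not proved in the paper at all: it is Liu's theorem, imported verbatim with the citation \cite{Liu2003} and used purely as a black box in the odd-cycle constructions (Lemma \ref{oddmainlemma} and Theorem \ref{thm4.3}). So there is no in-paper argument to compare against, and your closing instinct --- that citing Liu suffices here --- matches exactly what the authors do. Your necessity paragraph is fine as far as it goes (the divisibility, the count $x(y-1)/2$ of factors, bipartiteness for $y=2$), although the nonexistence of the four sporadic cases is asserted rather than checked, and these are genuinely nontrivial facts (e.g.\ $(2,6,3)$ is the nonexistence of a resolvable group-divisible design, not a one-line verification).

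The sufficiency sketch, however, does not stand up as a proof outline. Your ``engine'' --- that $C_n\wr\overline{K}_s$ has a $C_t$-factorization for every admissible $t\mid ns$ --- is false as stated, and it fails precisely at the exceptional triples the theorem excludes: $C_3\wr\overline{K}_2\cong K_{(2:3)}$ has no $C_3$-factorization, which is the exception $(2,3,3)$. Establishing which $C_t$-factorizations of $C_n\wr\overline{K}_s$ exist is essentially the hard content of Liu's two papers, so invoking it begs the question. The even-$y$ reduction also breaks for long cycles: when $m>2x$, the leftover piece $I\wr\overline{K}_x$ (disjoint copies of $K_{x,x}$) cannot carry $C_m$-factors at all, since each copy has only $2x$ vertices, yet the theorem permits such $m$ (any $m\mid xy$); your ``amalgamation'' step has no mechanism for stitching cycles across the disjoint copies because there are no edges between them. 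So while the necessity direction and the decision to treat the result as known are both sound, the sufficiency portion is a plan whose two key steps are, respectively, equivalent to the theorem itself and invalid in part of the parameter range; for the purposes of this paper the correct move is simply to cite \cite{Liu2003}, as the authors do.
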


\section{Even Cycle Sizes}
We will make use of the following lemma in the first main construction of this paper.

\begin{lemma}\label{K_xWrK2}
$K_{x}^{*} \wr \overline{K}_{2}$ has a $K_{2}^{*}$-factorization for every integer $x\geq 2$. 
\end{lemma}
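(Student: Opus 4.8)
The plan is to decompose $K_x^* \wr \overline{K}_2$ into a $K_2^*$-factorization, where a $K_2^*$-factor is a perfect matching of the underlying graph in which each matching edge $\{a,b\}$ is realized by both arcs $(a,b)$ and $(b,a)$. Note that $K_x^* \wr \overline{K}_2$ has $2x$ vertices, and each vertex has in-degree and out-degree $2(x-1)$; a $K_2^*$-factor uses out-degree $1$ and in-degree $1$ at each vertex, so we expect exactly $2(x-1)$ factors. The cleanest route is to pass through the undirected world: by Lemma~\ref{lemma1.3}, it suffices to show that the underlying graph $K_x \wr \overline{K}_2 \cong K_{(2:x)}$, the complete equipartite graph with $x$ parts of size $2$, has a $K_2$-factorization, i.e.\ a proper $1$-factorization (decomposition into perfect matchings). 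Indeed, if $K_{(2:x)}$ has a $K_2$-factorization, then applying Lemma~\ref{lemma1.3} with $G = K_{(2:x)}$ and $H = K_2$ gives that $(K_{(2:x)})^* \cong K_x^* \wr \overline{K}_2$ has a $K_2^*$-factorization, which is exactly what we want.

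So the task reduces to: \emph{$K_{(2:x)}$ has a $1$-factorization for every $x \ge 2$}. This is classical. First, $K_{(2:x)} = K_{2x} - I$, where $I$ is the perfect matching consisting of the $x$ ``partite pairs.'' It is a standard fact that $K_{2x}$ has a $1$-factorization into $2x-1$ perfect matchings (the $GK_{2x}$ / round-robin construction), and moreover one can arrange this $1$-factorization so that $I$ is one of its factors — for instance, take the vertex set $\mathbb{Z}_{2x-1} \cup \{\infty\}$ with the usual rotational $1$-factorization; one of the $2x-1$ near-one-factors-plus-$\infty$-edge patterns can be designated as $I$ after relabelling, or more directly one simply observes any $1$-factorization of $K_{2x}$ can be rotated/relabelled so a prescribed perfect matching is a factor. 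Removing that factor $I$ leaves a decomposition of $K_{2x} - I = K_{(2:x)}$ into the remaining $2x-2 = 2(x-1)$ perfect matchings, which is the desired $K_2$-factorization.

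The main thing to be careful about — the only real ``obstacle'' — is ensuring that the partite-pair matching $I$ genuinely appears as a factor in some $1$-factorization of $K_{2x}$; one cannot simply delete an arbitrary perfect matching from an arbitrary $1$-factorization. The fix is routine: since the automorphism group of $K_{2x}$ acts transitively on perfect matchings, and $K_{2x}$ is known to be $1$-factorable, pick any $1$-factorization $\{F_1,\dots,F_{2x-1}\}$, then apply a permutation of the vertices carrying $F_1$ onto $I$; this permutation carries the whole $1$-factorization to one containing $I$. (Alternatively, for small $x$ one could just invoke Theorem~\ref{liu} with $(x,y,m) = (2, x, 2)$, which directly asserts a $C_2$-type / $K_2$-factorization of $K_{(2:x)}$ when $2 \mid 2x$, $2(x-1)$ is even, and $m=2$ is even — all automatic — giving the result with no exceptions for $x \ge 2$.) Either way, combining the $1$-factorization of $K_{(2:x)}$ with Lemma~\ref{lemma1.3} completes the proof.
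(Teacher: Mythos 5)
Your proof is correct and follows essentially the same route as the paper: identify $K_x^*\wr\overline{K}_2$ with $K_{2x}^*$ minus the $x$ partite digons, take a $1$-factorization of $K_{2x}$ (Kotzig's standard one, relabelled if necessary so that the partite-pair matching $I$ is one of its factors), delete $I$, and lift the remaining $2x-2$ perfect matchings to $K_2^*$-factors via Lemma~\ref{lemma1.3}. The only caveat is your parenthetical alternative via Theorem~\ref{liu}, which does not apply since that theorem requires cycle length $m\geq 3$; but since your main argument stands on its own, this does not affect correctness.
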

\begin{proof}
Notice that $K_{x}^{*} \wr \overline{K}_{2}\cong K_{2x}^{*} -xK_{2}^{*}$. Using Kotzig's $1$-factorization of $K_{2x}$ and Lemma \ref{lemma1.3}, a decomposition of $K_{x}^{*} \wr \overline{K}_{2}$ into $2x-2$ $K_{2}^{*}$-factors is obtained.
\end{proof}

Here we give the main construction that is used to obtain solutions for the even cycle size cases.

\begin{lemma}\label{mainlemma}
Let $m\geq 4$ and $n\geq 4$ be even and $h=lcm(m,n)$. If $\mathrm{HWP}^{*}(h; m^{r'}, n^{s'})$ has a solution for all nonnegative integers $r'$, $s'$ satisfying $r'+s'=h-1$, then there is a solution to $\mathrm{HWP}^{*}(hx; m^r, n^s)$ for all nonnegative integers $r$, $s$, and $x$ with $r+s=hx-1$.
\end{lemma}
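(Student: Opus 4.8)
The plan is to decompose $K_{hx}^*$ into pieces that can each be handled by known results, using the wreath product structure. Write the vertex set of $K_{hx}^*$ as $x$ groups of size $h$. Then $K_{hx}^* \cong (K_x^* \wr \overline{K}_h) \oplus (x K_h^*)$: the "within-group" arcs form $x$ disjoint copies of $K_h^*$, and the "between-group" arcs form $K_x^* \wr \overline{K}_h$. The hypothesis handles the within-group part: each of the $x$ copies of $K_h^*$ has a $\{\vv{C}_m^{r'}, \vv{C}_n^{s'}\}$-factorization for every admissible split $r'+s' = h-1$, and doing this simultaneously in all $x$ copies (with the \emph{same} split) produces $r'$ $\vv{C}_m$-factors and $s'$ $\vv{C}_n$-factors of $K_{hx}^*$.

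The between-group part $K_x^* \wr \overline{K}_h$ must be split into $\vv{C}_m$-factors and $\vv{C}_n$-factors, and we need full flexibility in how many of each. First I would write $\overline{K}_h \cong \overline{K}_{h/2} \wr \overline{K}_2$ (or handle the even factorization of $h$ directly), and use associativity of the wreath product to get $K_x^* \wr \overline{K}_h \cong (K_x^* \wr \overline{K}_2) \wr \overline{K}_{h/2}$. By Lemma~\ref{K_xWrK2}, $K_x^* \wr \overline{K}_2$ has a $K_2^*$-factorization into $2x-2$ factors; applying $\wr\,\overline{K}_{h/2}$ to each factor and using the decomposition-lifting property of the wreath product, $K_x^* \wr \overline{K}_h$ decomposes into $2x-2$ factors each isomorphic to $K_2^* \wr \overline{K}_{h/2} \cong K_{(h/2\,:\,2)}^*$. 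Now each such $K_{(h/2\,:\,2)}^*$ is a complete symmetric equipartite digraph with two parts of size $h/2$; since $m$ and $n$ both divide $h = \mathrm{lcm}(m,n)$, we have $h/2 = (m x'/2)$-type sizes, so Lemma~\ref{lemma2} applies with the appropriate parameter to give each of these $2x-2$ factors a $\vv{C}_m$-factorization, and equally a $\vv{C}_n$-factorization. Thus the between-group part yields, for any $0 \le a \le 2x-2$, a total of $a\,(\text{number of }\vv{C}_m\text{-factors per }K_{(h/2:2)}^*)$ $\vv{C}_m$-factors and the rest $\vv{C}_n$-factors.

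Finally I would count factors and check that every admissible $(r,s)$ with $r+s = hx-1$ is realizable. The within-group part contributes $h-1$ factors with a freely chosen split; the between-group part contributes $(2x-2)$ blocks of $K_{(h/2:2)}^*$, and $K_{(h/2:2)}^*$ has exactly $h-1$ factors in any of its $\vv{C}_m$- or $\vv{C}_n$-factorizations (since a $2$-factorization of a digraph on $h$ vertices with in/out-degree... more precisely $K_{(h/2:2)}^*$ is $(h-1)$-in- and out-regular... wait, it has $2(h/2)(h/2-1)+2(h/2)^2$? — I would just note each 2-factorization of $K_{(h/2:2)}^*$ has $h-2$ factors, since that digraph is $(h-2)$-diregular, and reconcile the total: $(h-1) + (2x-2)(h-2)$. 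That does \emph{not} equal $hx-1$ in general, so the split of $\overline{K}_h$ must instead be chosen so the arithmetic closes — e.g. writing $K_{hx}^*$ directly as $K_x^* \wr \overline{K}_h$ plus $x$ copies of $K_h^*$ and matching degrees: $hx-1 = (x-1)h \;/\!\!/\; \text{per-factor count} + (h-1)$ forces the between part to be $(x-1)h$ arcs of diregularity contributing $(x-1)h / (\text{factor degree}) \cdot (\text{factors per block})$... The delicate bookkeeping of exactly how many $\vv{C}_m$- and $\vv{C}_n$-factors each block supplies, and verifying that summing over blocks plus the within-group contribution hits \emph{every} value $r$ in $[0, hx-1]$, is the main obstacle — everything else is a routine assembly of Lemmas~\ref{K_xWrK2}, \ref{lemma2}, and the hypothesis via the wreath product. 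I would organize the count so that each between-group block contributes $h-2$ factors and the within-group contributes $h-1$, giving $(2x-2)(h-2)+(h-1)$; if this fails to equal $hx-1$ I instead split the $h$ vertices of each group as one group of $2$ wreathed appropriately so that the between-block size is $h$ rather than $h/2$, making each block $K_{(h/2:2)}^*$ into $K_{(?:?)}^*$ with exactly $h$-per-... — in any case the resolution is to choose the equipartition of the between-group graph so the per-block factor count is exactly $h/(2x-2)\cdot$ something — concretely, reorganize as $\overline{K}_x \wr K_h^* $ for the within part giving $x(h-1)$... no: the clean statement is that the construction gives $(h-1) + (2x-2) \cdot \tfrac{h}{2}$-ish and I must reconcile to $hx - 1$; I expect the honest version replaces "$2x-2$ blocks of $K_2^*$" by "$x-1$ blocks realized as $K_{(h : x)}^*$-type pieces" and uses Lemma~\ref{lemma2} once per block, each block then supplying exactly $h$ factors except one supplying $h-1$. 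This degree/count reconciliation is where I would spend the real effort.
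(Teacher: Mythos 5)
Your setup is exactly the paper's: decompose $K_{hx}^{*}$ as $xK_h^{*}\oplus(K_x^{*}\wr\overline{K}_h)$, handle the $x$ copies of $K_h^{*}$ simultaneously by the hypothesis, and use Lemma~\ref{K_xWrK2} with associativity of the wreath product to break $K_x^{*}\wr\overline{K}_h$ into $2x-2$ spanning factors whose components are copies of $K_{(\frac{h}{2}:2)}^{*}$, each of which is given a $\vv{C}_m$- or $\vv{C}_n$-factorization by Lemma~\ref{lemma2}. The genuine gap is in the counting step, and it stems from a concrete error: you assert that $K_{(\frac{h}{2}:2)}^{*}$ is $(h-2)$-diregular, but in a complete symmetric equipartite digraph with two parts of size $\frac{h}{2}$ every vertex has in- and out-degree exactly $\frac{h}{2}$ (arcs go only to the other part). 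Consequently each of the $2x-2$ blocks contributes exactly $\frac{h}{2}$ directed $2$-factors, not $h-2$, and the arithmetic closes with no modification of the construction: $(h-1)+(2x-2)\cdot\frac{h}{2}=h-1+h(x-1)=hx-1$. All of your subsequent attempts to ``reconcile'' the count by changing the equipartition or the block structure are unnecessary and do not lead anywhere correct.

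The second piece you leave unfinished — that every $r\in[0,hx-1]$ is actually attainable — is routine once the correct per-block count $\frac{h}{2}$ is in hand: choosing $a\in[0,2x-2]$ blocks to be $\vv{C}_m$-factorized and the split $r'$ inside the $K_h^{*}$-factor gives $r=r'+\frac{h}{2}a$, and since $r'$ ranges over an interval of length $h>\frac{h}{2}$, the intervals $[\frac{h}{2}a,\frac{h}{2}a+h-1]$ for consecutive $a$ overlap and their union covers $[0,hx-1]$. So the structure of your argument matches the paper's proof, but as written the proposal does not establish the lemma: the central degree computation is wrong, the factor count is therefore not verified, and the realizability of all $(r,s)$ is left as an acknowledged hole.
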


\begin{proof}
We can decompose $K_{hx}^*$ as follows:
\begin{eqnarray}\label{3.1}
 K_{hx}^{*} & \cong & %\left(\overline{K}_{x} \wr K_h^{*}\right) 
 xK_h^{*}\oplus \left(K_{x}^{*} \wr \overline{K}_{h}\right)  
 % \\ &\cong & 2xK_{\frac{h}{2}}^{*} \oplus K_{(\frac{h}{2}:2x)}^{*}. 
 \label{eq1}
\end{eqnarray}
Since $\overline{K}_{h}\cong \overline{K}_{2}\wr \overline{K}_{\frac{h}{2}}$, $K_{x}^{*} \wr \overline{K}_{h}$  is isomorphic to $(K_{x}^{*} \wr \overline{K}_{2})\wr \overline{K}_{\frac{h}{2}}$ by the associativity of the wreath product.
Thus, by Lemma \ref{K_xWrK2},
$K_{x}^{*} \wr \overline{K}_{h}$ can be decomposed into factors each isomorphic to
$K_{2}^{*} \wr \overline{K}_{\frac{h}{2}}$, and since $K_{2}^{*} \wr \overline{K}_{\frac{h}{2}}\cong K_{(\frac{h}{2}:2)}^{*}$, we have a decomposition of $K_{x}^{*} \wr \overline{K}_{h}$ into $2x-2$ $K_{(\frac{h}{2}:2)}^{*}$-factors. 

Now, let $F_0$ be the $K_{h}^{*}$-factor and $F_1,F_2,\dots,F_{2x-2}$ be the $K_{(\frac{h}{2}:2)}^*$-factors of $K_{hx}^{*}$. Since HWP$^{*}(h; m^{r'},n^{s'})$ is assumed to have 
a solution for all nonnegative integers $r'$ and $s'$, $F_0$ has a $\{\vv{C}_{m}^{r'},\vv{C}_{n}^{s'}\}$-factorization for all nonnegative integers $r'$ and $s'$ where $r'+s'=h-1$. Also, by Lemma \ref{lemma2} $K_{(\frac{h}{2}:2)}^*$ has a $\vv{C}_{m}$- and a $\vv{C}_{n}$-factorization for $m, n\geq 4$, so each $F_j$ has a $\{\vv{C}_{m}^{\frac{h}{2}r_j},\vv{C}_{n}^{\frac{h}{2}s_j}\}$-factorization for $j\in \{1,2, \dots, 2x-2\}$, where $r_j, s_j \in \{0, 1\}$ with $r_j+s_j=1$. Those factorizations give us a $\{\vv{C}_{m}^{r},\vv{C}_{n}^{s}\}$-factorization of $K_{hx}^{*}$ where $r=r'+\frac{h}{2}\sum_{j=1}^{2x-2} r_j$ and $s=s'+\frac{h}{2}\sum_{j=1}^{2x-2} s_j$ with $r+s=r'+s'+\frac{h}{2}\sum_{j=1}^{2x-2} (r_j+ s_j)=h-1+\frac{h}{2}(2x-2)=hx-1$. 

Since any nonnegative integer $0\leq r\leq hx-1$ can be written as $r=r'+\frac{h}{2}a$ for integers $0\leq r'\leq h-1$, $0\leq a\leq 2x-2$ and even $h$, a solution to $\mathrm{HWP}^{*}(hx; m^r, n^s)$ exists for each $r \geq0$ and $s \geq0$ satisfying $r+s=hx-1$.
\end{proof}

\begin{lemma}\label{new2}
For every even integer $m\geq2$, $\vv{C}_{m} \wr \overline{K}_{2}$ has a $\vv{C}_{m}$- or $\vv{C}_{2m}$-factorization.
\end{lemma}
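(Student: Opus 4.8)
The plan is to put explicit coordinates on $\vv{C}_{m} \wr \overline{K}_{2}$ and split its arc set according to the ``type'' of the jump between the two copies. Label the vertices of the base cycle $\vv{C}_{m}$ by $0,1,\dots,m-1$ (indices read modulo $m$), and write each vertex of $\vv{C}_{m} \wr \overline{K}_{2}$ as a pair $(i,a)$ with $i\in\{0,\dots,m-1\}$ and $a\in\{0,1\}$, where $(i,0)$ and $(i,1)$ are the two vertices of the copy of $\overline{K}_{2}$ replacing vertex $i$. By the definition of the wreath product of digraphs the arc set is exactly $\{\, ((i,a),(i+1,b)) : 0\le i\le m-1,\ a,b\in\{0,1\}\,\}$; in particular every vertex has in-degree $2$ and out-degree $2$, so any partition of the arcs into two spanning subdigraphs, each of which is a disjoint union of directed cycles, is automatically a $2$-factorization of the required kind.

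First I would partition the arcs according to $b-a \bmod 2$: let $F_{0}$ consist of all ``level-preserving'' arcs $((i,a),(i+1,a))$ and let $F_{1}$ consist of all ``level-swapping'' arcs $((i,a),(i+1,a+1))$, with $a+1$ read modulo $2$. These are arc-disjoint, together cover every arc, each is spanning, and in each of them every vertex has in-degree $1$ and out-degree $1$; hence $\{F_{0},F_{1}\}$ decomposes $\vv{C}_{m} \wr \overline{K}_{2}$ into two factors that are disjoint unions of directed cycles, and it only remains to pin down the cycle lengths.

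Next I would trace the cycles. In $F_{0}$ the unique out-neighbour of $(i,a)$ is $(i+1,a)$, so the orbit of $(0,a)$ closes up after exactly $m$ steps; thus $F_{0}$ is the vertex-disjoint union of the directed $m$-cycle on $\{(i,0):0\le i\le m-1\}$ and the directed $m$-cycle on $\{(i,1):0\le i\le m-1\}$, i.e.\ a $\vv{C}_{m}$-factor. In $F_{1}$ the unique out-neighbour of $(i,a)$ is $(i+1,a+1)$, so after $k$ steps from $(0,a)$ one is at $(k \bmod m,\ (a+k)\bmod 2)$; this first equals $(0,a)$ when $k$ is the least positive integer that is both a multiple of $m$ and even, namely $k=m$, precisely because $m$ is even. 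Hence $F_{1}$ is also a vertex-disjoint union of two directed $m$-cycles, i.e.\ a $\vv{C}_{m}$-factor. Therefore $\{F_{0},F_{1}\}$ is a $\vv{C}_{m}$-factorization of $\vv{C}_{m} \wr \overline{K}_{2}$, which is in particular a $\vv{C}_{m}$- or $\vv{C}_{2m}$-factorization.

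I do not expect any step here to be a real obstacle; evenness of $m$ enters only in the observation that the cycles of $F_{1}$ have length $m$ rather than $2m$ (for odd $m$ the same $F_{1}$ would be a single directed $2m$-cycle). Should one prefer to display the $\vv{C}_{2m}$ alternative instead, the same coordinates make it transparent: take the Hamilton directed cycle that traverses level $0$, crosses to level $1$, traverses level $1$, and crosses back, and check that its complementary subdigraph, again of in- and out-degree $1$, is a single directed $2m$-cycle when $m$ is even.
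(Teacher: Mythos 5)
Your $\vv{C}_m$-factorization is exactly the paper's: splitting the arcs by jump type gives the paper's two factors (the level-preserving arcs forming two copies of the base $m$-cycle, and the level-alternating arcs), and evenness of $m$ enters in the same place, to see that the alternating factor closes up after $m$ rather than $2m$ steps. The substantive issue is your reading of the ``or''. As Lemma \ref{new2} is applied in the proof of Lemma \ref{lemma6}, one must be free to choose, independently for each copy of $\vv{C}_{8}\wr\overline{K}_{2}$, \emph{either} a $\vv{C}_{m}$-factorization \emph{or} a $\vv{C}_{2m}$-factorization, so both factorizations have to be exhibited; the $\vv{C}_{2m}$ case cannot be left as an optional closing ``check that''. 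Fortunately your sketch is correct and should simply be promoted into the proof: the Hamilton directed cycle $H$ that runs along level $0$, crosses at the last column, runs along level $1$ and crosses back is one directed $2m$-cycle, and its complementary arcs (level-swapping arcs at the first $m-1$ columns, level-preserving arcs at the last column) form a second directed $2m$-cycle precisely because $m-1$ is odd, i.e.\ because $m$ is even. It is worth noting that your choice of the \emph{complement} as the second $2m$-factor is the right one and is actually more careful than the paper's construction: the paper takes $F_4=\vv{C}_{(3)}+(1,0)$, but translation by $(1,0)$ fixes every directed $2$-factor of $\vv{C}_{m}\wr\overline{K}_{2}$ (at each column the two outgoing arcs of such a factor are either both level-preserving or both level-swapping, and each such pair is invariant under the level swap), so the translate of a Hamilton directed cycle is that same cycle and the intended arc-disjoint partner must indeed be the complement, exactly as in your final remark.
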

\begin{proof}
Let $m \geq 2$ be an integer. We can represent $\vv{C}_{m} \wr \overline{K}_{2}$ as $\vv{X}\big(\mathbb{Z}_2\mathbb{\times Z}_m ; S_1\big)$, the directed Cayley graph over $\mathbb{Z}_2\mathbb{\times Z}_m$ with the connection set $S_1=\{(0,1),$ $(1,1)\}$. Let $\vv{C}_{(1)}=(v_0,v_1, \dots,\allowbreak v_{m-1})$ be a cycle of $\vv{C}_{m} \wr \overline{K}_{2}$, where $v_i=(0,i)$ for $0\leq i\leq m-1$, and it can be checked that $F_1=\vv{C}_{(1)}\cup (\vv{C}_{(1)}+(1,0))$ is a directed $m$-cycle factor of $\vv{C}_{m} \wr \overline{K}_{2}$. Also, let $\vv{C}_{(2)}=(u_0,u_1, \dots,u_{m-1})$ be a cycle of $\vv{C}_{m} \wr \overline{K}_{2}$, where 
$$ u_{i} =
\begin{cases}
      (0,i) & if \,\   i \,\ is \,\  even\\
      (1,i) & if \,\  i \,\  is \,\  odd
\end{cases}$$
for $0\leq i\leq m-1$. It can be checked that $F_2=\vv{C}_{(2)}\cup (\vv{C}_{(2)}+(1,0))$ is a directed $m$-cycle factor of $\vv{C}_{m} \wr \overline{K}_{2}$. $F_1$ and $F_2$ are arc disjoint directed $m$-cycle factors of $\vv{C}_{m} \wr \overline{K}_{2}$. Thus $\{F_1,F_2\}$ is a $\vv{C}_{m}$-factorization of $\vv{C}_{m} \wr \overline{K}_{2}$.

Let $\vv{C}_{(3)}=(v_0,v_1, \dots,\allowbreak v_{2m-1})$ be a cycle of $\vv{C}_{m} \wr \overline{K}_{2}$, where 
$$ v_i =
\begin{cases}
      (0,i) & if \,\  0\leq i\leq m-1\\
      (1,i) & if \,\  m-2\leq i\leq 2m-1
\end{cases}$$
and it can be checked that $F_3=\vv{C}_{(3)}$ and $F_4=\vv{C}_{(3)}+(1,0)$ are arc disjoint directed $2m$-cycle factor of $\vv{C}_{m} \wr \overline{K}_{2}$. Thus $\{F_3,F_4\}$ is a $\vv{C}_{2m}$-factorization of $\vv{C}_{m} \wr \overline{K}_{2}$.
\end{proof}

For $m\geq 2$, we can represent $(\vv{C}_{m} \wr \overline{K}_{2}) \oplus mK_{2}^{*}$ as the directed Cayley graph over $\mathbb{Z}_2\mathbb{\times Z}_m$ with the connection set $S_2=\{(0,1),(1,0),(1,1)\}$  where $K_{2}^{*}$ consists of edges between $(0,i)$ and $(1,i)$ for $0\leq i \leq m-1$. For brevity, we will denote $(\vv{C}_{m} \wr \overline{K}_{2}) \oplus mK_{2}^{*}$ by $\Gamma_m$.

\begin{lemma}\label{new}
For every integer $m\geq2$, $\Gamma_m$ has a $\{\vv{C}_{m}^{1}, \vv{C}_{2m}^{2}\}$-factorization.
\end{lemma}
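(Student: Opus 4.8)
The plan is to realize $\Gamma_m$ as the directed Cayley graph $\vv{X}(\mathbb{Z}_2\times\mathbb{Z}_m;S_2)$ with $S_2=\{(0,1),(1,0),(1,1)\}$, exactly as set up in the paragraph preceding the statement, and to exhibit one directed $m$-cycle factor together with two directed $2m$-cycle factors that are mutually arc-disjoint and together cover all $3\cdot 2m$ arcs. Since $|S_2|=3$ the digraph is $3$-regular in and out, so we need exactly three $2$-factors; one should be a $\vv{C}_m$-factor (two $m$-cycles on the $2m$ vertices) and the other two should each be a single $\vv{C}_{2m}$ (a Hamilton directed cycle of $\Gamma_m$).

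First I would peel off the connection element $(1,0)$: the arcs of difference $(1,0)$ form precisely the $m$ copies of $K_2^*$, i.e. the two-cycles $\big((0,i),(1,i)\big)$ for $0\le i\le m-1$. Rather than using these as a factor by themselves (they form a $\vv{C}_2$-factor, not what we want), I would instead combine them with the difference-$(0,1)$ arcs to build the first $2m$-cycle, and combine them again, reversed where needed, with the difference-$(1,1)$ arcs to build the second $2m$-cycle, leaving the remaining arcs of differences $(0,1)$ and $(1,1)$ to form the $\vv{C}_m$-factor. Concretely, I expect the cycle
$$\big((0,0),(0,1),(0,2),\dots,(0,m-1),(1,m-1),(1,m-2),\dots,(1,1),(1,0),(0,0)\big)$$
to be a Hamilton directed cycle of $\Gamma_m$: going up along $(0,i)\to(0,i+1)$ uses difference $(0,1)$, the step $(0,m-1)\to(1,m-1)$ uses $(1,0)$ (note $m-1\equiv -1$, so this is difference $(1,0)$), coming back down $(1,i)\to(1,i-1)$ uses difference $(0,-1)$ — which is wrong. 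So the descending leg must instead be traversed in the increasing direction or built from $(1,1)$-arcs; the bookkeeping to get every step into $S_2$ is the part that needs care. The cleanest fix is to use difference $(1,1)$ for the crossing steps: a cycle that alternates or spirals between the two layers, for instance using $(1,1)$-arcs to move $(\epsilon,i)\to(\epsilon+1,i+1)$ and $(0,1)$- or $(1,0)$-arcs to adjust parity, so that after $2m$ steps one returns to the start having visited every vertex once. I would write down one explicit such $2m$-cycle $\vv{D}$, check each of its $2m$ consecutive differences lies in $S_2$ and that it visits all $2m$ vertices, then check that $\vv{D}+(1,0)$ (a rotation by the non-trivial $\mathbb{Z}_2$ element) is arc-disjoint from $\vv{D}$, giving the two $\vv{C}_{2m}$-factors; the leftover arcs, being $3\cdot 2m - 2\cdot 2m = 2m$ arcs with each vertex having in- and out-degree $1$, form a directed $2$-factor, and a final argument (counting how many arcs of each difference remain, or directly tracing) shows its components have length $m$, not $2m$ — e.g. if the leftover consists of the pure difference-$(0,1)$ cycle on layer $0$ and the pure difference-$(0,1)$ cycle on layer $1$, these are exactly two $\vv{C}_m$'s.

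The main obstacle is precisely the combinatorial design of the two $2m$-cycles: I need them arc-disjoint from each other (easily arranged by taking the second to be a $\mathbb{Z}_2$-translate of the first, provided the first is not fixed setwise by that translation), and their complement must decompose into $m$-cycles rather than into a single $2m$-cycle or into $\vv{C}_2$'s. Getting the parity of the "crossing" arcs right so that the residual differences are exactly one full difference-$(0,1)$ orbit per layer (or another configuration that is visibly $m$-cyclic) is the delicate step; everything after that is routine verification of $2m$ differences against the three-element set $S_2$ and a degree/arc count. I would also double-check the small case $m=2$ separately, where $2m=4$ and $\Gamma_2$ has $4$ vertices, since the general formula for the cycles may degenerate there, and note that Lemma~\ref{new2} already handles the $\vv{C}_m \wr \overline{K}_2$ part, so only the interaction with the extra $mK_2^*$ needs new work.
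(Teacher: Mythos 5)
Your strategy coincides with the paper's: represent $\Gamma_m$ as $\vv{X}(\mathbb{Z}_2\times\mathbb{Z}_m;S_2)$, exhibit one explicit Hamilton directed $2m$-cycle $\vv{D}$ all of whose steps lie in $S_2$, take $\vv{D}+(1,0)$ as the second $\vv{C}_{2m}$-factor, and arrange that the leftover arcs are exactly the two pure difference-$(0,1)$ cycles, one per layer, which form the $\vv{C}_m$-factor. The gap is that you never actually produce $\vv{D}$: the only cycle you write out explicitly (up layer $0$, cross, down layer $1$) needs steps of difference $(0,-1)\notin S_2$, as you yourself observe, and the repair is left at the level of ``write down one explicit such cycle and check it.'' Moreover, the repair you gesture at is partly off target: if either $\vv{C}_{2m}$-factor uses $(0,1)$-arcs ``to adjust parity,'' then the residue cannot be the two pure $(0,1)$-cycles you name as the desired outcome, since those cycles use all $2m$ arcs of that difference; and your first allocation, a Hamilton cycle built from differences $(0,1)$ and $(1,0)$ only, does not even exist when $m$ is odd, because a closed walk using $k$ arcs of difference $(1,0)$ and $2m-k$ of difference $(0,1)$ forces $k\equiv 0\pmod 2$ and $k\equiv 0\pmod m$, which for odd $m$ leaves only the non-Hamiltonian possibilities $k=0$ and $k=2m$. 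Note the lemma is claimed for every $m\ge 2$, odd values included, unlike Lemma \ref{new2}.

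The missing cycle is exactly the paper's: take $\vv{C}_{(2)}=(u_0,u_1,\dots,u_{2m-1})$ with $u_{2i}=(0,i)$ and $u_{2i+1}=(1,i)$, i.e.\ strictly alternate a $(1,0)$-step with a $(1,1)$-step and use no $(0,1)$-arcs at all, so no ``parity adjustment'' is needed. Every step is in $S_2$, all $2m$ vertices occur, and the cycle closes for every $m\ge 2$ because the total displacement is $m(1,0)+m(1,1)=(0,0)$; in particular nothing degenerates at $m=2$. In $\vv{C}_{(2)}$ the $(1,0)$-arcs run from layer $0$ to layer $1$ and the $(1,1)$-arcs from layer $1$ to layer $0$, while in $\vv{C}_{(2)}+(1,0)$ both directions are reversed, so the two Hamilton cycles are arc-disjoint and together exhaust all $(1,0)$- and $(1,1)$-arcs, leaving precisely the two $(0,1)$-cycles as the $\vv{C}_m$-factor $F_1$. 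With this one explicit cycle and these short checks your outline becomes the paper's proof; as written, however, the construction that constitutes the content of the lemma is absent, so the argument is incomplete.
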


\begin{proof}
Let $\vv{C}_{(1)}=(v_0,v_1, \dots,\allowbreak v_{m-1})$ be a cycle of $\Gamma_m$, where $v_i=(0,i)$ for $0\leq i\leq m-1$, and it can be checked that $F_1=\vv{C}_{(1)}\cup (\vv{C}_{(1)}+(1,0))$ is a directed $m$-cycle factor of $\Gamma_m$. Also, let $\vv{C}_{(2)}=(u_0,u_1, \dots,u_{2m-1})$ be a cycle of $\Gamma_m$, where $u_{2i} = (0,i)$, and $u_{2i+1} = (1,i)$ for $0\leq i\leq m-1$. Similarly, it can be checked that $F_2=\vv{C}_{(2)}$ and $F_3=\vv{C}_{(2)}+(1,0)$ are arc disjoint directed $2m$-cycle factors of $\Gamma_m$. Thus $\{F_1,F_2,F_3\}$ is a $\{\vv{C}_{m}^{1}, \vv{C}_{2m}^{2}\}$-factorization of $\Gamma_m$.
\end{proof}

Following Lemmata give the base blocks of our main construction. The cases when $r=0$ and $s=0$ of the Lemmata are obtained by Theorem \ref{OP} and the remaining factorizations for Lemma \ref{lemma4.8} and \ref{lemma3} are given in the Appendix.

\begin{lemma}\label{lemma4.8}
For nonnegative integers $r$ and $s$, $\mathrm{HWP}^{*}(8; 4^{r}, 8^{s})$ has a solution if and only if $r+s=7$.
\end{lemma}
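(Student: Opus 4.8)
The plan is to prove HWP$^*(8; 4^r, 8^s)$ for all $r+s=7$. The necessity of $r+s=7$ is immediate from Lemma \ref{necessary}, so the work is the sufficiency direction: constructing, for each $r\in\{0,1,\dots,7\}$, a $\{\vv{C}_4^{r},\vv{C}_8^{s}\}$-factorization of $K_8^*$. The two extreme cases are already handled: $r=0$ is OP$^*(8^1)$ and $r=7$ is OP$^*(4^2)$, both of which have solutions by Theorem \ref{OP} since $(8,1)\neq(6,1)$ and $(4,2)\neq(6,1)$. So the real content is the five ``mixed'' cases $r\in\{1,2,3,4,5,6\}$.

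My approach would be to exploit the preliminary lemmata built up for exactly this purpose. Write $K_8^* \cong 2K_4^* \oplus (K_2^*\wr\overline{K}_4)$ as in the proof of Lemma \ref{mainlemma} (with $h=4$, $x=2$), so that $K_8^*$ decomposes into one $K_4^*$-factor $F_0$ and two $K_{(2:2)}^*$-factors $F_1, F_2$. Each $K_{(2:2)}^* \cong K_2^*\wr\overline{K}_2$ has a $\vv{C}_4$-factorization by Lemma \ref{lemma2}, contributing either two $\vv{C}_4$-factors or (if we instead use it differently) $\vv{C}_8$-factors. Meanwhile $F_0 = K_4^*$ is OP$^*(4^1)$, giving three $\vv{C}_4$-factors. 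This immediately yields $r\in\{3,5,7\}$. To reach the remaining parities $r\in\{1,2,4,6\}$ one needs a finer handle on $K_2^*\wr\overline{K}_4$, which is where Lemmata \ref{new2} and \ref{new} come in: since $K_2^*\wr\overline{K}_4 \cong K_4^* \wr \overline{K}_2 - (\text{something})$, and more usefully $K_4^*\wr\overline{K}_2 \cong (\vv{C}_4\text{-decomposition of }K_4^*)\wr\overline{K}_2$, we can apply Lemma \ref{new2} to each $\vv{C}_4\wr\overline{K}_2$ piece to get $\vv{C}_4$- or $\vv{C}_8$-factors, and combine with the $\Gamma_m$-decomposition of Lemma \ref{new} to mix in the diagonal $K_2^*$'s. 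Concretely: decompose $K_8^*$ using a near-1-factorization-style splitting of $K_4^*$ on blocks of size $2$, producing copies of $\Gamma_4$ (each a $\{\vv{C}_4^1,\vv{C}_8^2\}$-factorization by Lemma \ref{new}) and copies of $\vv{C}_4\wr\overline{K}_2$ (each $\vv{C}_4$- or $\vv{C}_8$-factorable by Lemma \ref{new2}), and tally the counts to hit every $r$.

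Rather than engineer a single uniform recipe, the cleanest route — and the one the paper signals with ``the remaining factorizations \dots\ are given in the Appendix'' — is to record explicit base factorizations of $K_8^*$ for the mixed cases $r\in\{1,2,4,6\}$ (and confirm $r\in\{3,5\}$ either explicitly or via the $F_0\oplus F_1\oplus F_2$ splitting above), listing the directed cycles of each factor on vertex set $\mathbb{Z}_8$, typically as orbits under a rotational automorphism $i\mapsto i+1$ or $i\mapsto i+2$ to compress the presentation. One verifies arc-disjointness and that each factor is spanning by a direct check of the difference set covered.

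The main obstacle is the existence and explicit construction of the mixed small cases, particularly those requiring exactly one or two $\vv{C}_4$-factors (forcing six or five $\vv{C}_8$-factors): here the $\vv{C}_8$-factors are Hamiltonian directed cycles of $K_8^*$, and one must partition most of the arc set of $K_8^*$ into arc-disjoint directed Hamilton cycles while reserving a small sub-digraph that admits a $\vv{C}_4$-factorization. Equivalently, one needs a directed-Hamilton-cycle-rich near-factorization of $K_8^*$; finding the right ``reserved'' piece (a copy of $K_4^*$, or a $\vv{C}_4\wr\overline{K}_2$, or a $\Gamma_4$) so that the complement is a union of directed Hamilton cycles is the combinatorial heart. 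Once a suitable such decomposition is exhibited, the counting is routine, and together with Theorem \ref{OP} for $r\in\{0,7\}$ this completes all eight cases and hence the lemma.
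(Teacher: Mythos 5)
Your necessity argument and the endpoints $r\in\{0,7\}$ via Theorem \ref{OP} are fine, but the heart of the lemma --- the mixed cases --- is never actually established, and the structural constructions you sketch cannot close the gap. The splitting $K_8^*\cong 2K_4^*\oplus(K_2^*\wr\overline{K}_4)$ produces only disconnected factors (one $K_4^*$-factor and two $K_{(2:2)}^*$-factors), and a disconnected factor of $K_8^*$ cannot yield any $\vv{C}_8$-factors, since a $\vv{C}_8$-factor is a directed Hamilton cycle of $K_8^*$; so this splitting gives only $r=7$, not $r\in\{3,5,7\}$ as you claim. Your second construction (one $\Gamma_4$ plus two copies of $\vv{C}_4\wr\overline{K}_2$, via Lemmata \ref{new} and \ref{new2}) does work, but it forces $r$ odd: $\Gamma_4$ contributes exactly one $\vv{C}_4$-factor and each $\vv{C}_4\wr\overline{K}_2$ contributes $0$ or $2$, so only $r\in\{1,3,5\}$ is reachable. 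With the lemmata available there is no way to ``tally the counts to hit every $r$''; the even values $r\in\{2,4,6\}$ face a parity obstruction unless you additionally produce, say, a $\{\vv{C}_4^{2},\vv{C}_8^{1}\}$- or $\vv{C}_8$-factorization of $\Gamma_4$, which you do not.

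For exactly those missing cases you fall back on ``record explicit base factorizations on $\mathbb{Z}_8$,'' but you never exhibit them; you yourself call their existence the main obstacle and the combinatorial heart. That existence is precisely what the lemma asserts and what the paper supplies: its proof consists of Theorem \ref{OP} for $r=0$ and $s=0$ together with explicit $\{\vv{C}_4^{r},\vv{C}_8^{7-r}\}$-factorizations of $K_8^*$ listed in the Appendix for every $r\in\{1,\dots,6\}$. As written, your argument proves the statement only for $r\in\{0,1,3,5,7\}$ and leaves $r\in\{2,4,6\}$ unproved, so the proposal has a genuine gap.
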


\begin{lemma}\label{lemma3}
For nonnegative integers $r$ and $s$, $\mathrm{HWP}^{*}(12; m^{r}, n^{s})$ has a solution for $(m,n)\in \{(4,6),(4,12)$, $(6,12)\}$ if and only if $r+s=11$.
\end{lemma}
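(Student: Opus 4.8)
The plan is to split the argument into the easy ``only if'' part and the constructive ``if'' part. For necessity, since each of $4$, $6$, $12$ divides $v = 12$, parts (1) and (2) of Lemma~\ref{necessary} are automatically satisfied, so the only condition that survives is part (3), namely $r + s = v - 1 = 11$. That disposes of the ``only if'' direction at once.

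For sufficiency I would fix a pair $(m,n)$ from the list, set $s = 11 - r$, and treat the two extreme values of $r$ separately from the rest. When $r = 11$ a solution is precisely a $\vv{C}_m$-factorization of $K_{12}^*$, i.e.\ a solution of $\mathrm{OP}^*(m^{12/m})$; for $m \in \{4,6\}$ this is $\mathrm{OP}^*(4^3)$ or $\mathrm{OP}^*(6^2)$, both of which exist by Theorem~\ref{OP} since the only exception there is $(6,1)$. Symmetrically, when $r = 0$ one needs $\mathrm{OP}^*(n^{12/n})$, that is $\mathrm{OP}^*(6^2)$ or $\mathrm{OP}^*(12^1)$, again given by Theorem~\ref{OP}.

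The substance is the mixed range $1 \le r \le 10$. Note that Lemma~\ref{mainlemma} cannot be used here: for $h = \mathrm{lcm}(m,n) = 12$ the statement we want \emph{is} the base case that lemma requires as input, and the natural decompositions of $K_{12}^*$ along a subgroup peel off a copy of $K_6^*$ or of $3K_4^*$, neither of which helps since $\mathrm{OP}^*(6^1)$ and $\mathrm{OP}^*(4^1)$ do not exist. I would therefore construct each mixed case directly by the difference method: take $V(K_{12}^*) = \mathbb{Z}_{12}$, so $K_{12}^* \cong \vv{X}(\mathbb{Z}_{12}; \mathbb{Z}_{12} \setminus \{0\})$, and for each target $r$ exhibit a family of starter directed $m$- and $n$-cycles whose orbits under the translations of $\mathbb{Z}_{12}$ (taken in full or in part as needed) are arc-disjoint, cover every arc of $K_{12}^*$, and bundle into exactly $r$ $\vv{C}_m$-factors and $11 - r$ $\vv{C}_n$-factors. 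Some ingredients come for free — the orbit of difference $3$ (or $9$) is a $\vv{C}_4$-factor, and each of the differences $1, 5, 7, 11$ generates a directed Hamilton cycle — but the even differences $2, 4, 6, 8, 10$ stay inside a fixed residue class mod $2$ and so cannot on their own produce $\vv{C}_4$- or $\vv{C}_{12}$-factors; the real work is picking starters that mix the two parity classes, and these base blocks, for each of the three pairs and each admissible $r$, are exactly what is recorded in the Appendix. I expect this search to be the main obstacle: one must confirm, for roughly ten values of $r$ in each of the three pairs, that a suitable family of starters exists. Once the starters are written down, verifying that their orbits partition $E(K_{12}^*)$ into the prescribed factors is a routine count of differences.
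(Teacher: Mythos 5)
Your outline matches the paper's proof of Lemma \ref{lemma3}: necessity comes from Lemma \ref{necessary}, the $r=0$ and $s=0$ cases from Theorem \ref{OP}, and the thirty mixed cases $1\le r\le 10$ rest on explicit small factorizations of $K_{12}^{*}$, which is exactly the data recorded in the Appendix that you point to. The only inessential difference is presentational: you frame those constructions as starter cycles to be developed under $\mathbb{Z}_{12}$-translations, whereas the Appendix simply lists the complete factorizations outright, so no orbit/difference argument is actually invoked.
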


\begin{lemma}\label{lemma6}
For nonnegative integers $r$ and $s$, $\mathrm{HWP}^{*}(16; m^{r}, n^{s})$ has a solution for $(m,n)\in \{(4,16),(8,16)\}$ if and only if $r+s=15$.
\end{lemma}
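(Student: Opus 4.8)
The plan is to mirror the strategy used for Lemmata \ref{lemma4.8} and \ref{lemma3}. The necessity of $r+s=15$ is immediate from Lemma \ref{necessary}(3) (with $v=16$), and since $4\mid 16$ and both $8\mid 16$ and $16\mid 16$, conditions (1) and (2) impose no further restriction; so all the work is in sufficiency. I would split into the two pairs $(m,n)=(4,16)$ and $(m,n)=(8,16)$, and within each pair handle the three regimes separately: the ``pure'' cases $s=0$ (an all-$\vv C_m$ factorization, i.e.\ $\mathrm{OP}^*(4^4)$ resp.\ $\mathrm{OP}^*(8^2)$) and $r=0$ (an all-$\vv C_{16}$ factorization, i.e.\ $\mathrm{OP}^*(16^1)$), both of which follow directly from Theorem \ref{OP} since $4,8,16$ are all even; and the genuinely mixed cases $r,s>0$ with $r+s=15$.

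For the mixed cases my approach is to build the $15$ factors out of smaller, already-understood pieces living in a convenient decomposition of $K_{16}^*$. Concretely, write $16 = 8\cdot 2$ (for the $(8,16)$ case) or exploit the structure $K_{16}^*\cong 2K_8^*\oplus(K_2^*\wr\overline K_8)$ and $K_{16}^*\cong 4K_4^*\oplus(K_4^*\wr\overline K_4)$ (for the $(4,16)$ case), and feed the equipartite pieces to Lemma \ref{lemma2}: $K_{(8:2)}^*$ has both a $\vv C_4$- and a $\vv C_8$- (and, if needed, a $\vv C_{16}$-)factorization since $8,16,4$ are even and $\geq 4$. The $\vv C_m\wr\overline K_2$ and $\Gamma_m$ building blocks of Lemmata \ref{new2} and \ref{new} let one upgrade a $\vv C_8$-factor on $8$ vertices to either a pair of $\vv C_8$-factors or a pair of $\vv C_{16}$-factors on $16$ vertices (and similarly convert a matching-type factor together with a cycle factor into a mix containing $\vv C_{16}$'s), which is exactly the granularity needed to realize every value of $s$ with $0<s<15$. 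Counting how many units of each cycle length each gadget contributes, and checking that the admissible sums cover the full interval $0\le s\le 15$, is the bookkeeping that makes the argument go through; the small stubborn values of $(r,s)$ that cannot be assembled this way are to be supplied by explicit base factorizations exhibited in the Appendix (as the paragraph preceding Lemma \ref{lemma4.8} announces).

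The main obstacle I anticipate is the same one that forces the Appendix in the companion lemmas: the gadget-based constructions produce factorizations only for $s$ in an arithmetic progression (steps of size $\tfrac h2=4$ or $8$, coming from the $\frac h2$-fold multiplicities in Lemma \ref{lemma2}-type pieces), so the ``off-residue'' values of $s$ near the ends of the range — and in particular the delicate case $r=s$-adjacent values where one cannot afford a full equipartite block — must be handled by hand. I would therefore spend the bulk of the effort isolating exactly which residue classes of $s$ the modular construction reaches, reducing the problem to a short finite list of exceptional $(r,s)$ pairs for each of $(4,16)$ and $(8,16)$, and then verifying (or citing from the Appendix) an explicit $\{\vv C_m^{r},\vv C_{16}^{s}\}$-factorization of $K_{16}^*$ for each of those. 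Once the pure cases (Theorem \ref{OP}), the modular family, and the finite exceptional list are all in place, they jointly cover every $(r,s)$ with $r+s=15$, completing the proof.
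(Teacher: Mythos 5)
Your overall strategy is the same as the paper's: necessity from Lemma \ref{necessary}, the pure cases $r=0$ and $s=0$ from Theorem \ref{OP}, a gadget construction built from Lemmata \ref{new2} and \ref{new} on top of a $\vv{C}_8$-factorization of $K_8^*$ wreathed with $\overline{K}_2$, and explicit Appendix factorizations for whatever the construction misses. That is exactly how the paper proves the $(8,16)$ case: $K_{16}^*\cong (K_8^*\wr\overline{K}_2)\oplus 8K_2^*$ gives six $\vv{C}_8\wr\overline{K}_2$ factors plus one $\Gamma_8$, each wreath factor becomes two $\vv{C}_8$- or two $\vv{C}_{16}$-factors, and $\Gamma_8$ becomes $\{\vv{C}_8^1,\vv{C}_{16}^2\}$, yielding all \emph{odd} $r$; the seven even-$r$ cases are Appendix data.

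Where your plan goes wrong is in the quantitative claims about what the modular part achieves. The gadget family does not have "exactly the granularity needed to realize every value of $s$ with $0<s<15$": it produces $r=2r_0+1$ only, i.e.\ one parity class with step $2$, so half of the mixed $(8,16)$ cases must be written down by hand. The "steps of size $\tfrac h2=4$ or $8$ coming from Lemma \ref{lemma2}-type pieces" belong to the recursive construction of Lemma \ref{mainlemma} for $v=hx$ with $x\ge 2$, not to the base case $v=16$, and the alternative decompositions you suggest feeding to Lemma \ref{lemma2} do not help here: in $K_{16}^*\cong 2K_8^*\oplus K_{(8:2)}^*$ or $4K_4^*\oplus(K_4^*\wr\overline{K}_4)$ no $\vv{C}_{16}$-factor can live in the disconnected $2K_8^*$ or $4K_4^*$ layer, and Lemma \ref{lemma2} only gives \emph{pure} factorizations of the equipartite layer, so these decompositions reach essentially nothing between the two extremes. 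In particular, for $(m,n)=(4,16)$ there is no construction of this kind in the paper at all — every mixed case $1\le r\le 14$ is an explicit Appendix factorization — so the "short finite list of exceptions" you hope to isolate is, for that pair, the entire range. Since you correctly defer those cases to the Appendix (as the paper itself does), the plan is salvageable, but as written the reduction step it promises is overstated and, for $(4,16)$, would not materialize.
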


\begin{proof}
By Theorem \ref{OP}, the cases when $r=0$ and $s=0$ are obtained.\\
\textbf{Case 1 :} $(m,n)=(8,16)$:

We will first analyse when $r$ is odd. We have that $K_{16}^{*} \cong (K_{8}^{*}\wr \overline{K}_{2})\oplus 8K_{2}^{*}$ by \eqref{3.1}, and $K_{8}^{*}$ have a $\vv{C}_{8}$-factorization by Lemma \ref{lemma4.8}. Then, we have a factorization of $K_{16}^{*}$ into six $\vv{C}_{8} \wr \overline{K}_{2}$ and a single $\Gamma_8$ factor.
Also, each $\vv{C}_{8}\wr \overline{K}_{2}$ can be decomposed into two $\vv{C}_{8}$ or two $\vv{C}_{16}$-factors by Lemma \ref{new2}. By Lemma \ref{new}, $\Gamma_8$ has a $\{\vv{C}_{8}^{1}, \vv{C}_{16}^{2}\}$-factorization. Now, let $r_0$ and $s_0$ be nonnegative integers with $r_0+s_0=6$. Decomposing $r_0$ many $\vv{C}_{8}\wr \overline{K}_{2}$'s into $\vv{C}_{8}$-factors and remaining $s_0$ many $\vv{C}_{8}\wr \overline{K}_{2}$'s into $\vv{C}_{16}$-factors, as well as $\Gamma_8$ into a $\{\vv{C}_{8}^{1}, \vv{C}_{16}^{2}\}$-factor gives us a $\{\vv{C}_{8}^{2r_0+1}, \vv{C}_{16}^{2s_0+2}\}$-factorization of $K_{16}^{*}$.

Since any odd integer $r$ can be written as $r=2r_0+1$ for nonnegative integer $r_0$, $\mathrm{HWP}^{*}(16; 8^{r}, 16^{s})$ has a solution for odd $r$ with $r+s=2r_0+1+2s_0+2=2(r_0+s_0)+3=15$.

We list the solutions to the remaining even cases in the Appendix.\\
\textbf{Case 2 :} For $(m,n)=(4,16)$, solutions to all cases are given in the Appendix except for $r=0$ and $s=0$.
\end{proof}

\begin{theorem}
For nonnegative integers $r$ and $s$, $\mathrm{HWP}^{*}(v; m^{r}, n^{s})$ has a solution for $(m,n)\in \{(4,6),(4,8),$
$(4,12),(4,16),(6,12),(8,16)\}$ if and only if $r+s=v-1$ and $lcm(m,n)\vert v$.
\end{theorem}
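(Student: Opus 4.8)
The plan is to prove both directions of the equivalence. For necessity, suppose $\mathrm{HWP}^{*}(v; m^{r}, n^{s})$ has a solution. By Lemma \ref{necessary}, $r+s=v-1$; moreover if $r>0$ then $m\mid v$, and if $s>0$ then $n\mid v$. The subtlety is that we want $\mathrm{lcm}(m,n)\mid v$ for \emph{all} admissible $r,s$, so I would argue it suffices to observe that the range $r+s=v-1$ with $r,s\geq 0$ always contains a value with $r>0$ (e.g.\ $r=v-1$, $s=0$) and a value with $s>0$; since each individual member of the claimed family must have a solution for \emph{every} such pair, taking $r>0$ forces $m\mid v$ and taking $s>0$ forces $n\mid v$, hence $\mathrm{lcm}(m,n)\mid v$. (For the degenerate case $v=1$ there is nothing to factor; and $v\geq \max(m,n)\geq 4$ in all our pairs, so this is harmless.)

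For sufficiency, fix one of the six pairs $(m,n)$ and write $h=\mathrm{lcm}(m,n)$. The hypothesis $h\mid v$ lets me write $v=hx$ for a positive integer $x$. The strategy is to invoke Lemma \ref{mainlemma}: it reduces the problem on $K_{hx}^{*}$ to the base case $\mathrm{HWP}^{*}(h; m^{r'}, n^{s'})$ for all nonnegative $r',s'$ with $r'+s'=h-1$. So I just need the base case for each pair. For $(m,n)=(4,8)$ we have $h=8$, which is exactly Lemma \ref{lemma4.8}. For $(m,n)\in\{(4,6),(4,12),(6,12)\}$ we have $h=12$, which is Lemma \ref{lemma3}. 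For $(m,n)\in\{(4,16),(8,16)\}$ we have $h=16$, which is Lemma \ref{lemma6}. In every instance the hypotheses of Lemma \ref{mainlemma} are met ($m,n\geq 4$ even, and the base factorization exists for the full range), so Lemma \ref{mainlemma} delivers a $\{\vv{C}_{m}^{r},\vv{C}_{n}^{s}\}$-factorization of $K_{hx}^{*}=K_v^{*}$ for every $r,s\geq 0$ with $r+s=hx-1=v-1$, which is precisely a solution to $\mathrm{HWP}^{*}(v; m^{r}, n^{s})$.

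In short, the theorem is essentially a bookkeeping assembly: necessity is Lemma \ref{necessary} applied at the extreme values of $r$ and $s$, and sufficiency is Lemma \ref{mainlemma} with the three base-case lemmas plugged in according to the value of $h=\mathrm{lcm}(m,n)$. I do not expect a genuine obstacle here — the real work has already been done in establishing Lemma \ref{mainlemma} and in the explicit small factorizations collected in the Appendix for Lemmata \ref{lemma4.8}, \ref{lemma3}, and \ref{lemma6}. The only point requiring a little care in the write-up is making the necessity argument cover the whole family uniformly (rather than a single $(r,s)$), and noting that the cases $r=0$ or $s=0$ of the base lemmas come from Theorem \ref{OP} so that the base range really is complete.
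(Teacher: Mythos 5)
Your proposal is correct and follows essentially the same route as the paper: necessity via Lemma \ref{necessary} (noting $m\mid v$ and $n\mid v$ give $\mathrm{lcm}(m,n)\mid v$), and sufficiency by feeding the base cases of Lemmata \ref{lemma4.8}, \ref{lemma3}, and \ref{lemma6} (with the $r=0$, $s=0$ endpoints from Theorem \ref{OP}) into Lemma \ref{mainlemma} according to $h=\mathrm{lcm}(m,n)\in\{8,12,16\}$. Your extra remark on making the necessity argument cover the full range of $(r,s)$ is just a more explicit rendering of what the paper does implicitly, not a different approach.
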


\begin{proof}
If a solution to HWP$^{*}(v; m^{r}, n^{s})$ exists for $(m,n)\in \{(4,6),(4,8),\allowbreak(4,12),(4,16),(6,12),(8,16)\}$, then by Lemma \ref{necessary} we have $r+s=v-1$, and since $m\vert v$ and $n\vert v$ we have $h=lcm(m,n)\vert v$.

For the sufficiency part, assume $h\vert v$ and $r+s=hx-1=v-1$ where $x$ is a nonnegative integer. 

For $(m,n)=(4,8)$, HWP$^{*}(8; 4^{r_0}, 8^{s_0})$ has a solution for all nonegative $r_0$ and $s_0$ with $r_0+s_0=7$ by Lemma \ref{lemma4.8}. Then, HWP$^{*}(v; 4^{r}, 8^{s})$ has a solution for $r+s=8x-1=v-1$ by Lemma \ref{mainlemma}.

For $(m,n)\in\{(4,6),(4,12),(6,12)\}$, HWP$^{*}(12; m^{r_1}, n^{s_1})$ has a solution for all nonegative $r_1$ and $s_1$ with $r_1+s_1=11$ by Lemma \ref{lemma3}. Then, HWP$^{*}(v; m^{r}, n^{s})$ has a solution by Lemma \ref{mainlemma} for $(m,n)\in \{(4,6),(4,12)$, $(6,12)\}$ with $r+s=12x-1=v-1$.

For $(m,n)\in \{(4,16),(8,16)\}$, HWP$^{*}(16; m^{r_2}, n^{s_2})$ has a solution for all nonegative $r_2$ and $s_2$ with $r_2+s_2=15$ by Lemma \ref{lemma6}. Then, by Lemma \ref{mainlemma}, HWP$^{*}(v; m^{r}, n^{s})$ has a solution for $(m,n)\in \{(4,16),(8,16)\}$ with $r+s=16x-1=v-1$.
\end{proof}

\section{Odd Cycle Sizes}

Here we first give the following main construction, and using this construction we prove that HWP$^{*}(v; m^{r}, n^{s})$ has a solution for $(m,n)\in \{(3,5),(3,15),\allowbreak(5,15)\}$ with $r+s=v-1$, where $v$ is odd.

\begin{lemma}\label{oddmainlemma}
Let $m\geq3$ and $n\geq3$ be odd, $h=lcm(m,n)$ and $3\vert h$. If $\mathrm{HWP}^{*}(h; m^{r'}, n^{s'})$ has a solution for all $r'$, $s'$ satisfying $r'+s'=h-1$, then there is a solution to $\mathrm{HWP}^{*}(hx; m^{r}, n^{s})$ for all nonnegative $r$, $s$ and odd $x$ satisfying $r+s=hx-1$.
\end{lemma}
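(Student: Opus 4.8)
The plan is to imitate the structure of the proof of Lemma~\ref{mainlemma}, replacing the even-cycle ingredient $K_{(h/2:2)}^{*}$ (which came from Lemma~\ref{lemma2}) by a complete equipartite digraph governed by Theorem~\ref{liu}. As there, I would write
\[
 K_{hx}^{*}\;\cong\;xK_{h}^{*}\;\oplus\;\bigl(K_{x}^{*}\wr\overline{K}_{h}\bigr),
\]
assume $x\ge3$ (the case $x=1$ is exactly the hypothesis), give each of the $x$ copies of $K_{h}^{*}$ \emph{the same} $\{\vv{C}_{m}^{r'},\vv{C}_{n}^{s'}\}$-factorization (which exists for every $r'+s'=h-1$ by hypothesis), contributing $r'$ $\vv{C}_{m}$- and $s'$ $\vv{C}_{n}$-factors to $K_{hx}^{*}$, and extract the remaining $h(x-1)$ factors from $K_{x}^{*}\wr\overline{K}_{h}\cong K_{(h:x)}^{*}$. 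The new work is to split $K_{(h:x)}^{*}$ into blocks each carrying \emph{both} a $\vv{C}_{m}$- and a $\vv{C}_{n}$-factorization; since $m,n$ are odd, Lemma~\ref{lemma2} is useless here and the hypothesis $3\mid h$ must be used instead.

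For this I would use the coincidence $C_{3}=K_{3}$. By $\overline{K}_{h}\cong\overline{K}_{3}\wr\overline{K}_{h/3}$ and associativity of the wreath product, $K_{(h:x)}=K_{x}\wr\overline{K}_{h}\cong K_{(3:x)}\wr\overline{K}_{h/3}$. Since $x$ is odd, $3(x-1)$ is even and $(3,x,3)$ is not one of the exceptional triples of Theorem~\ref{liu}, so $K_{(3:x)}$ has a $C_{3}$-factorization with $\tfrac{3(x-1)}{2}$ factors, each factor being $xK_{3}$ (that is, $x$ disjoint triangles). Lifting this factorization through $\wr\overline{K}_{h/3}$ (a property of the wreath product recorded among the preliminaries) and using $K_{3}\wr\overline{K}_{h/3}=K_{(h/3:3)}$ decomposes $K_{(h:x)}$ into $\tfrac{3(x-1)}{2}$ blocks, each isomorphic to $xK_{(h/3:3)}$; by Lemma~\ref{lemma1.3} this yields a decomposition of $K_{x}^{*}\wr\overline{K}_{h}\cong K_{(h:x)}^{*}$ into $\tfrac{3(x-1)}{2}$ blocks, each isomorphic to $xK_{(h/3:3)}^{*}$. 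Now one checks $K_{(h/3:3)}$ against Theorem~\ref{liu} for the cycle lengths $m$ and $n$: divisibility holds since $m,n\mid h$; the parity condition holds since $\tfrac{2h}{3}$ is automatically even; and the forbidden triples are avoided, since they would force $h\in\{6,18\}$, impossible for the odd number $h$. Hence $K_{(h/3:3)}$ has a $C_{m}$- and a $C_{n}$-factorization with $\tfrac{h}{3}$ factors each, and splitting each $C_{k}^{*}$-factor into two directed $k$-cycle factors (as in Observation~\ref{obs}) equips each of the $\tfrac{3(x-1)}{2}$ blocks with both a $\vv{C}_{m}$- and a $\vv{C}_{n}$-factorization, $\tfrac{2h}{3}$ factors each. (Blowing up $\vv{C}_{3}$ directly would fail, as $\vv{C}_{3}\wr\overline{K}_{h/3}$ has directed cycles only of lengths divisible by $3$; staying undirected until this point is exactly what lets $C_{3}=K_{3}$ do the work.)

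The rest is counting. For $0\le a\le\tfrac{3(x-1)}{2}$, giving $a$ of the blocks their $\vv{C}_{m}$-factorization and the rest their $\vv{C}_{n}$-factorization produces a $\{\vv{C}_{m}^{r},\vv{C}_{n}^{s}\}$-factorization of $K_{hx}^{*}$ with $r=r'+\tfrac{2h}{3}a$ and $s=s'+\tfrac{2h}{3}\bigl(\tfrac{3(x-1)}{2}-a\bigr)$, so $r+s=(h-1)+h(x-1)=hx-1$. As $r'$ runs over $\{0,\dots,h-1\}$ and $a$ over $\{0,\dots,\tfrac{3(x-1)}{2}\}$, the number $r$ runs over the union of the intervals $\bigl[\tfrac{2h}{3}a,\;\tfrac{2h}{3}a+h-1\bigr]$; since $\tfrac{2h}{3}\le h-1$, consecutive intervals overlap (or abut), so the union is all of $\{0,1,\dots,hx-1\}$, as needed. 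The main obstacle I expect is purely the bookkeeping: confirming that no forbidden triple of Theorem~\ref{liu} is hit (this is where the oddness of both $h$ and $x$ is used), disposing of the degenerate cases $x=1$ and $h=3$ (the latter being $m=n=3$, handled by Theorem~\ref{OP}) by hand, and verifying the interval-covering arithmetic; the single new idea is to route everything through the triangle decomposition of $K_{(3:x)}$, after which $C_{3}=K_{3}$ makes the blown-up blocks complete multipartite and hence subject to Theorem~\ref{liu}.
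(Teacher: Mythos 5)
Your proposal is correct and takes essentially the same route as the paper: decompose $K_{hx}^{*}$ into $xK_{h}^{*}$ plus $K_{x}^{*}\wr\overline{K}_{h}$, turn the latter into $\tfrac{3(x-1)}{2}$ spanning blocks of copies of $K_{(h/3:3)}^{*}$ by blowing up a resolvable triangle factorization of $K_{x}\wr\overline{K}_{3}\cong K_{(3:x)}$ with $\overline{K}_{h/3}$, then give each block a $\vv{C}_{m}$- or $\vv{C}_{n}$-factorization via Theorem~\ref{liu} and Lemma~\ref{lemma1.3} and finish with the same $r=r'+\tfrac{2h}{3}a$ interval-covering count. The only (harmless) difference is that you get the triangle factorization of $K_{(3:x)}$ from Theorem~\ref{liu} and treat $x=1$, $h=3$ separately, whereas the paper obtains it from a Kirkman triple system of order $3x$; both uses require $x$ odd.
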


\begin{proof}
%The second factor in (\ref{eq1}), 
By \eqref{3.1}, we have a decomposition of $K_{hx}^{*}$ into a $K_h^{*}$ and a $\left(K_{x}^{*} \wr \overline{K}_{h}\right)$-factor. Since $\overline{K}_{h}\cong \overline{K}_{3}\wr \overline{K}_{\frac{h}{3}}$, we have $K_{x}^{*} \wr \overline{K}_{h} \cong (K_{x}^{*} \wr \overline{K}_{3})\wr \overline{K}_{\frac{h}{3}}$. 

It is clear that $K_{x}^{*} \wr \overline{K}_{3}$ is isomorphic to $K_{3x}^{*} -xK_{3}^{*}$. Since Kirkman triple system of order $3x$ exists, we have a $C_3$-factorization of $K_{3x}$. Then, a $C_{3}^{*}\cong K_{3}^{*}$-factorization of $K_{3x}^{*} -xK_{3}^{*}$ is obtained by Lemma \ref{lemma1.3}. So, $K_{x}^{*} \wr \overline{K}_{3}$ has a decomposition into $\frac{3x-3}{2}$ $K_{3}^{*}$-factors. In $K_{x}^{*} \wr \overline{K}_{h}$, these $K_{3}^{*}$-factors form $K_{(\frac{h}{3}:3)}^{*}$-factors since $K_{3}^{*}\wr \overline{K}_{\frac{h}{3}} \cong K_{(\frac{h}{3}:3)}^{*}$.

Let $F_0$ be the $K_{h}^*$-factor and $F_1,F_2,\dots,F_{\frac{3x-3}{2}}$ be the $K_{(\frac{h}{3}:3)}^*$-factors of $K_{hx}^{*}$. Since HWP$^{*}(h; m^{r'}, n^{s'})$ is assumed to have a solution for all nonnegative integers $r'$ and $s'$ where $r'+s'=h-1$, $F_0$ has a $\{\vv{C}_{m}^{r'},\vv{C}_{n}^{s'}\}$-factorization for all nonnegative integers $r'$ and $s'$ with $r'+s'=h-1$. Also, $K_{(\frac{h}{3}:3)}^*$ has a $\vv{C}_{m}$-factorization and a $\vv{C}_{n}$-factorization by Lemma \ref{lemma1.3} and Theorem \ref{liu}, so each $F_j$ has a $\{\vv{C}_{m}^{\frac{2h}{3}r_j},\vv{C}_{n}^{\frac{2h}{3}s_j}\}$-factorization for $j\in \{1,2, \dots,\frac{3x-3}{2}\}$, where $r_j, s_j \in \{0, 1 \}$ with $r_j+s_j=1$. These factorizations give us a $\{\vv{C}_{m}^{r},\vv{C}_{n}^{s}\}$-factorization of $K_{hx}^{*}$ where $r=r'+\sum_{i=0}^{\frac{3x-3}{2}} \frac{2h}{3}r_i$ and $s=s'+\sum_{i=0}^{\frac{3x-3}{2}} \frac{2h}{3} s_i$ with $r+s=r'+s'+\sum_{i=0}^{\frac{3x-3}{2}} \frac{2h}{3}(r_i+s_i)=h-1+hx-h=hx-1$.

Since any nonnegative integer $0\leq r\leq hx-1$ can be written as $r=r'+\frac{2h}{3}a$ for integers $0\leq r'\leq h-1$ and $0\leq a\leq \frac{3x-3}{2}$, a solution to $\mathrm{HWP}^{*}(hx; m^r, n^s)$ exists for each $r \geq 0$ and  $s \geq 0$ satisfying $r+s=hx-1$.
\end{proof}

\begin{lemma}\label{lemma3.2}
For nonnegative integers $r$ and $s$, $\mathrm{HWP}^{*}(15; m^{r}, n^{s})$ has a solution for $(m,n)\in \{(3,5),(3,15)$, $(5,15)\}$ if and only if $r+s=14$ except possibly for $r\in \{11,12,13\}$ when $(m, n)=(3, 5)$ and for $r=13$ when $(m, n)=(3, 15)$.
\end{lemma}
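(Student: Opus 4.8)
The ``only if'' part is immediate from Lemma~\ref{necessary} together with $m\mid 15$ and $n\mid 15$, so the content is to build, for each admissible pair $(m,n)$ and each $r\in\{0,1,\dots,14\}$ outside the exception list, a $\{\vv{C}_{m}^{r},\vv{C}_{n}^{s}\}$-factorization of $K_{15}^{*}$ with $s=14-r$. The two endpoints come for free from Theorem~\ref{OP}: $r=14$ is $\mathrm{OP}^{*}(3^{5})$ in all three cases, and $r=0$ is $\mathrm{OP}^{*}(5^{3})$ (for $(3,5)$ and $(5,15)$) or $\mathrm{OP}^{*}(15^{1})$ (for $(3,15)$ and $(5,15)$), none of which is the forbidden pair $(6,1)$.

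Next I would clear all the even values $r=2r_{0}$, $s=2s_{0}$ (so $r_{0}+s_{0}=7$) by Observation~\ref{obs}: a solution of the undirected instance $\mathrm{HWP}(15;m^{r_{0}},n^{s_{0}})$, available for $v=15$ from \cite{Adams2002}, doubles to a solution of $\mathrm{HWP}^{*}(15;m^{2r_{0}},n^{2s_{0}})$. This settles every even $r$ whose underlying undirected case is in hand; the single instance $\mathrm{HWP}(15;3^{6},5^{1})$ that is not available is exactly why $r=12$ must be listed as a possible exception for $(3,5)$.

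The remaining values --- all odd $r$ for the three pairs, plus $r=12$ for $(3,5)$ --- need a genuinely directed construction, and here I would imitate the strategy of the even-cycle section. Using \eqref{3.1}, split $K_{15}^{*}$ as $5K_{3}^{*}\oplus K_{(3:5)}^{*}$ (from $K_{5}^{*}\wr\overline{K}_{3}\cong K_{(3:5)}^{*}$) or as $3K_{5}^{*}\oplus K_{(5:3)}^{*}$ (from $K_{3}^{*}\wr\overline{K}_{5}\cong K_{(5:3)}^{*}$); the fixed summand is harmless, since $5K_{3}^{*}$ is a union of two $\vv{C}_{3}$-factors and $3K_{5}^{*}$ is a union of four $\vv{C}_{5}$-factors. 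What is left is to equip the equipartite digraph $K_{(3:5)}^{*}$ (resp. $K_{(5:3)}^{*}$) with a factorization that \emph{mixes} the two target cycle lengths --- pure $\vv{C}_{m}$- and $\vv{C}_{n}$-factorizations of it already exist by Lemma~\ref{lemma1.3} and Theorem~\ref{liu}. For that I would decompose $K_{3}^{*}$ as $\vv{C}_{3}\oplus\vv{C}_{3}^{\mathrm{rev}}$ (and $K_{5}^{*}$ into four directed $5$-cycles), reducing each equipartite digraph to an $\oplus$-sum of blocks $\vv{C}_{t}\wr\overline{K}_{\ell}$, each of which admits a $\vv{C}_{t}$-factorization and a $\vv{C}_{t\ell}$-factorization (Latin-square and Hamilton decompositions) and, with the help of a $\Gamma$-type block carrying an \emph{odd} number of factors, the intermediate mixtures needed to reach odd $r$ --- exactly the roles Lemmas~\ref{new2} and~\ref{new} play in the even case. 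The small block factorizations that do not follow from anything general, together with a short list of residual $\mathrm{HWP}^{*}(15;\cdot,\cdot)$ instances, I would record explicitly, just as the base cases behind Lemmas~\ref{lemma4.8} and~\ref{lemma3}.

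The hard part is the triangle-heavy end of the range. For $(3,5)$ there is no Hamilton-cycle slack whatsoever, and the blocks we are forced to use --- $K_{(3:5)}^{*}$ and its $\vv{C}_{3}$-based pieces --- do not appear to support a $\{\vv{C}_{3}^{a},\vv{C}_{5}^{b}\}$-factorization once the number of triangle-factors is within two of its maximum; together with the missing undirected instance behind $r=12$, this is what forces $r\in\{11,12,13\}$ to stay open. The same effect, one notch milder, leaves $r=13$ open for $(3,15)$: being odd it cannot be reached by doubling, and the direct construction runs out of room just below it. For $(5,15)$ the $\vv{C}_{15}$-factors --- obtained through $\vv{C}_{5}\wr\overline{K}_{3}$-type blocks, which happen to have three (hence odd) factors --- supply enough flexibility that nothing is lost, so there the work is only to certify all the claimed block factorizations, preferably by an explicit construction rather than a search.
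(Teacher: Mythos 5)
Your treatment of the ``only if'' direction, the endpoints $r=0$ and $s=0$ via Theorem~\ref{OP}, and the even values of $r$ via Observation~\ref{obs} applied to the undirected solutions of \cite{Adams2002} (including the identification of the missing undirected instance $\mathrm{HWP}(15;3^{6},5^{1})$ with the exception $r=12$ for $(3,5)$) coincides with the paper's proof. The gap is in the odd cases, which are the bulk of the lemma: for $(3,5)$ you must actually produce $r=1,3,5,7,9$, for $(3,15)$ the odd $r\le 11$, and for $(5,15)$ all odd $r\le 13$. The paper settles these by listing explicit $\{\vv{C}_{m}^{r},\vv{C}_{n}^{s}\}$-factorizations of $K_{15}^{*}$ in the Appendix; your proposal only sketches a construction and explicitly defers the ``small block factorizations'' and ``residual instances'' to unspecified explicit data, so nothing is proved for these values.

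Moreover, the sketch itself has a structural problem. Every directed cycle in a block of the form $\vv{C}_{t}\wr\overline{K}_{\ell}$ has length a multiple of $t$, so a single block can never carry both $\vv{C}_{3}$- and $\vv{C}_{5}$-factors: the blocks $\vv{C}_{5}\wr\overline{K}_{3}$ arising from $K_{(3:5)}^{*}$ yield only $\vv{C}_{5}$- or $\vv{C}_{15}$-factors, and the blocks $\vv{C}_{3}\wr\overline{K}_{5}$ arising from $K_{(5:3)}^{*}$ yield only $\vv{C}_{3}$- or $\vv{C}_{15}$-factors, so the mixing you need for $(3,5)$ cannot happen inside blocks at all. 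The $\Gamma$-type parity device of Lemmas~\ref{new2} and~\ref{new} is also not available as described: in the even section it exploits the leftover $xK_{2}^{*}$ of the size-$2$ partition, whereas here the leftover is $5K_{3}^{*}$ or $3K_{5}^{*}$, which you have already spent as pure $\vv{C}_{3}$- or $\vv{C}_{5}$-factors; an analogue would itself require new explicit factorizations. Finally, the remarks purporting to explain why $r\in\{11,13\}$ for $(3,5)$ and $r=13$ for $(3,15)$ remain open (``do not appear to support'', ``runs out of room'') are speculation, not argument --- harmless, since the lemma lists them only as possible exceptions, but they should not be presented as consequences of the construction.
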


\begin{proof}
The cases when $r= 0$ and $s=0$ can be obtained by Theorem \ref{OP}.

In \cite{Adams2002}, a solution to HWP$(15; m^{r_0}, n^{s_0})$ for $(m,n)\in \{(3,5),(3,15),(5,15)\}$ with the exception $(m,n,r_0,s_0)=(3,5,6,1)$ is given by Theorem 4.1. Thus, by Observation \ref{obs}, we have a solution to HWP$^{*}(15; m^{r}, n^{s})$ for $(m,n)\in \{(3,5),(3,15)$, $(5,15)\}$ with $r$ and $s$ are positive even integers except possibly for $(m,n,r,s)=(3,5,12,2)$. We list the solutions for the odd cases in the Appendix.
\end{proof}

Although there are missing cases in here, we can still find a solution of HWP$^{*}(v; m^{r}, n^{s})$ for $(m,n)\in \{(3,5),(3,15),(5,15)\}$ with $r+s=v-1$ and for odd $v>15$, using the main construction.

\begin{theorem}\label{thm4.3}
For all nonnegative integers $r$, $s$ and odd $v>15$, $\mathrm{HWP}^{*}(v;\allowbreak m^{r}, n^{s})$ has a solution for $(m,n)\in \{(3,5),(3,15),(5,15)\}$ if and only if $r+s=v-1$ and $15\vert v$.
\end{theorem}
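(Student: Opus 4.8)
The plan is to follow exactly the template established for the even cycle sizes in the previous section: reduce the general statement to a base case via the main construction (here Lemma~\ref{oddmainlemma}), and invoke Lemma~\ref{lemma3.2} for the base case $v=15$. First I would establish necessity. If $\mathrm{HWP}^{*}(v; m^{r}, n^{s})$ has a solution for $(m,n)\in\{(3,5),(3,15),(5,15)\}$, then Lemma~\ref{necessary} immediately gives $r+s=v-1$; and since $r>0$ or $s>0$ (as $r+s=v-1>14>0$), at least one of $m\mid v$, $n\mid v$ holds. Actually to get $15\mid v$ I must be slightly more careful: when $r=0$ we have $n\mid v$, when $s=0$ we have $m\mid v$, and when both are positive we get both $m\mid v$ and $n\mid v$, hence $\mathrm{lcm}(m,n)\mid v$. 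For each of the three pairs $\mathrm{lcm}(m,n)=15$, so I need to note that the endpoint cases $r=0$ and $s=0$ force $n\mid v$ and $m\mid v$ respectively, but in those cases the \emph{other} exponent equals $v-1$; combined with $v>15$ odd this is consistent only if $15\mid v$ as well — more simply, one can just observe that whenever $r+s=v-1\geq 2$ we can always find a partition with both parts positive (since $v-1\geq 2$), so a solution with both a $\vv{C}_m$-factor and a $\vv{C}_n$-factor exists for \emph{some} $(r,s)$, forcing $m\mid v$ and $n\mid v$ and hence $15\mid v$. I would phrase necessity through this last observation to keep it clean.

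For sufficiency, assume $v>15$ is odd with $15\mid v$ and $r+s=v-1$. Write $v=15x$; since $v$ is odd and divisible by $15$, $x$ is an odd positive integer, and since $v>15$ we have $x\geq 3$. The hypotheses of Lemma~\ref{oddmainlemma} are met for each pair: $m,n$ are odd with $m,n\geq 3$, $h=\mathrm{lcm}(m,n)=15$, and $3\mid h$. The remaining hypothesis of Lemma~\ref{oddmainlemma} is that $\mathrm{HWP}^{*}(15; m^{r'}, n^{s'})$ has a solution for \emph{all} nonnegative $r',s'$ with $r'+s'=14$. Here is the subtlety: Lemma~\ref{lemma3.2} does \emph{not} establish all of these — it leaves open $r'\in\{11,12,13\}$ for $(m,n)=(3,5)$ and $r'=13$ for $(m,n)=(3,15)$. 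So I cannot apply Lemma~\ref{oddmainlemma} verbatim as a black box for those pairs; I have to look inside the construction.

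The key observation rescuing the argument is that in the proof of Lemma~\ref{oddmainlemma}, one writes $r=r'+\tfrac{2h}{3}a = r' + 10a$ with $0\le r'\le 14$ and $0\le a\le \tfrac{3x-3}{2}$, and the base factorization of $F_0$ is only ever used at the value $r'$. Since $x\geq 3$, the range of $a$ is $\{0,1,\dots,\tfrac{3x-3}{2}\}$ with $\tfrac{3x-3}{2}\geq 3$, so every target $r\in\{0,\dots,15x-1\}$ admits a representation $r=r'+10a$ in which $r'$ avoids the bad set. Concretely, for $(m,n)=(3,5)$ the forbidden residues are $r'\in\{11,12,13\}$; given any $r$, pick $a=\lfloor r/10\rfloor$ so that $r'=r-10a\in\{0,\dots,9\}$ (good), except possibly near the top end where $a$ would exceed its bound — but there one decreases $a$ by one and uses $r'\in\{10,14\}$ (both good, still avoiding $\{11,12,13\}$) since a drop of $a$ by $1$ only needs to be absorbed by $r'$ increasing by $10$, keeping $r'\le 14$. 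A similar, even easier, adjustment handles $r'=13$ for $(m,n)=(3,15)$. So I would restate Lemma~\ref{oddmainlemma} internally (or prove a one-line variant) saying: it suffices that the base case holds for all $r'\in\{0,\dots,14\}\setminus B$ provided $B$ is small enough that every $r$ is hit; then verify $B=\{11,12,13\}$ and $B=\{13\}$ qualify when $x\geq 3$. The main obstacle, then, is precisely this bookkeeping — showing that the exceptional base cases in Lemma~\ref{lemma3.2} never actually obstruct the construction when $x\ge 3$ — rather than any new combinatorial design. For $(m,n)=(5,15)$ there are no exceptions and the theorem follows from Lemma~\ref{oddmainlemma} and Lemma~\ref{lemma3.2} directly. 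I would close by assembling the three pairs and noting that $\mathrm{lcm}(m,n)=15$ throughout, so ``$15\mid v$'' is the uniform divisibility condition.
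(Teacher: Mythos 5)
Your overall architecture is the same as the paper's: necessity from Lemma~\ref{necessary}, sufficiency by feeding the $v=15$ base cases of Lemma~\ref{lemma3.2} into the construction of Lemma~\ref{oddmainlemma} and writing $r=r'+10a$ with $0\le r'\le 14$, $0\le a\le\tfrac{3x-3}{2}$. You were right to flag that Lemma~\ref{lemma3.2} leaves base cases open, so the main lemma cannot be invoked as a black box (the paper's own proof simply asserts that every $r\in[0,15x-1]$ has the form $r'+10a$ with $0\le r'\le 14$, without excluding the bad $r'$). But your patch does not work: the claim that every target $r$ admits a representation avoiding the exceptional set is false. Since $10\cdot\tfrac{3x-3}{2}=15x-15$, the values $r=15x-4,\ 15x-3,\ 15x-2$ (equivalently $s=3,2,1$) force $a=\tfrac{3x-3}{2}$ and hence $r'=11,12,13$ respectively --- exactly the open cases for $(m,n)=(3,5)$ --- and likewise $r=15x-2$ forces $r'=13$ for $(m,n)=(3,15)$. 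Your suggested adjustment, ``decrease $a$ by one and use $r'\in\{10,14\}$'', is not available: lowering $a$ by one raises $r'$ by $10$, to at least $21>14$. Nor can one trade the roles of the two cycle lengths: $s\in\{1,2,3\}$ forces $b=0$ and $s'=s$, i.e.\ the same excluded factorizations of $K_{15}^{*}$.

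So the bookkeeping you describe as ``the main obstacle'' is a genuine obstruction, not a routine verification: with only the ingredients available (each $K_{(5:3)}^{*}$-factor used entirely for $\vv{C}_m$- or entirely for $\vv{C}_n$-factors, plus Lemma~\ref{lemma3.2}), the cases $s\in\{1,2,3\}$ for $(3,5)$ and $s=1$ for $(3,15)$ with $v=15x>15$ are not produced. Closing them needs something extra --- settling the missing $v=15$ cases, or a mixed $\{\vv{C}_3,\vv{C}_5\}$- (resp.\ $\{\vv{C}_3,\vv{C}_{15}\}$-) factorization of $K_{(5:3)}^{*}$ or of $xK_{15}^{*}\oplus F_1$ with between one and three factors of the longer cycle --- and your proposal does not supply it (the paper's two-sentence conclusion is silent on the same point). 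For $(m,n)=(5,15)$, where Lemma~\ref{lemma3.2} has no exceptions, your argument goes through exactly as in the paper; your treatment of the necessity direction is fine under the intended ``for all $(r,s)$ with $r+s=v-1$'' reading of the statement.
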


\begin{proof}
If a solution to HWP$^{*}(v; m^{r}, n^{s})$ exists for $(m,n)\in \{(3,5),(3,15),\allowbreak(5,15)\}$, then by Lemma \ref{necessary}, $r+s=v-1$ and $lcm(m,n)=15\vert v$.

For the sufficiency part, assume $v=15x$ and $r+s=15x-1$ where $x>1$ is an odd integer. 

Using the main construction in Lemma \ref{oddmainlemma} for $v=15x$, we have a decomposition of $K_{v}^{*}$ into a $K_{15}^{*}$ and $\frac{3x-3}{2}$ $K_{(5:3)}^{*}$ factors.
$K_{15}^{*}$ has a $\{\vv{C}_{m}^{r'},\vv{C}_{n}^{s'}\}$-factorization for $(m,n)\in \{(3,5),(3,15),(5,15)\}$ with $r'+s'=14$ except possibly for $r'\in \{11,12,13\}$ when $(m, n)=(3, 5)$ and for $r'=13$ when $(m, n)=(3, 15)$ by Lemma \ref{lemma3.2}. By Lemma \ref{lemma1.3} and Theorem \ref{liu}, each $K_{(5:3)}^{*}$ has a $\{\vv{C}_{m}^{10r_j},\vv{C}_{n}^{10s_j}\}$-factorization for $j\in \{1,2, \dots,\frac{3x-3}{2}\}$, where $r_j, s_j \in \{0, 1 \}$ with $r_j+s_j=1$. Let $r=r'+10a$ and $s=s'+10b$ for nonnegative $a$ and $b$ with $a+b=\frac{3x-3}{2}$, then we have $r+s=r'+s'+10(a+b)=14+5(3x-3)=15x-1=v-1$ with $0\leq r,s\leq 15x-1$. 

We obtain the requested integer $ r\in [0, 15x-1]$ from the sum of $r'$ and $10a$ for integers $0\leq r'\leq 14$ and $0\leq a\leq \frac{3x-3}{2}$. Therefore, $\mathrm{HWP}^{*}(v; m^r, n^s)$ has a solution with $r+s=15x-1=v-1$.
\end{proof}

According to our best knowledge, our results are the first findings for the directed version of the Hamilton-Waterloo Problem. We have first examined the cases $(m, n)\in \{(4,6),(4,8),(4,16),(8,16),(3,5),(3,15),(5,15)\}$, as done in the first paper on the undirected Hamilton-Waterloo Problem by Adams et al. \cite{Adams2002}. We have also solved the problem for the cases $(m, n)\in \{(4, 12), (6, 12)\}$. In addition to studying odd cycle cases $\{(3,5),(3,15),\allowbreak(5,15)\}$, we have also observed that if $\mathrm{HWP}(v; m^{r}, n^{s})$ has a solution for odd $v$, then $\mathrm{HWP}^{*}(v; m^{2r},n^{2s})$ has a solution for the same $r$ and $s$ as well. Since there is no $2$-factorizations of $K_v$ for even $v$, we cannot arrive the similar observation when $v$ is even and $m, n > 2$. Our constructions given in Lemma \ref{mainlemma} and Lemma \ref{oddmainlemma} can also be used to solve the problem for the other cycle sizes as long as the necessary small cases can be found.

Now we can combine our results in the following main theorem.

\begin{theorem}
For nonnegative integers $r$ and $s$, $\mathrm{HWP}^{*}(v; m^{r}, n^{s})$ has a solution for
\begin{enumerate}
\item $(m,n)\in \{(4,6),(4,8),(4,12),(4,16),(6,12),(8,16)\}$ when $v$ is even,
\item $(m,n)\in \{(3,5),(3,15),(5,15)\}$ when $v$ is odd
\end{enumerate}
if and only if $r+s=v-1$ and $lcm(m,n)\vert v$ except possibly for $r\in \{11,12,13\}$ when $(v, m, n)=(15, 3, 5)$ and for $r=13$ when $(v, m, n)=(15, 3, 15)$.
\end{theorem}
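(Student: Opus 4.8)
The final theorem is a consolidation of the two main theorems already proved in the paper, so my plan is to assemble it from the pieces rather than to reprove anything from scratch. First I would establish necessity: if $\mathrm{HWP}^{*}(v; m^{r}, n^{s})$ has a solution, then Lemma~\ref{necessary} immediately gives $r+s=v-1$, and parts (1) and (2) of that lemma give $m\mid v$ (when $r>0$) and $n\mid v$ (when $s>0$), hence $\operatorname{lcm}(m,n)\mid v$ in all cases (the degenerate case $r=0$ or $s=0$ still forces divisibility by the cycle length that is actually used, and the other divisibility is vacuous or follows since we are in the range $r+s=v-1>0$). This half is common to both the even and the odd families and takes only a couple of lines.

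For sufficiency I would split into the two listed cases exactly as the theorem is stated. In case (1), $v$ is even and $(m,n)$ is one of the six even-cycle pairs; here I invoke the even main theorem of Section~3, whose proof runs through Lemma~\ref{mainlemma} together with the base cases Lemma~\ref{lemma4.8}, Lemma~\ref{lemma3}, and Lemma~\ref{lemma6} at orders $h\in\{8,12,16\}$. Concretely: writing $v=hx$ with $h=\operatorname{lcm}(m,n)$, the assumed solvability of $\mathrm{HWP}^{*}(h; m^{r'}, n^{s'})$ for all admissible $(r',s')$ (supplied by those base-case lemmata) feeds into Lemma~\ref{mainlemma} to yield solvability of $\mathrm{HWP}^{*}(hx; m^{r}, n^{s})$ for all $r+s=hx-1$. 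In case (2), $v$ is odd and $(m,n)\in\{(3,5),(3,15),(5,15)\}$, so $h=15$ and $x=v/15$ is odd; here I appeal to Theorem~\ref{thm4.3} for $v>15$ and separately record $v=15$ itself, which is Lemma~\ref{lemma3.2}. The only place the exceptional set enters is at $v=15$: Lemma~\ref{lemma3.2} leaves open $r\in\{11,12,13\}$ for $(m,n)=(3,5)$ and $r=13$ for $(m,n)=(3,15)$, and for $v>15$ Theorem~\ref{thm4.3} already produces every $r$ in range (the construction in Lemma~\ref{oddmainlemma} writes $r=r'+\tfrac{2h}{3}a=r'+10a$ with $0\le r'\le 14$, and the missing middle values of $r'$ are avoided because one can instead take a different $(r',a)$ pair to hit the same $r$). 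So the ``except possibly'' clause is precisely inherited from the $v=15$ base case and from nothing else.

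The main thing to be careful about is bookkeeping around the exceptional cases and the boundary value $v=15$: I must make sure the statement for odd $v$ genuinely covers $v=15$ (via Lemma~\ref{lemma3.2}, not Theorem~\ref{thm4.3}, which is stated only for $v>15$), and that for odd $v>15$ the undetermined triples $(15,3,5)$ and $(15,3,15)$ do \emph{not} propagate — i.e., that the decomposition of $r$ used in Lemma~\ref{oddmainlemma} never forces the offending residues $r'\in\{11,12,13\}$ once $a$ ranges over a set of size $\tfrac{3x-3}{2}\ge 3$. I expect that to be the only real subtlety; everything else is a direct citation. I would therefore write the proof as: (i) necessity from Lemma~\ref{necessary}; (ii) sufficiency for even $v$ by the Section~3 theorem (equivalently Lemma~\ref{mainlemma} plus Lemmata~\ref{lemma4.8},~\ref{lemma3},~\ref{lemma6}); (iii) sufficiency for odd $v=15$ by Lemma~\ref{lemma3.2} and for odd $v>15$ by Theorem~\ref{thm4.3}; (iv) a sentence confirming that the exceptional set is exactly the residual cases of Lemma~\ref{lemma3.2} and does not grow under the construction.
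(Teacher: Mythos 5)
At the level of citations your plan is exactly how the paper assembles this theorem: necessity from Lemma~\ref{necessary}, the even case from the Section~3 theorem (Lemma~\ref{mainlemma} with the base cases in Lemmata~\ref{lemma4.8}, \ref{lemma3}, \ref{lemma6}), and the odd case from Lemma~\ref{lemma3.2} at $v=15$ together with Theorem~\ref{thm4.3} for odd $v>15$. If you treat Theorem~\ref{thm4.3} as a black box, the assembly is formally the same as the paper's, and your necessity paragraph is at the same level of precision as the paper's own (both gloss over the degenerate situation $r=0$ or $s=0$, where only one of the two divisibilities is actually forced).

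The genuine problem is your step (iv), the parenthetical claim that for $v>15$ the exceptional residues $r'\in\{11,12,13\}$ can always be avoided by choosing a different pair $(r',a)$ with $r=r'+10a$. That claim is false at the top of the range. Write $v=15x$ with $x>1$ odd; in Lemma~\ref{oddmainlemma} one has $0\le r'\le 14$ and $0\le a\le\frac{3x-3}{2}$, so $10a\le 15x-15=v-15$ and hence $r'=r-10a\ge r-(v-15)$. For $r=v-2,\,v-3,\,v-4$ this forces $r'\ge 13,12,11$ respectively, and since $v\equiv 5\pmod{10}$ the residue condition then pins $r'$ to exactly $13,12,11$; the alternative $r'=r\bmod 10\in\{3,2,1\}$ would require $10a\ge v-5$, which exceeds the bound. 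Concretely, $\mathrm{HWP}^{*}(45;3^{43},5^{1})$ is reachable through this construction only via the unresolved base case $\mathrm{HWP}^{*}(15;3^{13},5^{1})$, and the same happens for $r\in\{v-4,v-3,v-2\}$ when $(m,n)=(3,5)$ and for $r=v-2$ when $(m,n)=(3,15)$, for every odd $v=15x>15$. So the confirming sentence you propose in (iv) cannot be written as stated: non-propagation of the exceptions is not automatic, and closing it genuinely requires either settling the missing cases of Lemma~\ref{lemma3.2} or supplying mixed $\{\vv{C}_{m},\vv{C}_{n}\}$-factorizations of $K^{*}_{(5:3)}$ (the paper's own proof of Theorem~\ref{thm4.3} asserts coverage of all $r\in[0,15x-1]$ without addressing this point, so the verification you set yourself would in fact expose a gap there rather than merely confirm the bookkeeping).
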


%\footnotesize %
%\tiny
\scriptsize
%\fontsize{7pt}{7pt}
\section{Appendix}\label{secA1}

Let $V(K_{8}^*)= \mathbb{Z}_{8}$, $V(K_{12}^*)= \mathbb{Z}_{12}$, $V(K_{16}^*)= \mathbb{Z}_{16}$ and $V(K_{15}^*)= \mathbb{Z}_{15}$,

\begin{enumerate}[leftmargin=*,align=left]

\item HWP$^*(8;4^1,8^{6})${,}\\
$[(0{,}3{,}2{,}1){,}(4{,}7{,}6{,}5)],[(0{,}3{,}2{,}4{,}6{,}5{,}7{,}1)],[(0{,}2{,}1{,}3{,}5{,}4{,}7{,}6)],[(0{,}7{,}5{,}3{,}6{,}1{,}4{,}2)],[(0{,}6{,}4{,}3{,}1{,}7{,}2{,}5)],$\\
$[(0{,}5{,}1{,}6{,}2{,}7{,}3{,}4)],[(0{,}4{,}1{,}5{,}2{,}6{,}3{,}7)]$
%--------------------------------------------------
\item HWP$^*(8;4^2,8^{5})${,}\\
$[(0{,}2{,}7{,}6){,}(1{,}4{,}5{,}3)],[(0{,}5{,}6{,}7){,}(1{,}3{,}2{,}4)],[(0{,}1{,}7{,}2{,}6{,}3{,}5{,}4)],[(0{,}3{,}7{,}4{,}6{,}5{,}1{,}2)]{,}[(0{,}4{,}2{,}5{,}7{,}3{,}6{,}1)],$\\
$[(0{,}6{,}2{,}3{,}4{,}7{,}1{,}5)],[(0{,}7{,}5{,}2{,}1{,}6{,}4{,}3)]$
%--------------------------------------------------
\item HWP$^*(8;4^3,8^{4})${,}\\
$[(0{,}1{,}2{,}3){,}(4{,}5{,}6{,}7)],[(0{,}2{,}4{,}6){,}(1{,}3{,}5{,}7)],[(0{,}3{,}2{,}1){,}(4{,}7{,}6{,}5)],[(0{,}4{,}1{,}5{,}2{,}6{,}3{,}7)],$\\
$[(0{,}4{,}2{,}5{,}7{,}3{,}6{,}1)],[(0{,}6{,}4{,}3{,}1{,}7{,}2{,}5)],[(0{,}7{,}5{,}3{,}6{,}1{,}4{,}2)]$
%--------------------------------------------------
\item HWP$^*(8;4^4,8^{3})${,}\\
$[(0{,}3{,}7{,}4){,}(1{,}2{,}6{,}5)],[(0{,}1{,}7{,}2){,}(3{,}5{,}4{,}6)],[(0{,}2{,}7{,}6){,}(1{,}4{,}5{,}3)],[(0{,}5{,}6{,}7){,}(1{,}3{,}2{,}4)],$\\
$[(0{,}4{,}2{,}5{,}7{,}3{,}6{,}1)],[(0{,}6{,}2{,}3{,}4{,}7{,}1{,}5)],[(0{,}7{,}5{,}2{,}1{,}6{,}4{,}3)]$
%--------------------------------------------------
\item HWP$^*(8;4^5,8^{2})${,}\\
$[(0{,}1{,}2{,}3){,}(4{,}5{,}6{,}7)],[(0{,}2{,}4{,}6){,}(1{,}3{,}5{,}7)],[(0{,}3{,}2{,}1){,}(4{,}7{,}6{,}5)],[(0{,}6{,}4{,}2){,}(1{,}7{,}5{,}3)]{,}$\\
$[(0{,}7{,}2{,}5){,}(4{,}3{,}6{,}1)],[(0{,}4{,}1{,}5{,}2{,}6{,}3{,}7)],[(0{,}5{,}1{,}6{,}2{,}7{,}3{,}4)]$
%--------------------------------------------------
\item HWP$^*(8;4^6,8^{1})${,}\\
$[(0{,}2{,}1{,}3){,}(4{,}7{,}6{,}5)],[(0{,}3{,}7{,}4){,}(1{,}5{,}2{,}6)],[(0{,}4{,}1{,}6){,}(2{,}7{,}5{,}3)],[(0{,}5{,}7{,}1){,}(2{,}4{,}3{,}6)]{,}$\\
$[(0{,}6{,}4{,}2){,}(1{,}7{,}3{,}5)],[(0{,}7{,}2{,}5){,}(1{,}4{,}6{,}3)],[(0{,}1{,}2{,}3{,}4{,}5{,}6{,}7)]$
%--------------------------------------------------
\item HWP$^*(12;4^1,6^{10})${,}\\
$[(0{,}2{,}1{,}3{,}4{,}6){,}(5{,}7{,}8{,}10{,}9{,}11)],[(0{,}3{,}1{,}4{,}2{,}5){,}(6{,}8{,}11{,}9{,}7{,}10)],[(0{,}4{,}1{,}5{,}2{,}7){,}(3{,}9{,}6{,}11{,}10{,}8)],$\\
$[(0{,}6{,}1{,}7{,}2{,}9){,}(3{,}8{,}4{,}10{,}5{,}11)],[(0{,}7{,}1{,}8{,}2{,}10){,}(3{,}11{,}6{,}5{,}9{,}4)],[(0{,}8{,}6{,}3{,}2{,}4){,}(1{,}9{,}5{,}10{,}7{,}11)],$\\
$[(0{,}9{,}1{,}10{,}2{,}11){,}(3{,}7{,}6{,}4{,}8{,}5)],[(0{,}10{,}3{,}6{,}9{,}2),(1{,}11{,}4{,}7{,}5{,}8)],[(0{,}11{,}2{,}6{,}10{,}1){,}(3{,}5{,}4{,}9{,}8{,}7)],$\\
$[(0{,}1{,}2{,}3){,}(4{,}5{,}6{,}7){,}(8{,}9{,}10{,}11)]$
%--------------------------------------------------
\item HWP$^*(12;4^2,6^{9})${,}\\
$[(0{,}3{,}1{,}5{,}2{,}6){,}(4{,}7{,}10{,}8{,}11{,}9)],[(0{,}4{,}1{,}3{,}2{,}5){,}(6{,}8{,}10{,}9{,}7{,}11)],[0{,}5{,}1{,}6{,}2{,}8){,}(3{,}10{,}4{,}11{,}7{,}9)],$\\
$[(0{,}6{,}1{,}7{,}2{,}9){,}(3{,}8{,}4{,}10{,}5{,}11)],[(0{,}7{,}1{,}9{,}6{,}10){,}(2{,}4{,}3{,}11{,}5{,}8)],[(0{,}8{,}1{,}11{,}2{,}7){,}(3{,}6{,}4{,}9{,}5{,}10)],$\\
$[(0{,}9{,}1{,}10{,}2{,}11){,}(3{,}4{,}8{,}7{,}6{,}5)],[(0{,}10{,}7{,}3{,}9{,}2){,}(1{,}8{,}5{,}4{,}6{,}11)],[(0{,}11{,}4{,}2{,}10{,}1){,}(3{,}7{,}5{,}9{,}8{,}6)],$\\
$[(0{,}1{,}2{,}3){,}(4{,}5{,}6{,}7){,}(8{,}9{,}10{,}11)],[(0{,}2{,}1{,}4){,}(3{,}5{,}7{,}8){,}(6{,}9{,}11{,}10)]$
%--------------------------------------------------
\item HWP$^*(12;4^3,6^{8})${,}\\
$[(0{,}3{,}1{,}6{,}2{,}7){,}(4{,}8{,}10{,}5{,}11{,}9)],[(0{,}5{,}1{,}3{,}2{,}8){,}(4{,}6{,}10{,}9{,}7{,}11)],[(0{,}11{,}1{,}10{,}8{,}2){,}(3{,}4{,}7{,}6{,}5{,}9)],$\\
$[(0{,}6{,}1{,}7{,}2{,}9){,}(3{,}8{,}4{,}11{,}5{,}10)],[(0{,}7{,}1{,}11{,}2{,}10){,}(3{,}9{,}6{,}8{,}5{,}4)],[(0{,}8{,}7{,}3{,}11{,}6){,}(1{,}9{,}5{,}2{,}4{,}10)],$\\
$[(0{,}9{,}1{,}8{,}6{,}11){,}(2{,}5{,}3{,}7{,}10{,}4)],[(0{,}10{,}7{,}5{,}8{,}1){,}(2{,}11{,}3{,}6{,}4{,}9)],[(0{,}1{,}2{,}3){,}(4{,}5{,}6{,}7){,}(8{,}9{,}10{,}11)],$\\
$[(0{,}2{,}1{,}4){,}(3{,}5{,}7{,}8){,}(6{,}9{,}11{,}10)],[(0{,}4{,}1{,}5){,}(2{,}6{,}3{,}10){,}(7{,}9{,}8{,}11)]$
%--------------------------------------------------
\item HWP$^*(12;4^4,6^{7})${,}\\
$[(0{,}3{,}1{,}7{,}2{,}8){,}(4{,}6{,}10{,}5{,}11{,}9)],[(0{,}6{,}1{,}3{,}2{,}9){,}(4{,}11{,}5{,}8{,}7{,}10)],[(0{,}7{,}1{,}8{,}4{,}10){,}(2{,}11{,}6{,}5{,}3{,}9)],$\\
$[(0{,}8{,}6{,}4{,}3{,}11){,}(1{,}9{,}7{,}5{,}2{,}10)],[(0{,}9{,}5{,}10{,}8{,}1){,}(2{,}7{,}6{,}11{,}3{,}4)],[(0{,}10{,}7{,}3{,}6{,}2){,}(1{,}11{,}4{,}8{,}5{,}9)],$\\
$[(0{,}11{,}1{,}10{,}3{,}7){,}(2{,}5{,}4{,}9{,}6{,}8)],[(0{,}1{,}2{,}3){,}(4{,}5{,}6{,}7){,}(8{,}9{,}10{,}11)],[(0{,}2{,}1{,}4){,}(3{,}5{,}7{,}8){,}(6{,}9{,}11{,}10)],$\\
$[(0{,}4{,}1{,}5){,}(2{,}6{,}3{,}10){,}(7{,}9{,}8{,}11)],[(0{,}5{,}1{,}6){,}(2{,}4{,}7{,}11){,}(3{,}8{,}10{,}9)]$
%--------------------------------------------------
\item HWP$^*(12;4^5,6^{6})${,}\\
$[(0{,}3{,}1{,}8{,}4{,}2){,}(5{,}11{,}9{,}6{,}10{,}7)],[(0{,}7{,}1{,}9{,}2{,}8){,}(3{,}6{,}11{,}5{,}10{,}4)],[(0{,}8{,}5{,}9{,}1{,}10){,}(2{,}7{,}6{,}4{,}11{,}3)],$\\
$[(0{,}9{,}7{,}10{,}1{,}11){,}(2{,}5{,}3{,}4{,}8{,}6)],[(0{,}10{,}5{,}2{,}11{,}1){,}(3{,}9{,}4{,}6{,}8{,}7)],[(0{,}11{,}6{,}5{,}4{,}9){,}(1{,}3{,}7{,}2{,}10{,}8)],$\\
$[(0{,}1{,}2{,}3){,}(4{,}5{,}6{,}7){,}(8{,}9{,}10{,}11)],[(0{,}2{,}1{,}4){,}(3{,}5{,}7{,}8){,}(6{,}9{,}11{,}10)],[(0{,}4{,}1{,}5){,}(2{,}6{,}3{,}10){,}(7{,}9{,}8{,}11)],$\\
$[(0{,}5{,}1{,}6){,}(2{,}4{,}7{,}11){,}(3{,}8{,}10{,}9)],[(0{,}6{,}1{,}7){,}(2{,}9{,}5{,}8){,}(3{,}11{,}4{,}10)]$
%--------------------------------------------------
\item HWP$^*(12;4^6,6^{5})${,}\\
$[(0{,}3{,}1{,}9{,}6{,}8){,}(2{,}7{,}10{,}4{,}11{,}5)],[(0{,}8{,}7{,}6{,}11{,}9){,}(1{,}3{,}4{,}2{,}5{,}10)],[(0{,}9{,}7{,}5{,}11{,}1){,}(2{,}10{,}8{,}6{,}4{,}3)],$\\
$[(0{,}10{,}7{,}3{,}9{,}2){,}(1{,}11{,}6{,}5{,}4{,}8)],[(0{,}11{,}3{,}7{,}1{,}10){,}(2{,}8{,}5{,}9{,}4{,}6)],[(0{,}1{,}2{,}3){,}(4{,}5{,}6{,}7){,}(8{,}9{,}10{,}11)],$\\
$[(0{,}2{,}1{,}4){,}(3{,}5{,}7{,}8){,}(6{,}9{,}11{,}10)],[(0{,}4{,}1{,}5){,}(2{,}6{,}3{,}10){,}(7{,}9{,}8{,}11)],[(0{,}5{,}1{,}6){,}(2{,}4{,}7{,}11){,}(3{,}8{,}10{,}9)],$\\
$[(0{,}6{,}1{,}7){,}(2{,}9{,}5{,}8){,}(3{,}11{,}4{,}10)],[(0{,}7{,}2{,}11){,}(1{,}8{,}4{,}9){,}(3{,}6{,}10{,}5)]$
%--------------------------------------------------
\item HWP$^*(12;4^7,6^{4})${,}\\
$[(0{,}3{,}9{,}2{,}7{,}1){,}(4{,}11{,}5{,}10{,}8{,}6)],[(0{,}9{,}4{,}3{,}2{,}8){,}(1{,}10{,}7{,}6{,}5{,}11)],[(0{,}10{,}4{,}2{,}5{,}9){,}(1{,}11{,}6{,}8{,}7{,}3)],$\\
$[(0{,}11{,}9{,}6{,}2{,}10){,}(1{,}3{,}7{,}5{,}4{,}8)],[(0{,}1{,}2{,}3){,}(4{,}5{,}6{,}7){,}(8{,}9{,}10{,}11)],[(0{,}2{,}1{,}4){,}(3{,}5{,}7{,}8){,}(6{,}9{,}11{,}10)],$\\
$[(0{,}4{,}1{,}5){,}(2{,}6{,}3{,}10){,}(7{,}9{,}8{,}11)],[(0{,}5{,}1{,}6){,}(2{,}4{,}7{,}11){,}(3{,}8{,}10{,}9)],[(0{,}6{,}1{,}7){,}(2{,}9{,}5{,}8){,}(3{,}11{,}4{,}10)],$\\
$[(0{,}7{,}2{,}11){,}(1{,}8{,}4{,}9){,}(3{,}6{,}10{,}5)],[0{,}8{,}5{,}2){,}(1{,}9{,}7{,}10){,}(3{,}4{,}6{,}11)]$
%--------------------------------------------------
\item HWP$^*(12;4^8{,}6^{3})${,}\\
$[(0{,}3{,}9{,}2{,}7{,}1){,}(4{,}11{,}5{,}10{,}8{,}6)],[(0{,}9{,}4{,}3{,}2{,}8){,}(1{,}10{,}7{,}6{,}5{,}11)],[(0{,}10{,}4{,}2{,}5{,}9){,}(1{,}11{,}6{,}8{,}7{,}3)],$\\
$[(0{,}1{,}2{,}3){,}(4{,}5{,}6{,}7){,}(8{,}9{,}10{,}11)],[(0{,}2{,}1{,}4){,}(3{,}5{,}7{,}8){,}(6{,}11{,}10{,}9)],[(0{,}4{,}1{,}6){,}(2{,}11{,}9{,}7){,}(3{,}8{,}10{,}5)],$\\
$[(0{,}5{,}1{,}7){,}(2{,}6{,}9{,}8){,}(3{,}11{,}4{,}10)],[(0{,}6{,}2{,}10){,}(1{,}9{,}5{,}8){,}(3{,}4{,}7{,}11)],[(0{,}7{,}9{,}11){,}(1{,}3{,}6{,}10){,}(2{,}4{,}8{,}5)],$\\
$[(0{,}8{,}11{,}2){,}(1{,}5{,}4{,}9){,}(3{,}7{,}10{,}6)],[(0{,}11{,}7{,}5){,}(1{,}8{,}4{,}6){,}(2{,}9{,}3{,}10)]$
%--------------------------------------------------
\item HWP$^*(12;4^{9},6^{2})${,}\\
$[(0{,}3{,}9{,}2{,}7{,}1){,}(4{,}11{,}5{,}10{,}8{,}6)],[(0{,}9{,}4{,}3{,}2{,}8){,}(1{,}10{,}7{,}6{,}5{,}11)],[(0{,}1{,}2{,}3){,}(4{,}5{,}6{,}7){,}(8{,}9{,}10{,}11)],$\\
$[(0{,}2{,}1{,}4){,}(3{,}5{,}7{,}8){,}(6{,}9{,}11{,}10)],[(0{,}4{,}1{,}5){,}(2{,}6{,}3{,}10){,}(7{,}9{,}8{,}11)],[(0{,}5{,}1{,}7){,}(2{,}10{,}4{,}8){,}(3{,}6{,}11{,}9)],$\\
$[(0{,}6{,}2{,}11){,}(1{,}9{,}7{,}3){,}(4{,}10{,}5{,}8)],[(0{,}7{,}11{,}2){,}(1{,}8{,}5{,}9){,}(3{,}4{,}6{,}10)],[(0{,}8{,}7{,}10){,}(1{,}3{,}11{,}6){,}(2{,}9{,}5{,}4)],$\\
$[(0{,}10{,}9{,}6){,}(1{,}11{,}3{,}8){,}(2{,}4{,}7{,}5)],[(0{,}11{,}4{,}9){,}(1{,}6{,}8{,}10){,}(2{,}5{,}3{,}7)]$
%--------------------------------------------------
\item HWP$^*(12;4^{10},6^{1})${,}\\
$[(0{,}3{,}9{,}2{,}7{,}1){,}(4{,}11{,}5{,}10{,}8{,}6)],[(0{,}1{,}2{,}3){,}(4{,}5{,}6{,}7){,}(8{,}9{,}10{,}11)],[(0{,}2{,}1{,}4){,}(3{,}5{,}7{,}6){,}(8{,}11{,}10{,}9)],$\\
$[(0{,}4{,}1{,}5){,}(2{,}6{,}8{,}10){,}(3{,}7{,}9{,}11)],[(0{,}5{,}1{,}6){,}(2{,}8{,}3{,}11){,}(4{,}9{,}7{,}10)],[(0{,}6{,}5{,}2){,}(1{,}9{,}3{,}10){,}(4{,}8{,}7{,}11)],$\\
$[(0{,}7{,}2{,}10){,}(1{,}8{,}5{,}3){,}(4{,}6{,}11{,}9)],[(0{,}8{,}1{,}11){,}(2{,}5{,}9{,}6){,}(3{,}4{,}10{,}7)],[(0{,}9{,}1{,}7){,}(2{,}4{,}3{,}8){,}(5{,}11{,}6{,}10)],$\\
$[(0{,}10{,}6{,}9){,}(1{,}3{,}2{,}11){,}(4{,}7{,}5{,}8)],[(0{,}11{,}7{,}8){,}(1{,}10{,}3{,}6){,}(2{,}9{,}5{,}4)]$
%--------------------------------------------------
\item HWP$^*(12;4^1,12^{10})${,}\\
$[(0{,}1{,}2{,}3){,}(4{,}5{,}6{,}7){,}(8{,}9{,}10{,}11)],[(0{,}2{,}1{,}3{,}4{,}6{,}5{,}7{,}8{,}10{,}9{,}11)],[(0{,}3{,}1{,}4{,}2{,}5{,}8{,}6{,}9{,}7{,}11{,}10)]{,}$\\
$[(0{,}4{,}1{,}5{,}2{,}6{,}3{,}7{,}10{,}8{,}11{,}9)],[(0{,}5{,}1{,}6{,}2{,}4{,}10{,}3{,}11{,}7{,}9{,}8)],[(0{,}6{,}1{,}8{,}2{,}9{,}3{,}10{,}5{,}11{,}4{,}7)]{,}$\\
$[(0{,}7{,}1{,}9{,}2{,}8{,}3{,}5{,}10{,}4{,}11{,}6)],[(0{,}8{,}1{,}7{,}3{,}9{,}6{,}10{,}2{,}11{,}5{,}4)],[(0{,}9{,}4{,}8{,}5{,}3{,}6{,}11{,}1{,}10{,}7{,}2)]{,}$\\
$[(0{,}10{,}1{,}11{,}3{,}2{,}7{,}6{,}8{,}4{,}9{,}5)],[(0{,}11{,}2{,}10{,}6{,}4{,}3{,}8{,}7{,}5{,}9{,}1)]$
%--------------------------------------------------
\item HWP$^*(12;4^2,12^{9})${,}\\
$[(0{,}1{,}2{,}3){,}(4{,}5{,}6{,}7){,}(8{,}9{,}10{,}11)],[(0{,}2{,}4{,}6){,}(8{,}10{,}1{,}3){,}(5{,}7{,}9{,}11)],[(0{,}3{,}1{,}4{,}2{,}5{,}8{,}6{,}9{,}7{,}11{,}10)],$\\
$[(0{,}4{,}1{,}5{,}2{,}6{,}3{,}7{,}10{,}8{,}11{,}9)],[(0{,}5{,}1{,}6{,}2{,}7{,}3{,}10{,}9{,}8{,}4{,}11)],[(0{,}6{,}1{,}7{,}2{,}9{,}5{,}10{,}3{,}11{,}4{,}8)],$\\
$[(0{,}7{,}1{,}8{,}2{,}10{,}4{,}9{,}6{,}11{,}3{,}5)],[(0{,}8{,}1{,}9{,}2{,}11{,}6{,}5{,}3{,}4{,}10{,}7)],[(0{,}9{,}1{,}11{,}7{,}8{,}5{,}4{,}3{,}6{,}10{,}2)],$\\
$[(0{,}10{,}6{,}8{,}3{,}9{,}4{,}7{,}5{,}11{,}2{,}1)],[(0{,}11{,}1{,}10{,}5{,}9{,}3{,}2{,}8{,}7{,}6{,}4)]$
%--------------------------------------------------
\item HWP$^*(12;4^3,12^{8})${,}\\
$[(0{,}1{,}2{,}3){,}(4{,}5{,}6{,}7){,}(8{,}9{,}10{,}11)],[(0{,}2{,}4{,}6){,}(8{,}10{,}1{,}3){,}(5{,}7{,}9{,}11)],[(0{,}3{,}6{,}1){,}(7{,}10{,}2{,}8){,}(4{,}9{,}5{,}11)],$\\
$[(0{,}4{,}1{,}5{,}2{,}6{,}3{,}7{,}8{,}11{,}10{,}9)],[(0{,}5{,}1{,}4{,}2{,}7{,}3{,}10{,}6{,}11{,}9{,}8)],[(0{,}6{,}2{,}1{,}7{,}5{,}9{,}4{,}10{,}8{,}3{,}11)],$\\
$[(0{,}7{,}1{,}6{,}5{,}8{,}4{,}11{,}3{,}9{,}2{,}10)],[(0{,}8{,}1{,}9{,}3{,}2{,}5{,}10{,}7{,}11{,}6{,}4)],[(0{,}9{,}1{,}8{,}2{,}11{,}7{,}6{,}10{,}4{,}3{,}5)],$\\
$[(0{,}10{,}3{,}1{,}11{,}2{,}9{,}6{,}8{,}5{,}4{,}7)],[(0{,}11{,}1{,}10{,}5{,}3{,}4{,}8{,}6{,}9{,}7{,}2)]$
%--------------------------------------------------
\item HWP$^*(12;4^4,12^{7})${,}\\
$[(0{,}1{,}2{,}3){,}(4{,}5{,}6{,}7){,}(8{,}9{,}10{,}11)],[(0{,}2{,}4{,}6){,}(8{,}10{,}1{,}3){,}(5{,}7{,}9{,}11)],[(0{,}3{,}6{,}1){,}(7{,}10{,}2{,}8){,}(4{,}9{,}5{,}11)],$\\
$[(0{,}4{,}1{,}10){,}(2{,}5{,}3{,}9){,}(11{,}7{,}6{,}8)],[(0{,}5{,}1{,}4{,}2{,}6{,}3{,}7{,}11{,}10{,}9{,}8)],[(0{,}6{,}2{,}1{,}5{,}4{,}10{,}7{,}8{,}3{,}11{,}9)],$\\
$[(0{,}7{,}1{,}6{,}4{,}11{,}2{,}9{,}3{,}10{,}8{,}5)],[(0{,}8{,}1{,}7{,}2{,}10{,}4{,}3{,}5{,}9{,}6{,}11)],[(0{,}9{,}1{,}11{,}6{,}10{,}5{,}8{,}4{,}7{,}3{,}2)],$\\
$[(0{,}10{,}3{,}4{,}8{,}6{,}5{,}2{,}11{,}1{,}9{,}7)],[(0{,}11{,}3{,}1{,}8{,}2{,}7{,}5{,}10{,}6{,}9{,}4)]$
%--------------------------------------------------
\item HWP$^*(12;4^5,12^{6})${,}\\
$[(0{,}1{,}2{,}3){,}(4{,}5{,}6{,}7){,}(8{,}9{,}10{,}11)],[(0{,}2{,}4{,}6){,}(8{,}10{,}1{,}3){,}(5{,}7{,}9{,}11)],[(0{,}3{,}6{,}1){,}(7{,}10{,}2{,}8){,}(4{,}9{,}5{,}11)],$\\
$[(0{,}4{,}1{,}10){,}(2{,}5{,}3{,}9){,}(11{,}7{,}6{,}8)],[(0{,}8{,}3{,}7){,}(9{,}6{,}11{,}2){,}(1{,}4{,}10{,}5)],[(0{,}5{,}2{,}1{,}6{,}3{,}4{,}7{,}11{,}10{,}9{,}8)],$\\
 $[(0{,}6{,}2{,}7{,}1{,}5{,}9{,}3{,}10{,}8{,}4{,}11)],[(0{,}7{,}2{,}6{,}4{,}8{,}5{,}10{,}3{,}11{,}1{,}9)],[(0{,}9{,}1{,}7{,}8{,}6{,}10{,}4{,}2{,}11{,}3{,}5)],$\\
$[(0{,}10{,}7{,}5{,}8{,}1{,}11{,}6{,}9{,}4{,}3{,}2)],[(0{,}11{,}9{,}7{,}3{,}1{,}8{,}2{,}10{,}6{,}5{,}4)]$
%--------------------------------------------------
\item HWP$^*(12;4^6,12^{5})${,}\\
$[(0{,}1{,}2{,}3){,}(4{,}5{,}6{,}7){,}(8{,}9{,}10{,}11)],[(0{,}2{,}4{,}6){,}(8{,}10{,}1{,}3){,}(5{,}7{,}9{,}11)],[(0{,}3{,}6{,}1){,}(7{,}10{,}2{,}8){,}(4{,}9{,}5{,}11)],$\\
$[(0{,}4{,}1{,}10){,}(2{,}5{,}3{,}9){,}(11{,}7{,}6{,}8)],[(0{,}8{,}3{,}7){,}(9{,}6{,}11{,}2){,}(1{,}4{,}10{,}5)],[(0{,}9{,}3{,}2){,}(1{,}5{,}10{,}8){,}(11{,}6{,}4{,}7)],$\\
$[(0{,}5{,}2{,}1{,}6{,}3{,}4{,}11{,}10{,}9{,}7{,}8)],[(0{,}6{,}2{,}7{,}1{,}11{,}3{,}10{,}4{,}8{,}5{,}9)],[(0{,}7{,}2{,}10{,}3{,}11{,}1{,}9{,}8{,}6{,}5{,}4)],$\\ 
$[(0{,}10{,}6{,}9{,}1{,}7{,}3{,}5{,}8{,}4{,}2{,}11)],[(0{,}11{,}9{,}4{,}3{,}1{,}8{,}2{,}6{,}10{,}7{,}5)]$
%--------------------------------------------------
\item HWP$^*(12;4^{7},12^{4})${,}\\
$[(0{,}3{,}2{,}1){,}(4{,}7{,}6{,}9){,}(5{,}10{,}8{,}11)],[(0{,}4{,}8{,}3){,}(1{,}7{,}9{,}6){,}(2{,}5{,}11{,}10)],[(0{,}5{,}8{,}7){,}(1{,}9{,}2{,}11){,}(3{,}6{,}4{,}10)],$\\
$[(0{,}6{,}2{,}10){,}(1{,}11{,}8{,}5){,}(3{,}9{,}7{,}4)],[(0{,}7{,}5{,}4){,}(1{,}8{,}2{,}9){,}(3{,}10{,}6{,}11)],[(0{,}9{,}5{,}2){,}(1{,}6{,}3{,}8){,}(4{,}11{,}7{,}10)],$\\
$[(0{,}10{,}9{,}8{,}){,}(1{,}4{,}2{,}7){,}(3{,}11{,}6{,}5)],[(0{,}11{,}4{,}1{,}10{,}5{,}9{,}3{,}7{,}2{,}8{,}6)],[(0{,}8{,}4{,}9{,}11{,}2{,}6{,}10{,}7{,}3{,}1{,}5)],$\\
$[(0{,}1{,}2{,}3{,}4{,}5{,}6{,}7{,}8{,}9{,}10{,}11)],[(0{,}2{,}4{,}6{,}8{,}10{,}1{,}3{,}5{,}7{,}11{,}9)]$
%--------------------------------------------------
\item HWP$^*(12;4^{8},12^{3})${,}\\
$[(0{,}3{,}2{,}1){,}(4{,}7{,}5{,}9){,}(6{,}11{,}10{,}8)],[(0{,}4{,}1{,}6){,}(2{,}5{,}3{,}10){,}(7{,}9{,}8{,}11)],[(0{,}5{,}2{,}7){,}(1{,}10{,}4{,}8){,}(3{,}11{,}6{,}9)],$\\
$[(0{,}6{,}4{,}2){,}(1{,}11{,}8{,}7){,}(3{,}9{,}5{,}10)],[(0{,}7{,}4{,}3){,}(1{,}8{,}5{,}11){,}(2{,}10{,}9{,}6)],[(0{,}9{,}2{,}8){,}(1{,}4{,}11{,}5){,}(3{,}7{,}10{,}6)],$\\
$[(0{,}10{,}5{,}4){,}(1{,}9{,}7{,}6){,}(2{,}11{,}3{,}8)],[(0{,}11{,}4{,}10){,}(1{,}7{,}2{,}9){,}(3{,}6{,}5{,}8)],[(0{,}8{,}4{,}9{,}11{,}2{,}6{,}10{,}7{,}3{,}1{,}5)],$\\
$[(0{,}1{,}2{,}3{,}4{,}5{,}6{,}7{,}8{,}9{,}10{,}11)],[(0{,}2{,}4{,}6{,}8{,}10{,}1{,}3{,}5{,}7{,}11{,}9)]$
%--------------------------------------------------
\item HWP$^*(12;4^{9},12^{2})${,}\\
$[(0{,}3{,}1{,}4){,}(2{,}5{,}8{,}6){,}(7{,}9{,}11{,}10)],[(0{,}4{,}1{,}5){,}(2{,}6{,}3{,}9){,}(7{,}10{,}8{,}11)],[(0{,}5{,}1{,}6){,}(2{,}7{,}3{,}11){,}(4{,}10{,}9{,}8)],$\\
$[(0{,}6{,}1{,}8){,}(2{,}10{,}3{,}7){,}(4{,}9{,}5{,}11)],[(0{,}7{,}5{,}10){,}(1{,}11{,}8{,}2){,}(3{,}6{,}9{,}4)],[(0{,}8{,}3{,}2){,}(1{,}7{,}4{,}11){,}(5{,}9{,}6{,}10)],$\\
$[(0{,}9{,}7{,}1){,}(2{,}8{,}5{,}4){,}(3{,}10{,}6{,}11)],[(0{,}10{,}4{,}7){,}(1{,}9{,}3{,}8){,}(2{,}11{,}6{,}5)],[(0{,}11{,}5{,}3){,}(1{,}10{,}2{,}9){,}(4{,}8{,}7{,}6)],$\\
$[(0{,}1{,}2{,}3{,}4{,}5{,}6{,}7{,}8{,}9{,}10{,}11)],[(0{,}2{,}4{,}6{,}8{,}10{,}1{,}3{,}5{,}7{,}11{,}9)]$
%--------------------------------------------------
\item HWP$^*(12;4^{10},12^{1})${,}\\
$[(0{,}2{,}1{,}3){,}(4{,}6{,}5{,}7){,}(8{,}10{,}9{,}11)],[(0{,}3{,}1{,}4){,}(2{,}5{,}8{,}11){,}(6{,}9{,}7{,}10)],[(0{,}4{,}1{,}5){,}(2{,}6{,}3{,}9){,}(7{,}11{,}10{,}8)],$\\
$[(0{,}5{,}1{,}6){,}(2{,}4{,}10{,}7){,}(3{,}11{,}9{,}8)],[(0{,}6{,}2{,}7){,}(1{,}9{,}4{,}11){,}(3{,}8{,}5{,}10)],[(0{,}7{,}1{,}8){,}(2{,}11{,}6{,}10){,}(3{,}5{,}4{,}9)],$\\
$[(0{,}8{,}1{,}10){,}(2{,}9{,}5{,}3){,}(4{,}7{,}6{,}11)],[(0{,}9{,}6{,}1){,}(2{,}10{,}4{,}8){,}(3{,}7{,}5{,}11)],[(0{,}10{,}5{,}2){,}(1{,}11{,}7{,}9){,}(3{,}6{,}8{,}4)],$\\
$[(0{,}11{,}5{,}9){,}(1{,}7{,}3{,}10){,}(2{,}8{,}6{,}4)],[(0{,}1{,}2{,}3{,}4{,}5{,}6{,}7{,}8{,}9{,}10{,}11)]$
%--------------------------------------------------
\item HWP$^*(12;6^{1},12^{10})${,}\\
$[(0{,}3{,}9{,}4{,}11{,}1){,}(2{,}5{,}10{,}8{,}7{,}6)],[(0{,}1{,}2{,}3{,}4{,}5{,}6{,}7{,}8{,}9{,}10{,}11)],[(0{,}2{,}1{,}3{,}5{,}4{,}6{,}8{,}10{,}7{,}11{,}9)],$\\
$[(0{,}4{,}1{,}5{,}2{,}6{,}3{,}7{,}9{,}8{,}11{,}10)],[(0{,}5{,}1{,}4{,}2{,}7{,}3{,}6{,}10{,}9{,}11{,}8)],[(0{,}6{,}1{,}7{,}10{,}2{,}8{,}3{,}11{,}4{,}9{,}5)],$\\
$[(0{,}7{,}1{,}6{,}9{,}2{,}10{,}4{,}8{,}5{,}11{,}3)],[(0{,}8{,}1{,}9{,}3{,}2{,}4{,}10{,}6{,}11{,}5{,}7)],[(0{,}9{,}7{,}2{,}11{,}6{,}5{,}3{,}10{,}1{,}8{,}4)],$\\
$[(0{,}10{,}3{,}8{,}6{,}4{,}7{,}5{,}9{,}1{,}11{,}2)],[(0{,}11{,}7{,}4{,}3{,}1{,}10{,}5{,}8{,}2{,}9{,}6)]$
 %--------------------------------------------------
\item HWP$^*(12;6^{2},12^{9})${,}\\
$[(0{,}3{,}9{,}4{,}11{,}1){,}(2{,}5{,}10{,}8{,}7{,}6)],[(0{,}4{,}7{,}1{,}9{,}2){,}(3{,}10{,}6{,}11{,}8{,}5)],[( 0{,}1{,}2{,}3{,}4{,}5{,}6{,}7{,}8{,}9{,}10{,}11)],$\\
$[(0{,}2{,}1{,}3{,}5{,}4{,}6{,}8{,}10{,}7{,}11{,}9)],[(0{,}5{,}1{,}4{,}2{,}6{,}3{,}7{,}9{,}8{,}11{,}10)],[(0{,}6{,}1{,}5{,}2{,}4{,}9{,}11{,}7{,}10{,}3{,}8)],$\\
$[(0{,}7{,}2{,}8{,}1{,}6{,}4{,}10{,}9{,}3{,}11{,}5)],[(0{,}8{,}2{,}7{,}3{,}6{,}9{,}1{,}10{,}5{,}11{,}4)],[(0{,}9{,}5{,}7{,}4{,}1{,}8{,}6{,}10{,}2{,}11{,}3)],$\\
$[(0{,}10{,}1{,}11{,}6{,}5{,}8{,}4{,}3{,}2{,}9{,}7)],[(0{,}11{,}2{,}10{,}4{,}8{,}3{,}1{,}7{,}5{,}9{,}6)]$
%--------------------------------------------------
\item HWP$^*(12;6^{3},12^{8})${,}\\
$[(0{,}3{,}9{,}4{,}11{,}1){,}(2{,}5{,}10{,}8{,}7{,}6)],[(0{,}4{,}7{,}1{,}9{,}2){,}(3{,}10{,}6{,}11{,}8{,}5)],[(0{,}6{,}1{,}11{,}3{,}8){,}(2{,}7{,}10{,}9{,}5{,}4)],$\\
$[(0{,}1{,}2{,}3{,}4{,}5{,}6{,}7{,}8{,}9{,}10{,}11)],[(0{,}2{,}1{,}3{,}5{,}7{,}4{,}6{,}9{,}8{,}11{,}10)],[(0{,}5{,}1{,}4{,}3{,}2{,}6{,}8{,}10{,}7{,}11{,}9)],$\\
$[(0{,}7{,}2{,}4{,}8{,}1{,}5{,}9{,}11{,}6{,}10{,}3)],[(0{,}8{,}2{,}9{,}3{,}6{,}5{,}11{,}4{,}10{,}1{,}7)],[(0{,}9{,}1{,}8{,}6{,}3{,}11{,}7{,}5{,}2{,}10{,}4)],$\\
$[(0{,}10{,}2{,}11{,}5{,}8{,}4{,}9{,}7{,}3{,}1{,}6)],[(0{,}11{,}2{,}8{,}3{,}7{,}9{,}6{,}4{,}1{,}10{,}5)]$
 %--------------------------------------------------
\item HWP$^*(12;6^{4},12^{7})${,}\\
$[(0{,}3{,}9{,}4{,}11{,}1){,}(2{,}5{,}10{,}8{,}7{,}6)],[(0{,}4{,}7{,}1{,}9{,}2){,}(3{,}10{,}6{,}11{,}8{,}5)],[(0{,}6{,}1{,}11{,}3{,}8){,}(2{,}7{,}10{,}9{,}5{,}4)],$\\
$[(0{,}9{,}1{,}8{,}2{,}10){,}(3{,}6{,}5{,}11{,}7{,}4)],[(0{,}1{,}2{,}3{,}4{,}5{,}6{,}7{,}8{,}9{,}10{,}11)],[(0{,}2{,}1{,}3{,}5{,}7{,}9{,}6{,}8{,}11{,}10{,}4)],$\\
$[(0{,}5{,}1{,}4{,}6{,}3{,}2{,}8{,}10{,}7{,}11{,}9)],[(0{,}7{,}2{,}4{,}1{,}10{,}5{,}9{,}8{,}3{,}11{,}6)],[(0{,}8{,}4{,}9{,}11{,}2{,}6{,}10{,}3{,}1{,}7{,}5)],$\\
$[(0{,}10{,}1{,}5{,}2{,}11{,}4{,}8{,}6{,}9{,}3{,}7)],[(0{,}11{,}5{,}8{,}1{,}6{,}4{,}10{,}2{,}9{,}7{,}3)]$
 %--------------------------------------------------
\item HWP$^*(12;6^{5},12^{6})${,}\\
$[(0{,}3{,}6{,}1{,}9{,}2){,}(4{,}11{,}7{,}10{,}8{,}5)],[(0{,}4{,}2{,}5{,}10{,}3){,}(1{,}11{,}8{,}7{,}6{,}9)],[(0{,}6{,}11{,}1{,}4{,}10){,}(2{,}7{,}5{,}3{,}9{,}8)],$\\
$[(0{,}7{,}4{,}3{,}8{,}1){,}(2{,}10{,}9{,}5{,}11{,}6)],[(0{,}9{,}4{,}7{,}1{,}8){,}(2{,}11{,}3{,}10{,}6{,}5)]{,}[(0{,}1{,}2{,}3{,}4{,}5{,}6{,}7{,}8{,}9{,}10{,}11)],$\\
$[(0{,}2{,}4{,}6{,}8{,}10{,}1{,}3{,}5{,}7{,}11{,}9)],[(0{,}5{,}8{,}3{,}11{,}10{,}2{,}1{,}7{,}9{,}6{,}4)],[(0{,}8{,}4{,}9{,}11{,}2{,}6{,}10{,}7{,}3{,}1{,}5)],$\\
$[(0{,}10{,}4{,}8{,}11{,}5{,}1{,}6{,}3{,}2{,}9{,}7)],[(0{,}11{,}4{,}1{,}10{,}5{,}9{,}3{,}7{,}2{,}8{,}6)]$
%--------------------------------------------------
\item HWP$^*(12;6^{6},12^{5})${,}\\
$[(0{,}3{,}2{,}5{,}1{,}8){,}(4{,}11{,}6{,}9{,}7{,}10)],[(0{,}4{,}2{,}7{,}5{,}10){,}(1{,}6{,}11{,}3{,}9{,}8)],[(0{,}6{,}1{,}9{,}5{,}2){,}(3{,}8{,}11{,}7{,}4{,}10)],$\\
$[(0{,}7{,}6{,}5{,}11{,}1){,}(2{,}9{,}4{,}3{,}10{,}8)],[(0{,}9{,}2{,}10{,}6{,}3){,}(1{,}11{,}5{,}4{,}8{,}7)],[(0{,}10{,}9{,}1{,}4{,}7){,}(2{,}11{,}8{,}5{,}3{,}6)],$\\
$[(0{,}1{,}2{,}3{,}4{,}5{,}6{,}7{,}8{,}9{,}10{,}11)],[(0{,}2{,}4{,}6{,}8{,}10{,}1{,}3{,}5{,}7{,}11{,}9)],[(0{,}5{,}8{,}3{,}11{,}10{,}2{,}1{,}7{,}9{,}6{,}4)],$\\
$[(0{,}8{,}4{,}9{,}11{,}2{,}6{,}10{,}7{,}3{,}1{,}5)],[(0{,}11{,}4{,}1{,}10{,}5{,}9{,}3{,}7{,}2{,}8{,}6)]$
 %--------------------------------------------------
\item HWP$^*(12;6^{7},12^{4})${,}\\
$[(0{,}3{,}2{,}5{,}1{,}6){,}(4{,}7{,}10{,}9{,}8{,}11)],[(0{,}4{,}1{,}8{,}2{,}7){,}(3{,}9{,}5{,}10{,}6{,}11)],[(0{,}6{,}1{,}9{,}3{,}10){,}(2{,}8{,}5{,}4{,}11{,}7)],$\\
$[(0{,}7{,}5{,}11{,}8{,}1){,}(2{,}9{,}4{,}10{,}3{,}6)],[(0{,}9{,}2{,}10{,}4{,}8){,}(1{,}11{,}6{,}5{,}3{,}7)],[(0{,}10{,}8{,}7{,}6{,}3){,}(1{,}4{,}2{,}11{,}5{,}9)],$\\
$[(0{,}11{,}1{,}10{,}5{,}2){,}(3{,}8{,}6{,}9{,}7{,}4)],[(0{,}1{,}2{,}3{,}4{,}5{,}6{,}7{,}8{,}9{,}10{,}11)],[(0{,}2{,}4{,}6{,}8{,}10{,}1{,}3{,}5{,}7{,}11{,}9)]{,}$\\
$[(0{,}5{,}8{,}3{,}11{,}10{,}2{,}1{,}7{,}9{,}6{,}4)],[(0{,}8{,}4{,}9{,}11{,}2{,}6{,}10{,}7{,}3{,}1{,}5)]$
%--------------------------------------------------
\item HWP$^*(12;6^{8},12^{3})${,}\\
$[(0{,}3{,}1{,}4{,}2{,}5){,}(6{,}9{,}7{,}10{,}8{,}11)],[(0{,}4{,}1{,}5{,}2{,}6){,}(3{,}10{,}9{,}11{,}8{,}7)],[(0{,}6{,}1{,}8{,}2{,}7){,}(3{,}9{,}4{,}10{,}5{,}11)],$\\
$[(0{,}7{,}1{,}6{,}10{,}3){,}(2{,}11{,}4{,}8{,}5{,}9)],[(0{,}8{,}4{,}7{,}2{,}10){,}(1{,}9{,}5{,}3{,}6{,}11)],[(0{,}9{,}3{,}8{,}6{,}2){,}(1{,}10{,}7{,}4{,}11{,}5)],$\\
$[(0{,}10{,}6{,}3{,}2{,}8){,}(1{,}11{,}7{,}5{,}4{,}9)],[(0{,}11{,}2{,}9{,}8{,}1){,}(3{,}7{,}6{,}5{,}10{,}4)],[(0{,}1{,}2{,}3{,}4{,}5{,}6{,}7{,}8{,}9{,}10{,}11)],$\\
$[(0{,}2{,}4{,}6{,}8{,}10{,}1{,}3{,}5{,}7{,}11{,}9)],[(0{,}5{,}8{,}3{,}11{,}10{,}2{,}1{,}7{,}9{,}6{,}4)]$
%--------------------------------------------------
\item HWP$^*(12;6^{9},12^{2})${,}\\
$[(0{,}3{,}1{,}4{,}2{,}5){,}(6{,}9{,}7{,}10{,}8{,}11)],[(0{,}4{,}1{,}5{,}2{,}6){,}(3{,}8{,}7{,}9{,}11{,}10)],[(0{,}5{,}1{,}6{,}2{,}7){,}(3{,}10{,}9{,}4{,}11{,}8)],$\\
$[(0{,}6{,}1{,}7{,}2{,}8){,}(3{,}11{,}4{,}10{,}5{,}9)],[(0{,}7{,}1{,}9{,}6{,}3){,}(2{,}11{,}5{,}10{,}4{,}8)],[(0{,}8{,}5{,}3{,}6{,}10){,}(1{,}11{,}7{,}4{,}9{,}2)],$\\
$[(0{,}9{,}8{,}6{,}11{,}1){,}(2{,}10{,}7{,}5{,}4{,}3)],[(0{,}10{,}6{,}5{,}11{,}2){,}(1{,}8{,}4{,}7{,}3{,}9)],[(0{,}11{,}3{,}7{,}6{,}4){,}(1{,}10{,}2{,}9{,}5{,}8)],$\\
$[(0{,}1{,}2{,}3{,}4{,}5{,}6{,}7{,}8{,}9{,}10{,}11)],[(0{,}2{,}4{,}6{,}8{,}10{,}1{,}3{,}5{,}7{,}11{,}9)]$
%--------------------------------------------------
\item HWP$^*(12;6^{10},12^{1})${,}\\
$[(0{,}2{,}1{,}3{,}5{,}4){,}(6{,}8{,}7{,}9{,}11{,}10)],[(0{,}3{,}1{,}4{,}2{,}5){,}(6{,}9{,}7{,}10{,}8{,}11)],[(0{,}4{,}1{,}5{,}2{,}6){,}(3{,}7{,}11{,}8{,}10{,}9)],$\\
$[(0{,}5{,}1{,}6{,}2{,}7){,}(3{,}10{,}4{,}11{,}9{,}8)],[(0{,}6{,}1{,}7{,}2{,}8){,}(3{,}9{,}4{,}10{,}5{,}11)],[(0{,}7{,}1{,}9{,}5{,}10){,}(2{,}11{,}4{,}8{,}6{,}3)],$\\
$[(0{,}8{,}1{,}10{,}2{,}9){,}(3{,}11{,}7{,}4{,}6{,}5)],[(0{,}9{,}6{,}4{,}7{,}3){,}(1{,}11{,}5{,}8{,}2{,}10)],[(0{,}10{,}7{,}5{,}9{,}2){,}(1{,}8{,}4{,}3{,}6{,}11)],$\\
$[(0{,}11{,}2{,}4{,}9{,}1){,}(3{,}8{,}5{,}7{,}6{,}10)],[(0{,}1{,}2{,}3{,}4{,}5{,}6{,}7{,}8{,}9{,}10{,}11)]$
 %-----------------------------------------------
\item HWP$^*(16;8^{2},16^{13})${,}\\
$[(0{,}1{,}2{,}3{,}4{,}5{,}6{,}7){,}(8{,}9{,}10{,}11{,}12{,}13{,}14{,}15)],[(0{,}2{,}4{,}6{,}8{,}10{,}12{,}14){,}(1{,}3{,}5{,}7{,}9{,}11{,}15{,}13)]{,}$\\
$[(0{,}3{,}8{,}6{,}15{,}10{,}1{,}14{,}5{,}4{,}9{,}7{,}13{,}2{,}12{,}11)],[(0{,}4{,}1{,}6{,}2{,}5{,}8{,}3{,}7{,}10{,}9{,}12{,}15{,}11{,}14{,}13)]{,}$\\
$[(0{,}5{,}1{,}7{,}3{,}2{,}6{,}4{,}8{,}11{,}9{,}14{,}10{,}13{,}15{,}12)],[(0{,}6{,}1{,}9{,}3{,}10{,}2{,}7{,}4{,}11{,}13{,}8{,}14{,}12{,}5{,}15)]{,}$\\
$[(0{,}7{,}1{,}10{,}3{,}6{,}5{,}2{,}15{,}14{,}8{,}13{,}11{,}4{,}12{,}9)],[(0{,}8{,}2{,}1{,}11{,}5{,}10{,}14{,}3{,}12{,}7{,}15{,}9{,}6{,}13{,}4)]{,}$\\
$[(0{,}9{,}4{,}2{,}8{,}12{,}3{,}13{,}6{,}14{,}1{,}15{,}7{,}11{,}10{,}5)],[(0{,}10{,}15{,}6{,}12{,}1{,}4{,}3{,}14{,}11{,}8{,}5{,}9{,}13{,}7{,}2)]{,}$\\
$[(0{,}11{,}3{,}15{,}2{,}13{,}5{,}12{,}4{,}14{,}9{,}1{,}8{,}7{,}6{,}10)],[(0{,}12{,}2{,}14{,}7{,}5{,}13{,}10{,}6{,}9{,}8{,}4{,}15{,}3{,}11{,}1)]{,}$\\
$[(0{,}13{,}3{,}1{,}12{,}6{,}11{,}2{,}9{,}15{,}5{,}14{,}4{,}10{,}7{,}8)],[(0{,}14{,}2{,}10{,}4{,}7{,}12{,}8{,}15{,}1{,}13{,}9{,}5{,}11{,}6{,}3)]{,}$\\
$[(0{,}15{,}4{,}13{,}12{,}10{,}8{,}1{,}5{,}3{,}9{,}2{,}11{,}7{,}14{,}6)]$
%-----------------------------------------------
\item HWP$^*(16;8^{4},16^{11})${,}\\
$[(0{,}6{,}12{,}15{,}5{,}14{,}7{,}1{,}){,}(2{,}9{,}4{,}3{,}13{,}10{,}8{,}11)],[(0{,}9{,}15{,}12{,}6{,}11{,}4{,}7){,}(1{,}10{,}2{,}5{,}13{,}8{,}14{,}3)]{,}$\\
$[(0{,}12{,}3{,}2{,}7{,}14{,}8{,}4){,}(1{,}11{,}10{,}5{,}15{,}6{,}13{,}9)],[(0{,}13{,}11{,}1{,}15{,}9{,}12{,}5){,}(2{,}14{,}6{,}10{,}3{,}7{,}4{,}8)]{,}$\\
$[(0{,}1{,}2{,}3{,}4{,}5{,}6{,}7{,}8{,}9{,}10{,}11{,}12{,}13{,}14{,}15)],[(0{,}2{,}4{,}6{,}8{,}10{,}12{,}14{,}1{,}3{,}5{,}7{,}9{,}11{,}15{,}13)]{,}$\\
$[(0{,}3{,}8{,}6{,}15{,}10{,}1{,}14{,}5{,}4{,}9{,}7{,}13{,}2{,}12{,}11)],[(0{,}4{,}2{,}1{,}8{,}3{,}6{,}5{,}11{,}9{,}14{,}10{,}13{,}15{,}7{,}12)]{,}$\\
$[(0{,}5{,}2{,}6{,}1{,}9{,}3{,}10{,}7{,}11{,}13{,}4{,}15{,}14{,}12{,}8)],[(0{,}7{,}5{,}1{,}4{,}10{,}14{,}13{,}3{,}12{,}2{,}8{,}15{,}11{,}6{,}9)]{,}$\\
$[(0{,}8{,}1{,}7{,}15{,}3{,}11{,}14{,}4{,}13{,}5{,}12{,}9{,}6{,}2{,}10)],[(0{,}10{,}15{,}4{,}12{,}1{,}6{,}3{,}14{,}11{,}8{,}5{,}9{,}13{,}7{,}2)]{,}$\\
$[(0{,}11{,}3{,}15{,}2{,}13{,}1{,}12{,}4{,}14{,}9{,}5{,}8{,}7{,}10{,}6)],[(0{,}14{,}2{,}15{,}1{,}13{,}6{,}4{,}11{,}5{,}10{,}9{,}8{,}12{,}7{,}3)]{,}$\\
$[(0{,}15{,}8{,}13{,}12{,}10{,}4{,}1{,}5{,}3{,}9{,}2{,}11{,}7{,}6{,}14)]$
%--------------------------------------------------
\item HWP$^*(16;8^{6},16^{9})${,}\\
$[(0{,}4{,}2{,}1{,}8{,}3{,}6{,}5){,}(7{,}11{,}10{,}13{,}9{,}12{,}15{,}14)],[(0{,}5{,}2{,}6{,}1{,}9{,}15{,}12){,}(3{,}10{,}7{,}14{,}8{,}11{,}13{,}4)]{,}$\\
$[(0{,}6{,}11{,}1{,}10{,}8{,}4{,}7){,}(2{,}14{,}12{,}5{,}13{,}15{,}9{,}3)],[(0{,}9{,}14{,}10{,}3{,}1{,}11{,}4){,}(2{,}5{,}15{,}7{,}12{,}6{,}13{,}8)]{,}$\\
$[(0{,}12{,}3{,}13{,}11{,}9{,}4{,}8){,}(1{,}15{,}5{,}14{,}6{,}10{,}2{,}7)],[(0{,}13{,}10{,}5{,}11{,}2{,}9{,}1){,}(3{,}7{,}4{,}15{,}6{,}12{,}8{,}14)]{,}$\\
$[(0{,}1{,}2{,}3{,}4{,}5{,}6{,}7{,}8{,}9{,}10{,}11{,}12{,}13{,}14{,}15)],[(0{,}2{,}4{,}6{,}8{,}10{,}12{,}14{,}1{,}3{,}5{,}7{,}9{,}11{,}15{,}13)]{,}$\\
$[(0{,}3{,}8{,}6{,}15{,}10{,}1{,}14{,}5{,}4{,}9{,}7{,}13{,}2{,}12{,}11)],[(0{,}7{,}5{,}1{,}4{,}10{,}14{,}13{,}3{,}12{,}2{,}8{,}15{,}11{,}6{,}9)]{,}$\\
$[(0{,}8{,}1{,}7{,}15{,}3{,}11{,}14{,}4{,}13{,}5{,}12{,}9{,}6{,}2{,}10)],[(0{,}10{,}15{,}4{,}12{,}1{,}6{,}3{,}14{,}11{,}8{,}5{,}9{,}13{,}7{,}2)]{,}$\\
$[(0{,}11{,}3{,}15{,}2{,}13{,}1{,}12{,}4{,}14{,}9{,}5{,}8{,}7{,}10{,}6)],[(0{,}14{,}2{,}15{,}1{,}13{,}6{,}4{,}11{,}5{,}10{,}9{,}8{,}12{,}7{,}3)]{,}$\\
$[(0{,}15{,}8{,}13{,}12{,}10{,}4{,}1{,}5{,}3{,}9{,}2{,}11{,}7{,}6{,}14)]$
%--------------------------------------------------
\item HWP$^*(16;8^{8},16^{7})${,}\\
$[( 0{,}4{,}2{,}1{,}7{,}3{,}6{,}5){,}(8{,}11{,}9{,}12{,}15{,}14{,}10{,}13)],[(0{,}5{,}2{,}6{,}1{,}8{,}3{,}10){,}(4{,}7{,}11{,}13{,}15{,}12{,}9{,}14)]{,}$\\
$[(0{,}6{,}2{,}5{,}10{,}3{,}7{,}12){,}(1{,}11{,}14{,}8{,}4{,}13{,}9{,}15)],[(0{,}8{,}1{,}9{,}3{,}2{,}10{,}7){,}(4{,}15{,}5{,}14{,}12{,}6{,}13{,}11)]{,}$\\
$[(0{,}9{,}8{,}12{,}7{,}14{,}6{,}4){,}(1{,}10{,}2{,}15{,}3{,}13{,}5{,}11)],[(0{,}12{,}8{,}14{,}2{,}9{,}4{,}3){,}(1{,}13{,}6{,}11{,}10{,}5{,}15{,}7)]{,}$\\
$[(0{,}13{,}4{,}11{,}5{,}12{,}3{,}1){,}(2{,}14{,}7{,}15{,}9{,}6{,}10{,}8)],[(0{,}14{,}3{,}11{,}2{,}7{,}4{,}8){,}(1{,}15{,}6{,}12{,}5{,}13{,}10{,}9)]{,}$\\
$[(0{,}1{,}2{,}3{,}4{,}5{,}6{,}7{,}8{,}9{,}10{,}11{,}12{,}13{,}14{,}15)],[(0{,}2{,}4{,}6{,}8{,}10{,}12{,}14{,}1{,}3{,}5{,}7{,}9{,}11{,}15{,}13)]{,}$\\
$[(0{,}3{,}8{,}6{,}15{,}10{,}1{,}14{,}5{,}4{,}9{,}7{,}13{,}2{,}12{,}11)],[(0{,}7{,}5{,}1{,}4{,}10{,}14{,}13{,}3{,}12{,}2{,}8{,}15{,}11{,}6{,}9)]{,}$\\
$[(0{,}10{,}15{,}4{,}12{,}1{,}6{,}3{,}14{,}11{,}8{,}5{,}9{,}13{,}7{,}2)],[(0{,}11{,}3{,}15{,}2{,}13{,}1{,}12{,}4{,}14{,}9{,}5{,}8{,}7{,}10{,}6)]{,}$\\
$[(0{,}15{,}8{,}13{,}12{,}10{,}4{,}1{,}5{,}3{,}9{,}2{,}11{,}7{,}6{,}14)]$
%--------------------------------------------------
\item HWP$^*(16;8^{10},16^{5})${,}\\
$[(0{,}4{,}2{,}1{,}6{,}3{,}7{,}5){,}(8{,}11{,}9{,}12{,}15{,}14{,}10{,}13)],[(0{,}5{,}1{,}4{,}3{,}2{,}6{,}9){,}(7{,}11{,}8{,}14{,}13{,}10{,}15{,}12)]{,}$\\
$[(0{,}6{,}1{,}7{,}2{,}5{,}9{,}3){,}(4{,}10{,}14{,}12{,}8{,}15{,}11{,}13)],[(0{,}7{,}1{,}8{,}2{,}9{,}4{,}12){,}(3{,}6{,}13{,}15{,}5{,}14{,}11{,}10)]$\\
$[(0{,}8{,}1{,}9{,}6{,}2{,}10{,}7){,}(3{,}12{,}5{,}13{,}11{,}14{,}4{,}15)],[(0{,}9{,}1{,}10{,}2{,}7{,}14{,}8){,}(3{,}11{,}5{,}15{,}4{,}13{,}6{,}12)]{,}$\\
$[(0{,}10{,}8{,}3{,}13{,}9{,}15{,}1){,}(2{,}14{,}7{,}4{,}11{,}6{,}5{,}12)],[(0{,}12{,}1{,}15{,}9{,}13{,}5{,}10){,}(2{,}8{,}4{,}7{,}3{,}14{,}6{,}11)]{,}$\\
$[(0{,}13{,}7{,}15{,}6{,}10{,}5{,}2){,}(1{,}11{,}4{,}8{,}12{,}9{,}14{,}3)],[(0{,}14{,}2{,}15{,}7{,}12{,}6{,}4){,}(1{,}13{,}3{,}10{,}9{,}8{,}5{,}11)]{,}$\\
$[(0{,}11{,}3{,}15{,}2{,}13{,}1{,}12{,}4{,}14{,}9{,}5{,}8{,}7{,}10{,}6)],[(0{,}15{,}8{,}13{,}12{,}10{,}4{,}1{,}5{,}3{,}9{,}2{,}11{,}7{,}6{,}14)]{,}$\\
$[(0{,}3{,}8{,}6{,}15{,}10{,}1{,}14{,}5{,}4{,}9{,}7{,}13{,}2{,}12{,}11)],[(0{,}2{,}4{,}6{,}8{,}10{,}12{,}14{,}1{,}3{,}5{,}7{,}9{,}11{,}15{,}13)]{,}$\\ 
$[(0{,}1{,}2{,}3{,}4{,}5{,}6{,}7{,}8{,}9{,}10{,}11{,}12{,}13{,}14{,}15)]$
%--------------------------------------------------
\item HWP$^*(16;8^{12},16^{3})${,}\\
$[(0{,}4{,}1{,}5{,}2{,}6{,}3{,}7){,}(8{,}11{,}9{,}12{,}10{,}13{,}15{,}14)],[(0{,}5{,}1{,}4{,}2{,}7{,}3{,}6){,}(8{,}12{,}9{,}13{,}11{,}14{,}10{,}15)]{,}$\\
$[(0{,}6{,}1{,}7{,}2{,}5{,}3{,}9){,}(4{,}8{,}13{,}10{,}14{,}12{,}15{,}11)],[(0{,}7{,}1{,}6{,}2{,}8{,}3{,}10){,}(4{,}11{,}5{,}14{,}13{,}9{,}15{,}12)]{,}$\\
$[(0{,}8{,}1{,}9{,}2{,}10{,}3{,}12){,}(4{,}7{,}5{,}15{,}6{,}14{,}11{,}13)],[(0{,}9{,}1{,}8{,}2{,}11{,}3{,}14){,}(4{,}10{,}5{,}12{,}6{,}13{,}7{,}15)]{,}$\\
$[(0{,}10{,}2{,}1{,}11{,}6{,}4{,}3){,}(5{,}13{,}8{,}15{,}9{,}14{,}7{,}12)],[(0{,}11{,}1{,}10{,}4{,}13{,}12{,}2){,}(3{,}15{,}7{,}6{,}5{,}9{,}8{,}14)]{,}$\\
$[(0{,}12{,}8{,}4{,}14{,}2{,}15{,}1){,}(3{,}13{,}5{,}11{,}7{,}10{,}6{,}9)],[(0{,}13{,}6{,}10{,}7{,}11{,}8{,}5){,}(1{,}15{,}2{,}14{,}9{,}4{,}12{,}3)]{,}$\\ 
$[(0{,}14{,}6{,}11{,}10{,}9{,}5{,}8){,}(1{,}12{,}7{,}4{,}15{,}3{,}2{,}13)],[(0{,}15{,}5{,}10{,}8{,}7{,}14{,}4){,}(1{,}13{,}3{,}11{,}2{,}9{,}6{,}12)]{,}$\\
$[(0{,}3{,}8{,}6{,}15{,}10{,}1{,}14{,}5{,}4{,}9{,}7{,}13{,}2{,}12{,}11)],[(0{,}2{,}4{,}6{,}8{,}10{,}12{,}14{,}1{,}3{,}5{,}7{,}9{,}11{,}15{,}13)]{,}$\\
$[(0{,}1{,}2{,}3{,}4{,}5{,}6{,}7{,}8{,}9{,}10{,}11{,}12{,}13{,}14{,}15)]$
%--------------------------------------------------
\item HWP$^*(16;8^{14},16^{1})${,}\\
$[(0{,}2{,}1{,}3{,}5{,}4{,}6{,}8){,}(7{,}9{,}11{,}10{,}12{,}14{,}13{,}15)],[(0{,}3{,}1{,}4{,}2{,}5{,}7{,}6){,}(8{,}10{,}9{,}12{,}15{,}13{,}11{,}14)]{,}$\\
$[(0{,}4{,}1{,}5{,}2{,}6{,}3{,}7){,}(8{,}11{,}9{,}13{,}10{,}15{,}14{,}12)],[(0{,}5{,}1{,}6{,}2{,}4{,}3{,}9){,}(7{,}10{,}13{,}8{,}14{,}11{,}15{,}12)]{,}$\\
$[(0{,}6{,}1{,}7{,}2{,}8{,}3{,}10){,}(4{,}9{,}14{,}5{,}15{,}11{,}13{,}12)],[(0{,}7{,}1{,}8{,}2{,}9{,}3{,}11){,}(4{,}12{,}5{,}13{,}6{,}15{,}10{,}14)]{,}$\\
$[(0{,}8{,}1{,}9{,}2{,}7{,}3{,}13){,}(4{,}15{,}5{,}12{,}11{,}6{,}14{,}10)],[(0{,}9{,}1{,}10{,}2{,}11{,}3{,}12){,}(4{,}7{,}14{,}6{,}13{,}5{,}8{,}15)]{,}$\\
$[(0{,}10{,}1{,}11{,}2{,}12{,}3{,}14){,}(4{,}8{,}13{,}7{,}5{,}9{,}15{,}6)],[(0{,}11{,}1{,}12{,}2{,}13{,}9{,}5){,}(3{,}15{,}8{,}6{,}10{,}7{,}4{,}14)]{,}$\\
$[(0{,}12{,}9{,}6{,}11{,}7{,}13{,}1){,}(2{,}15{,}3{,}8{,}4{,}10{,}5{,}14)],[(0{,}13{,}3{,}2{,}14{,}7{,}11{,}4){,}(1{,}15{,}9{,}8{,}5{,}10{,}6{,}12)]{,}$\\
$[(0{,}14{,}1{,}13{,}4{,}11{,}5{,}3){,}(2{,}10{,}8{,}12{,}6{,}9{,}7{,}15)],[(0{,}15{,}1{,}14{,}9{,}4{,}13{,}2){,}(3{,}6{,}5{,}11{,}8{,}7{,}12{,}10)]{,}$\\ 
$[(0{,}1{,}2{,}3{,}4{,}5{,}6{,}7{,}8{,}9{,}10{,}11{,}12{,}13{,}14{,}15)]$
%--------------------------------------------------
\item HWP$^*(16;4^{1},16^{14})${,}\\
$[(0{,}1{,}2{,}3){,}(4{,}5{,}6{,}7){,}(8{,}9{,}10{,}11){,}(12{,}13{,}14{,}15)],[(0{,}2{,}4{,}3{,}7{,}1{,}8{,}5{,}9{,}6{,}10{,}12{,}15{,}14{,}11{,}13)]{,}$\\
$[(0{,}3{,}9{,}12{,}1{,}5{,}13{,}8{,}2{,}6{,}14{,}7{,}11{,}15{,}10{,}4)],[(0{,}4{,}12{,}2{,}1{,}11{,}7{,}5{,}8{,}14{,}9{,}13{,}10{,}3{,}6{,}15)]{,}$\\
$[(0{,}5{,}12{,}4{,}1{,}15{,}3{,}11{,}2{,}14{,}6{,}8{,}7{,}10{,}13{,}9)],[(0{,}6{,}11{,}5{,}10{,}7{,}12{,}9{,}2{,}15{,}4{,}8{,}3{,}13{,}1{,}14)]{,}$\\
$[(0{,}7{,}2{,}8{,}1{,}4{,}6{,}3{,}10{,}5{,}15{,}9{,}14{,}13{,}11{,}12)],[(0{,}8{,}4{,}15{,}13{,}2{,}12{,}14{,}1{,}3{,}5{,}7{,}9{,}11{,}10{,}6)]{,}$\\
$[(0{,}9{,}15{,}8{,}11{,}14{,}12{,}10{,}2{,}5{,}3{,}1{,}6{,}13{,}4{,}7)],[(0{,}10{,}1{,}7{,}3{,}2{,}9{,}4{,}13{,}5{,}14{,}8{,}15{,}6{,}12{,}11)]{,}$\\
$[(0{,}11{,}1{,}9{,}3{,}4{,}2{,}13{,}15{,}7{,}14{,}10{,}8{,}12{,}6{,}5)],[(0{,}12{,}3{,}8{,}6{,}1{,}13{,}7{,}15{,}11{,}9{,}5{,}4{,}14{,}2{,}10)]{,}$\\
$[(0{,}13{,}3{,}14{,}5{,}11{,}4{,}9{,}8{,}10{,}15{,}1{,}12{,}7{,}6{,}2)],[(0{,}14{,}3{,}15{,}2{,}11{,}6{,}4{,}10{,}9{,}7{,}8{,}13{,}12{,}5{,}1)]{,}$\\
$[(0{,}15{,}5{,}2{,}7{,}13{,}6{,}9{,}1{,}10{,}14{,}4{,}11{,}3{,}12{,}8)]$
%--------------------------------------------------
\item HWP$^*(16;4^{2},16^{13})${,}\\
$[(0{,}1{,}2{,}3){,}(4{,}5{,}6{,}7){,}(8{,}9{,}10{,}11){,}(12{,}13{,}14{,}15)],[(0{,}2{,}4{,}6){,}(8{,}10{,}12{,}14){,}(1{,}3{,}5{,}7){,}(9{,}11{,}13{,}15)],$\\
$[(0{,}3{,}9{,}12{,}1{,}5{,}13{,}8{,}2{,}6{,}14{,}7{,}11{,}15{,}10{,}4)],[(0{,}4{,}12{,}2{,}1{,}11{,}7{,}5{,}8{,}14{,}9{,}13{,}10{,}3{,}6{,}15)],$\\
$[(0{,}5{,}12{,}4{,}1{,}15{,}3{,}11{,}2{,}14{,}6{,}8{,}7{,}10{,}13{,}9)],[(0{,}6{,}11{,}5{,}10{,}7{,}12{,}9{,}2{,}15{,}4{,}8{,}3{,}13{,}1{,}14)],$\\
$[(0{,}7{,}2{,}8{,}1{,}4{,}3{,}10{,}5{,}9{,}6{,}12{,}15{,}14{,}13{,}11)],[(0{,}8{,}4{,}2{,}7{,}3{,}12{,}5{,}1{,}9{,}14{,}11{,}6{,}10{,}15{,}13)],$\\
$[(0{,}9{,}15{,}8{,}11{,}14{,}12{,}10{,}2{,}5{,}3{,}1{,}6{,}13{,}4{,}7)],[(0{,}10{,}1{,}7{,}6{,}2{,}9{,}3{,}8{,}13{,}5{,}14{,}4{,}15{,}11{,}12)],$\\
$[(0{,}11{,}1{,}8{,}5{,}2{,}12{,}3{,}15{,}6{,}9{,}4{,}13{,}7{,}14{,}10)],[(0{,}12{,}6{,}1{,}10{,}8{,}15{,}2{,}11{,}4{,}9{,}7{,}13{,}3{,}14{,}5)],$\\
$[(0{,}13{,}2{,}10{,}6{,}3{,}4{,}14{,}1{,}12{,}11{,}9{,}5{,}15{,}7{,}8)],[(0{,}14{,}3{,}2{,}13{,}6{,}5{,}4{,}11{,}10{,}9{,}8{,}12{,}7{,}15{,}1)],$\\
$[(0{,}15{,}5{,}11{,}3{,}7{,}9{,}1{,}13{,}12{,}8{,}6{,}4{,}10{,}14{,}2)]$
%--------------------------------------------------
\item HWP$^*(16;4^{3},16^{12})${,}\\
$[(0{,}1{,}2{,}3){,}(4{,}5{,}6{,}7){,}(8{,}9{,}10{,}11){,}(12{,}13{,}14{,}15)],[(0{,}2{,}4{,}6){,}(8{,}10{,}12{,}14){,}(1{,}3{,}5{,}7){,}(9{,}11{,}13{,}15)],$\\
$[(0{,}3{,}9{,}12){,}(1{,}5{,}13{,}8){,}(2{,}6{,}14{,}7){,}(10{,}15{,}11{,}4)],[(0{,}4{,}12{,}2{,}1{,}11{,}7{,}5{,}8{,}14{,}9{,}13{,}10{,}3{,}6{,}15)],$\\
$[(0{,}5{,}12{,}4{,}1{,}15{,}3{,}11{,}2{,}14{,}6{,}8{,}7{,}10{,}13{,}9)],[(0{,}6{,}11{,}5{,}10{,}7{,}12{,}9{,}2{,}15{,}4{,}8{,}3{,}13{,}1{,}14)],$\\
$[(0{,}7{,}3{,}2{,}8{,}4{,}9{,}1{,}10{,}5{,}11{,}6{,}12{,}15{,}14{,}13)],[(0{,}8{,}2{,}7{,}6{,}1{,}4{,}3{,}10{,}9{,}14{,}5{,}15{,}13{,}12{,}11)],$\\
$[(0{,}9{,}15{,}8{,}11{,}14{,}12{,}10{,}2{,}5{,}3{,}1{,}6{,}13{,}4{,}7)],[(0{,}10{,}1{,}7{,}8{,}5{,}14{,}2{,}13{,}11{,}12{,}3{,}15{,}6{,}9{,}4)],$\\
$[(0{,}11{,}1{,}8{,}6{,}2{,}9{,}3{,}12{,}7{,}15{,}10{,}14{,}4{,}13{,}5)],[(0{,}12{,}1{,}9{,}5{,}2{,}10{,}4{,}14{,}11{,}15{,}7{,}13{,}6{,}3{,}8)],$\\
$[(0{,}13{,}3{,}4{,}11{,}9{,}7{,}14{,}10{,}6{,}5{,}1{,}12{,}8{,}15{,}2)],[(0{,}14{,}3{,}7{,}11{,}10{,}8{,}13{,}2{,}12{,}5{,}9{,}6{,}4{,}15{,}1)],$\\
$[(0{,}15{,}5{,}4{,}2{,}11{,}3{,}14{,}1{,}13{,}7{,}9{,}8{,}12{,}6{,}10)]$
%--------------------------------------------------
\item HWP$^*(16;4^{4},16^{11})${,}\\
$[(0{,}1{,}2{,}3){,}(4{,}5{,}6{,}7){,}(8{,}9{,}10{,}11){,}(12{,}13{,}14{,}15)],[(0{,}2{,}4{,}6){,}(8{,}10{,}12{,}14){,}(1{,}3{,}5{,}7){,}(9{,}11{,}13{,}15)],$\\
$[(0{,}3{,}9{,}12){,}(1{,}5{,}13{,}8){,}(2{,}6{,}14{,}7){,}(10{,}15{,}11{,}4)],[(0{,}4{,}15{,}2){,}(1{,}11{,}7{,}5){,}(3{,}14{,}9{,}13){,}(10{,}8{,}12{,}6)],$\\
$[(0{,}5{,}12{,}4{,}1{,}15{,}3{,}11{,}2{,}14{,}6{,}8{,}7{,}10{,}13{,}9)],[(0{,}6{,}11{,}5{,}10{,}7{,}12{,}9{,}2{,}15{,}4{,}8{,}3{,}13{,}1{,}14)],$\\
$[(0{,}7{,}3{,}2{,}1{,}4{,}9{,}5{,}8{,}6{,}12{,}11{,}15{,}10{,}14{,}13)],[(0{,}8{,}2{,}7{,}6{,}1{,}9{,}3{,}4{,}11{,}12{,}15{,}13{,}5{,}14{,}10)],$\\
$[(0{,}9{,}15{,}8{,}11{,}14{,}12{,}10{,}2{,}5{,}3{,}1{,}6{,}13{,}4{,}7)],[(0{,}10{,}1{,}7{,}8{,}4{,}2{,}13{,}6{,}9{,}14{,}11{,}3{,}12{,}5{,}15)],$\\
$[(0{,}11{,}1{,}8{,}5{,}2{,}9{,}7{,}13{,}12{,}3{,}10{,}6{,}15{,}14{,}4)],[(0{,}12{,}1{,}10{,}3{,}6{,}2{,}8{,}15{,}7{,}14{,}5{,}9{,}4{,}13{,}11)],$\\
$[(0{,}13{,}7{,}9{,}8{,}14{,}1{,}12{,}2{,}11{,}10{,}4{,}3{,}15{,}6{,}5)],[(0{,}14{,}3{,}7{,}15{,}1{,}13{,}2{,}10{,}5{,}11{,}9{,}6{,}4{,}12{,}8)],$\\
$[(0{,}15{,}5{,}4{,}14{,}2{,}12{,}7{,}11{,}6{,}3{,}8{,}13{,}10{,}9{,}1)]$
%--------------------------------------------------
\item HWP$^*(16;4^{5},16^{10})${,}\\
$[(0{,}1{,}2{,}3){,}(4{,}5{,}6{,}7){,}(8{,}9{,}10{,}11){,}(12{,}13{,}14{,}15)],[(0{,}2{,}4{,}6){,}(8{,}10{,}12{,}14){,}(1{,}3{,}5{,}7){,}(9{,}11{,}13{,}15)],$\\
$[(0{,}3{,}9{,}12){,}(1{,}5{,}13{,}8){,}(2{,}6{,}14{,}7){,}(10{,}15{,}11{,}4)],[(0{,}4{,}15{,}2){,}(1{,}11{,}7{,}5){,}(3{,}14{,}9{,}13){,}(10{,}8{,}12{,}6)],$\\
$[(0{,}5{,}12{,}4){,}(1{,}15{,}3{,}11){,}(8{,}14{,}6{,}2){,}(7{,}10{,}13{,}9)],[(0{,}6{,}11{,}5{,}10{,}7{,}12{,}9{,}2{,}15{,}4{,}8{,}3{,}13{,}1{,}14)],$\\
$[(0{,}7{,}3{,}1{,}4{,}2{,}5{,}8{,}6{,}9{,}14{,}11{,}12{,}15{,}13{,}10)],[(0{,}8{,}2{,}1{,}6{,}3{,}4{,}7{,}9{,}5{,}15{,}10{,}14{,}13{,}12{,}11)],$\\
$[(0{,}9{,}1{,}7{,}6{,}4{,}3{,}2{,}10{,}5{,}11{,}15{,}14{,}12{,}8{,}13)],[(0{,}10{,}1{,}8{,}4{,}9{,}3{,}6{,}13{,}11{,}14{,}2{,}12{,}7{,}15{,}5)],$\\
$[(0{,}11{,}2{,}7{,}8{,}5{,}3{,}12{,}1{,}13{,}4{,}14{,}10{,}9{,}6{,}15)],[(0{,}12{,}2{,}9{,}4{,}11{,}3{,}10{,}6{,}8{,}15{,}7{,}13{,}5{,}14{,}1)],$\\
$[(0{,}13{,}2{,}11{,}6{,}1{,}10{,}3{,}15{,}8{,}7{,}14{,}4{,}12{,}5{,}9)],[(0{,}14{,}5{,}4{,}1{,}9{,}15{,}6{,}12{,}3{,}8{,}11{,}10{,}2{,}13{,}7)],$\\
$[(0{,}15{,}1{,}12{,}10{,}4{,}13{,}6{,}5{,}2{,}14{,}3{,}7{,}11{,}9{,}8)]$
%--------------------------------------------------
\item HWP$^*(16;4^{6},16^{9})${,}\\
$
[(0{,}1{,}2{,}3){,}(4{,}5{,}6{,}7){,}(8{,}9{,}10{,}11){,}(12{,}13{,}14{,}15)],[(0{,}2{,}4{,}6){,}(8{,}10{,}12{,}14){,}(1{,}3{,}5{,}7){,}(9{,}11{,}13{,}15)],$\\
$[(0{,}3{,}9{,}12){,}(1{,}5{,}13{,}8){,}(2{,}6{,}14{,}7){,}(10{,}15{,}11{,}4)],[(0{,}4{,}15{,}2){,}(1{,}11{,}7{,}5){,}(3{,}14{,}9{,}13){,}(10{,}8{,}12{,}6)],$\\
$[(0{,}5{,}12{,}4){,}(1{,}15{,}3{,}11){,}(8{,}14{,}6{,}2){,}(7{,}10{,}13{,}9)],[(0{,}6{,}11{,}5){,}(1{,}7{,}12{,}9){,}(2{,}15{,}4{,}8){,}(3{,}13{,}10{,}14)],$\\
$[(0{,}7{,}3{,}1{,}4{,}2{,}5{,}8{,}6{,}9{,}14{,}11{,}15{,}13{,}12{,}10)],[(0{,}8{,}3{,}2{,}1{,}6{,}4{,}7{,}11{,}12{,}5{,}10{,}9{,}15{,}14{,}13)],$\\
$[(0{,}9{,}2{,}7{,}6{,}1{,}8{,}4{,}13{,}5{,}11{,}14{,}10{,}3{,}12{,}15)],[(0{,}10{,}1{,}9{,}3{,}4{,}11{,}2{,}12{,}7{,}8{,}13{,}6{,}15{,}5{,}14)],$\\
$[(0{,}11{,}3{,}6{,}5{,}2{,}9{,}4{,}14{,}12{,}8{,}15{,}10{,}7{,}13{,}1)],[(0{,}12{,}1{,}10{,}2{,}11{,}6{,}13{,}4{,}3{,}15{,}7{,}14{,}5{,}9{,}8)],$\\
$[(0{,}13{,}2{,}14{,}4{,}1{,}12{,}11{,}9{,}6{,}3{,}10{,}5{,}15{,}8{,}7)],[(0{,}14{,}1{,}13{,}7{,}15{,}6{,}12{,}2{,}10{,}4{,}9{,}5{,}3{,}8{,}11)],$\\
$[(0{,}15{,}1{,}14{,}2{,}13{,}11{,}10{,}6{,}8{,}5{,}4{,}12{,}3{,}7{,}9)]$
%--------------------------------------------------
\item HWP$^*(16;4^{7},16^{8})${,}\\
$[(0{,}1{,}2{,}3){,}(4{,}5{,}6{,}7){,}(8{,}9{,}10{,}11){,}(12{,}13{,}14{,}15)],[(0{,}2{,}4{,}6){,}(8{,}10{,}12{,}14){,}(1{,}3{,}5{,}7){,}(9{,}11{,}13{,}15)],$\\
$[(0{,}3{,}9{,}12){,}(1{,}5{,}13{,}8){,}(2{,}6{,}14{,}7){,}(10{,}15{,}11{,}4)],[(0{,}4{,}15{,}2){,}(1{,}11{,}7{,}5){,}(3{,}14{,}9{,}13){,}(10{,}8{,}12{,}6)],$\\
$[(0{,}5{,}12{,}4){,}(1{,}15{,}3{,}11){,}(8{,}14{,}6{,}2){,}(7{,}10{,}13{,}9)],[(0{,}6{,}11{,}5){,}(1{,}7{,}12{,}9){,}(2{,}15{,}4{,}8){,}(3{,}13{,}10{,}14)],$\\
$[(0{,}7{,}15{,}10){,}(1{,}12{,}2{,}14){,}(3{,}6{,}13{,}11){,}(4{,}9{,}8{,}5)],[(0{,}8{,}3{,}1{,}4{,}2{,}5{,}9{,}6{,}12{,}10{,}7{,}11{,}15{,}14{,}13)],$\\
$[(0{,}9{,}2{,}1{,}6{,}3{,}4{,}7{,}8{,}13{,}12{,}11{,}14{,}10{,}5{,}15)],[(0{,}11{,}2{,}9{,}3{,}7{,}13{,}1{,}10{,}4{,}12{,}8{,}6{,}15{,}5{,}14)],$\\ 
$[(0{,}12{,}1{,}9{,}4{,}11{,}10{,}2{,}13{,}7{,}14{,}5{,}3{,}15{,}6{,}8)],[(0{,}10{,}1{,}8{,}4{,}3{,}2{,}7{,}6{,}9{,}14{,}12{,}15{,}13{,}5{,}11)],$\\
$[(0{,}13{,}4{,}14{,}2{,}11{,}9{,}15{,}8{,}7{,}3{,}12{,}5{,}10{,}6{,}1)],[(0{,}14{,}11{,}6{,}4{,}1{,}13{,}2{,}12{,}3{,}10{,}9{,}5{,}8{,}15{,}7)],$\\
$[(0{,}15{,}1{,}14{,}4{,}13{,}6{,}5{,}2{,}10{,}3{,}8{,}11{,}12{,}7{,}9)]$
%--------------------------------------------------
\item HWP$^*(16;4^{8},16^{7})${,}\\ 
$[(0{,}1{,}2{,}3){,}(4{,}5{,}6{,}7){,}(8{,}9{,}10{,}11){,}(12{,}13{,}14{,}15)],[(0{,}2{,}4{,}6){,}(8{,}10{,}12{,}14){,}(1{,}3{,}5{,}7){,}(9{,}11{,}13{,}15)],$\\
$[(0{,}3{,}9{,}12){,}(1{,}5{,}13{,}8){,}(2{,}6{,}14{,}7){,}(10{,}15{,}11{,}4)],[(0{,}4{,}15{,}2){,}(1{,}11{,}7{,}5){,}(3{,}14{,}9{,}13){,}(10{,}8{,}12{,}6)],$\\
$[(0{,}5{,}12{,}4){,}(1{,}15{,}3{,}11){,}(8{,}14{,}6{,}2){,}(7{,}10{,}13{,}9)],[(0{,}6{,}11{,}5){,}(1{,}7{,}12{,}9){,}(2{,}15{,}4{,}8){,}(3{,}13{,}10{,}14)],$\\
$[(0{,}7{,}15{,}10){,}(1{,}12{,}2{,}14){,}(3{,}6{,}13{,}11){,}(4{,}9{,}8{,}5)],[(0{,}8{,}15{,}7){,}(1{,}9{,}2{,}13){,}(3{,}10{,}4{,}12){,}(14{,}11{,}6{,}5)],$\\
$[(0{,}9{,}3{,}1{,}4{,}2{,}5{,}8{,}6{,}12{,}10{,}7{,}11{,}15{,}14{,}13)],[(0{,}10{,}1{,}6{,}3{,}2{,}7{,}8{,}4{,}11{,}12{,}15{,}13{,}5{,}9{,}14)],$\\
$[(0{,}11{,}2{,}1{,}8{,}3{,}4{,}13{,}7{,}14{,}12{,}5{,}10{,}6{,}9{,}15)],[(0{,}12{,}1{,}10{,}2{,}9{,}4{,}14{,}5{,}3{,}7{,}13{,}6{,}15{,}8{,}11)],$\\
$[(0{,}13{,}2{,}12{,}8{,}7{,}3{,}15{,}6{,}4{,}1{,}14{,}10{,}5{,}11{,}9)],[(0{,}14{,}2{,}11{,}10{,}9{,}5{,}15{,}1{,}13{,}4{,}3{,}12{,}7{,}6{,}8)],$\\
$[(0{,}15{,}5{,}2{,}10{,}3{,}8{,}13{,}12{,}11{,}14{,}4{,}7{,}9{,}6{,}1)]$
%--------------------------------------------------
\item HWP$^*(16;4^{9},16^{6})${,}\\
$[(0{,}1{,}2{,}3){,}(4{,}5{,}6{,}7){,}(8{,}9{,}10{,}11){,}(12{,}13{,}14{,}15)],[(0{,}2{,}4{,}6){,}(8{,}10{,}12{,}14){,}(1{,}3{,}5{,}7){,}(9{,}11{,}13{,}15)],$\\
$[(0{,}3{,}9{,}12){,}(1{,}5{,}13{,}8){,}(2{,}6{,}14{,}7){,}(10{,}15{,}11{,}4)],[(0{,}4{,}15{,}2){,}(1{,}11{,}7{,}5){,}(3{,}14{,}9{,}13){,}(10{,}8{,}12{,}6)],$\\
$[(0{,}5{,}12{,}4){,}(1{,}15{,}3{,}11){,}(8{,}14{,}6{,}2){,}(7{,}10{,}13{,}9)],[(0{,}6{,}11{,}5){,}(1{,}7{,}12{,}9){,}(2{,}15{,}4{,}8){,}(3{,}13{,}10{,}14)],$\\
$[(0{,}7{,}15{,}10){,}(1{,}12{,}2{,}14){,}(3{,}6{,}13{,}11){,}(4{,}9{,}8{,}5)],[(0{,}8{,}15{,}7){,}(1{,}9{,}2{,}13){,}(3{,}10{,}4{,}12){,}(14{,}11{,}6{,}5)],$\\
$[(0{,}9{,}15{,}8){,}(1{,}6{,}12{,}10){,}(2{,}5{,}11{,}14){,}(7{,}13{,}4{,}3)],[(0{,}10{,}2{,}1{,}4{,}7{,}3{,}8{,}6{,}9{,}5{,}15{,}14{,}13{,}12{,}11)]{,}$\\
$[(0{,}11{,}2{,}7{,}6{,}1{,}8{,}3{,}4{,}13{,}5{,}10{,}9{,}14{,}12{,}15)],[(0{,}12{,}1{,}10{,}3{,}2{,}9{,}6{,}4{,}14{,}5{,}8{,}7{,}11{,}15{,}13)]{,}$\\
$[(0{,}13{,}6{,}3{,}15{,}1{,}14{,}10{,}7{,}8{,}4{,}2{,}11{,}12{,}5{,}9)],[(0{,}14{,}4{,}11{,}10{,}6{,}15{,}5{,}2{,}12{,}8{,}13{,}7{,}9{,}3{,}1)]{,}$\\
$[(0{,}15{,}6{,}8{,}11{,}9{,}4{,}1{,}13{,}2{,}10{,}5{,}3{,}12{,}7{,}14)]$
%--------------------------------------------------
\item HWP$^*(16;4^{10},16^{5})${,}\\
$[(0{,}1{,}2{,}3){,}(4{,}5{,}6{,}7){,}(8{,}9{,}10{,}11){,}(12{,}13{,}14{,}15)],[(0{,}2{,}4{,}6){,}(8{,}10{,}12{,}14){,}(1{,}3{,}5{,}7){,}(9{,}11{,}13{,}15)],$\\
$[(0{,}3{,}9{,}12){,}(1{,}5{,}13{,}8){,}(2{,}6{,}14{,}7){,}(10{,}15{,}11{,}4)],[(0{,}4{,}15{,}2){,}(1{,}11{,}7{,}5){,}(3{,}14{,}9{,}13){,}(10{,}8{,}12{,}6)],$\\
$[(0{,}5{,}12{,}4){,}(1{,}15{,}3{,}11){,}(8{,}14{,}6{,}2){,}(7{,}10{,}13{,}9)],[(0{,}6{,}11{,}5){,}(1{,}7{,}12{,}9){,}(2{,}15{,}4{,}8){,}(3{,}13{,}10{,}14)],$\\
$[(0{,}7{,}15{,}10){,}(1{,}12{,}2{,}14){,}(3{,}6{,}13{,}11){,}(4{,}9{,}8{,}5)],[(0{,}8{,}15{,}7){,}(1{,}9{,}2{,}13){,}(3{,}10{,}4{,}12){,}(14{,}11{,}6{,}5)],$\\
$[(0{,}9{,}15{,}8){,}(1{,}6{,}12{,}10){,}(2{,}5{,}11{,}14){,}(7{,}13{,}4{,}3)],[(0{,}10{,}5{,}9){,}(2{,}12{,}7{,}11){,}(3{,}15{,}1{,}8){,}(4{,}14{,}13{,}6)]{,}$\\
$[(0{,}11{,}9{,}3{,}1{,}4{,}2{,}7{,}8{,}13{,}12{,}5{,}10{,}6{,}15{,}14)],[(0{,}12{,}1{,}10{,}2{,}9{,}6{,}3{,}4{,}7{,}14{,}5{,}8{,}11{,}15{,}13)]{,}$\\
$[(0{,}13{,}2{,}11{,}10{,}7{,}6{,}9{,}14{,}12{,}15{,}5{,}3{,}8{,}4{,}1)],[(0{,}14{,}4{,}11{,}12{,}8{,}6{,}1{,}13{,}7{,}3{,}2{,}10{,}9{,}5{,}15)]{,}$\\
$[(0{,}15{,}6{,}8{,}7{,}9{,}4{,}13{,}5{,}2{,}1{,}14{,}10{,}3{,}12{,}11)]$
%--------------------------------------------------
\item HWP$^*(16;4^{11},16^{4})${,}\\
$[(0{,}1{,}2{,}3){,}(4{,}5{,}6{,}7){,}(8{,}9{,}10{,}11){,}(12{,}13{,}14{,}15)],[(0{,}2{,}1{,}5){,}(3{,}6{,}4{,}8){,}(7{,}9{,}12{,}11){,}(10{,}14{,}13{,}15)],$\\
$[(0{,}3{,}5{,}2){,}(1{,}6{,}8{,}7){,}(4{,}12{,}9{,}13){,}(10{,}15{,}11{,}14)],[(0{,}4{,}3{,}7){,}(1{,}8{,}2{,}15){,}(5{,}9{,}11{,}13){,}(6{,}10{,}12{,}14)],$\\
$[(0{,}5{,}4{,}9){,}(1{,}12{,}2{,}8){,}(3{,}10{,}13{,}11){,}(6{,}14{,}7{,}15)],[(0{,}6{,}12{,}10){,}(1{,}3{,}11{,}5){,}(2{,}14{,}9{,}7){,}(4{,}13{,}8{,}15)],$\\
$[(0{,}7{,}12{,}4){,}(1{,}15{,}3{,}9){,}(2{,}13{,}6{,}11){,}(5{,}14{,}8{,}10)],[(0{,}8{,}14{,}11){,}(1{,}9{,}4{,}10){,}(2{,}5{,}12{,}6){,}(3{,}15{,}7{,}13)],$\\
$[(0{,}9{,}15{,}8){,}(1{,}7{,}5{,}11){,}(2{,}12{,}3{,}14){,}(4{,}6{,}13{,}10)],[(0{,}10{,}8{,}12){,}(1{,}14{,}3{,}13){,}(2{,}4{,}15{,}9){,}(5{,}7{,}11{,}6)]{,}$\\
$[(0{,}15{,}2{,}6){,}(1{,}11{,}4{,}14){,}(3{,}12{,}7{,}10){,}(5{,}13{,}9{,}8)],[(0{,}11{,}9{,}3{,}1{,}4{,}2{,}7{,}8{,}13{,}12{,}5{,}10{,}6{,}15{,}14)]{,}$\\
$[(0{,}12{,}1{,}10{,}2{,}9{,}6{,}3{,}4{,}7{,}14{,}5{,}8{,}11{,}15{,}13)],[(0{,}13{,}2{,}11{,}10{,}7{,}6{,}9{,}14{,}12{,}15{,}5{,}3{,}8{,}4{,}1)]{,}$\\
$[(0{,}14{,}4{,}11{,}12{,}8{,}6{,}1{,}13{,}7{,}3{,}2{,}10{,}9{,}5{,}15)]$
%-----------------------------------------------
\item HWP$^*(16;4^{12},16^{3})${,}\\
$[(0{,}1{,}2{,}3){,}(4{,}5{,}6{,}7){,}(8{,}9{,}10{,}11){,}(12{,}13{,}14{,}15)],[(0{,}2{,}1{,}5){,}(3{,}6{,}4{,}8){,}(7{,}9{,}11{,}12){,}(10{,}14{,}13{,}15)],$\\
$[(0{,}3{,}2{,}4){,}(1{,}6{,}5{,}7){,}(8{,}10{,}12{,}14){,}(9{,}15{,}11{,}13)],[(0{,}4{,}3{,}7){,}(1{,}8{,}2{,}5){,}(6{,}12{,}10{,}15){,}(9{,}13{,}11{,}14)],$\\
$[(0{,}5{,}2{,}6{,}){,}(1{,}3{,}10{,}13){,}(4{,}9{,}12{,}11){,}(7{,}15{,}8{,}14)],[(0{,}6{,}1{,}9){,}(2{,}10{,}5{,}11){,}(3{,}13{,}4{,}14){,}(7{,}12{,}8{,}15)],$\\
$[(0{,}7{,}2{,}12){,}(1{,}11{,}6{,}10){,}(3{,}14{,}4{,}13){,}(5{,}15{,}9{,}8)],[(0{,}8{,}6{,}2){,}(1{,}13{,}10{,}9){,}(3{,}12{,}4{,}15){,}(5{,}14{,}11{,}7)],$\\
$[(0{,}9{,}2{,}8){,}(1{,}12{,}3{,}15){,}(4{,}6{,}14{,}10){,}(5{,}13{,}7{,}11)],[(0{,}10{,}3{,}11){,}(1{,}15{,}2{,}14){,}(4{,}12{,}9{,}5){,}(6{,}8{,}7{,}13)]{,}$\\
$[(0{,}14{,}2{,}15){,}(1{,}7{,}10{,}8){,}(3{,}9{,}4{,}11){,}(5{,}12{,}6{,}13)],[(0{,}15{,}4{,}10){,}(1{,}14{,}6{,}11){,}(2{,}13{,}8{,}12){,}(3{,}5{,}9{,}7)],$\\
$[(0{,}11{,}9{,}3{,}1{,}4{,}2{,}7{,}8{,}13{,}12{,}5{,}10{,}6{,}15{,}14)],[(0{,}12{,}1{,}10{,}2{,}9{,}6{,}3{,}4{,}7{,}14{,}5{,}8{,}11{,}15{,}13)],$\\
$[(0{,}13{,}2{,}11{,}10{,}7{,}6{,}9{,}14{,}12{,}15{,}5{,}3{,}8{,}4{,}1)]$
%--------------------------------------------------
\item HWP$^*(16;4^{13},16^{2})${,}\\
$[(0{,}1{,}2{,}3){,}(4{,}5{,}6{,}7){,}(8{,}9{,}10{,}11){,}(12{,}13{,}14{,}15)],[(0{,}2{,}1{,}5){,}(3{,}6{,}4{,}8){,}(7{,}9{,}11{,}12){,}(10{,}14{,}13{,}15)],$\\
$[(0{,}3{,}2{,}4){,}(1{,}6{,}5{,}7){,}(8{,}10{,}9{,}13){,}(11{,}14{,}12{,}15)],[(0{,}4{,}1{,}7){,}(2{,}5{,}3{,}8){,}(6{,}12{,}10{,}15){,}(9{,}14{,}11{,}13)],$\\
$[(0{,}5{,}1{,}8){,}(2{,}6{,}9{,}12){,}(3{,}10{,}13{,}11){,}(4{,}14{,}7{,}15)],[(0{,}6{,}1{,}9){,}(2{,}8{,}4{,}11){,}(3{,}12{,}14{,}10){,}(5{,}15{,}7{,}13)],$\\
$[(0{,}7{,}2{,}10){,}(1{,}12{,}9{,}8){,}(3{,}15{,}5{,}14){,}(4{,}13{,}6{,}11)],[(0{,}8{,}6{,}2){,}(1{,}14{,}9{,}15){,}(3{,}5{,}13{,}7){,}(4{,}12{,}11{,}10)],$\\
$[(0{,}9{,}4{,}15){,}(1{,}13{,}2{,}14){,}(3{,}7{,}10{,}12){,}(5{,}11{,}6{,}8)],[(0{,}10{,}5{,}12){,}(1{,}15{,}3{,}11){,}(2{,}13{,}4{,}9){,}(6{,}14{,}8{,}7)]{,}$\\
$[(0{,}13{,}1{,}11){,}(2{,}15{,}8{,}14){,}(3{,}9{,}5{,}4){,}(6{,}10{,}7{,}12)],[(0{,}14{,}4{,}6){,}(1{,}3{,}13{,}10){,}(2{,}12{,}8{,}15){,}(5{,}9{,}7{,}11)],$\\
$[(0{,}15{,}9{,}1){,}(2{,}11{,}7{,}5){,}(3{,}14{,}6{,}13){,}(4{,}10{,}8{,}12)],[(0{,}11{,}9{,}3{,}1{,}4{,}2{,}7{,}8{,}13{,}12{,}5{,}10{,}6{,}15{,}14)],$\\
$[(0{,}12{,}1{,}10{,}2{,}9{,}6{,}3{,}4{,}7{,}14{,}5{,}8{,}11{,}15{,}13)]$
 %--------------------------------------------------
\item HWP$^*(16;4^{14},16^{1})${,}\\
$[(0{,}1{,}2{,}3){,}(4{,}5{,}6{,}7){,}(8{,}9{,}10{,}11){,}(12{,}13{,}14{,}15)],[(0{,}2{,}1{,}5){,}(3{,}4{,}6{,}8){,}(7{,}9{,}11{,}12){,}(10{,}14{,}13{,}15)],$\\
$[(0{,}3{,}2{,}4){,}(1{,}6{,}5{,}7){,}(8{,}10{,}9{,}13){,}(11{,}14{,}12{,}15)],[(0{,}4{,}1{,}7){,}(2{,}5{,}3{,}6){,}(8{,}12{,}9{,}14){,}(10{,}15{,}13{,}11)],$\\
$[(0{,}5{,}1{,}8){,}(2{,}6{,}3{,}7){,}(4{,}14{,}9{,}15){,}(10{,}12{,}11{,}13)],[(0{,}6{,}1{,}9){,}(2{,}8{,}4{,}10){,}(3{,}12{,}14{,}11){,}(5{,}13{,}7{,}15)],$\\
$[(0{,}7{,}3{,}10){,}(1{,}11{,}2{,}9){,}(4{,}12{,}6{,}13){,}(5{,}15{,}8{,}14)],[(0{,}8{,}1{,}12){,}(2{,}10{,}5{,}14){,}(3{,}9{,}7{,}13){,}(4{,}11{,}15{,}6)],$\\
$[(0{,}9{,}12{,}2){,}(1{,}3{,}8{,}15){,}(4{,}13{,}5{,}11){,}(6{,}14{,}10{,}7)],[(0{,}10{,}13{,}1){,}(2{,}15{,}9{,}8){,}(3{,}14{,}6{,}12){,}(4{,}7{,}11{,}5)]{,}$\\
$[(0{,}12{,}8{,}6){,}(1{,}10{,}3{,}11){,}(2{,}13{,}9{,}5){,}(4{,}15{,}7{,}14)],[(0{,}13{,}6{,}11){,}(1{,}15{,}2{,}14){,}(3{,}5{,}9{,}4){,}(7{,}12{,}10{,}8)],$\\
$[(0{,}14{,}3{,}15){,}(1{,}13{,}2{,}12){,}(4{,}9{,}6{,}10){,}(5{,}8{,}11{,}7)],[(0{,}15{,}3{,}13){,}(1{,}14{,}7{,}10){,}(2{,}11{,}6{,}9){,}(4{,}8{,}5{,}12)],$\\
$[(0{,}11{,}9{,}3{,}1{,}4{,}2{,}7{,}8{,}13{,}12{,}5{,}10{,}6{,}15{,}14)]$
%--------------------------------------------------
\item HWP$^*(15;3^{1},15^{13})${,}\\
$[(0{,}1{,}2){,}(3{,}4{,}5){,}(6{,}7{,}8){,}(9{,}10{,}11){,}(12{,}13{,}14)],[(0{,}2{,}4{,}8{,}10{,}6{,}12{,}14{,}1{,}3{,}5{,}7{,}9{,}11{,}13)]{,}$\\
$[(0{,}3{,}9{,}1{,}8{,}4{,}6{,}14{,}7{,}5{,}10{,}13{,}2{,}12{,}11)],[(0{,}4{,}7{,}1{,}9{,}3{,}2{,}14{,}13{,}6{,}5{,}8{,}11{,}12{,}10)]{,}$\\
$[(0{,}5{,}13{,}1{,}4{,}10{,}2{,}9{,}14{,}3{,}6{,}11{,}7{,}12{,}8)],[(0{,}6{,}1{,}5{,}2{,}3{,}7{,}4{,}9{,}8{,}13{,}11{,}14{,}10{,}12)]{,}$\\
$[(0{,}7{,}2{,}1{,}6{,}3{,}8{,}5{,}9{,}13{,}12{,}4{,}11{,}10{,}14)],[(0{,}8{,}1{,}7{,}3{,}10{,}4{,}2{,}13{,}9{,}12{,}5{,}14{,}11{,}6)]{,}$\\
$[(0{,}9{,}2{,}5{,}1{,}10{,}3{,}11{,}4{,}14{,}6{,}13{,}8{,}12{,}7)],[(0{,}10{,}1{,}11{,}2{,}6{,}4{,}3{,}14{,}8{,}7{,}13{,}5{,}12{,}9)]{,}$\\
$[(0{,}11{,}1{,}12{,}2{,}7{,}6{,}10{,}8{,}14{,}9{,}5{,}4{,}13{,}3)],[(0{,}12{,}1{,}14{,}2{,}10{,}5{,}6{,}9{,}7{,}11{,}8{,}3{,}13{,}4)]{,}$\\
$[(0{,}13{,}10{,}7{,}14{,}5{,}11{,}3{,}12{,}6{,}2{,}8{,}9{,}4{,}1)],[(0{,}14{,}4{,}12{,}3{,}1{,}13{,}7{,}10{,}9{,}6{,}8{,}2{,}11{,}5)]$
%--------------------------------------------------
\item HWP$^*(15;3^{3},15^{11})${,}\\
$[(0{,}1{,}2){,}(3{,}4{,}5){,}(6{,}7{,}8){,}(9{,}10{,}11){,}(12{,}13{,}14)],[(0{,}2{,}4){,}(6{,}8{,}10){,}(12{,}14{,}1){,}(3{,}5{,}7){,}(9{,}11{,}13)],$\\
$[(0{,}3{,}9){,}(1{,}8{,}2){,}(4{,}14{,}7){,}(5{,}10{,}13){,}(6{,}12{,}11)],[(0{,}4{,}7{,}1{,}9{,}3{,}2{,}14{,}13{,}6{,}5{,}8{,}11{,}12{,}10)],$\\
$[(0{,}5{,}13{,}1{,}4{,}10{,}2{,}9{,}14{,}3{,}6{,}11{,}7{,}12{,}8)],[(0{,}6{,}1{,}3{,}7{,}2{,}5{,}4{,}8{,}9{,}13{,}11{,}14{,}10{,}12)],$\\
$[(0{,}7{,}5{,}1{,}6{,}2{,}3{,}8{,}12{,}9{,}4{,}13{,}10{,}14{,}11)],[(0{,}8{,}1{,}5{,}2{,}6{,}3{,}13{,}4{,}11{,}10{,}9{,}12{,}7{,}14)],$\\
$[(0{,}9{,}1{,}7{,}6{,}4{,}12{,}2{,}10{,}3{,}11{,}5{,}14{,}8{,}13)],[(0{,}10{,}1{,}11{,}2{,}7{,}9{,}5{,}12{,}6{,}13{,}8{,}14{,}4{,}3)],$\\
$[(0{,}11{,}1{,}10{,}4{,}2{,}8{,}3{,}14{,}6{,}9{,}7{,}13{,}12{,}5)],[(0{,}12{,}4{,}9{,}6{,}14{,}2{,}13{,}7{,}10{,}8{,}5{,}11{,}3{,}1)],$\\
$[(0{,}13{,}3{,}12{,}1{,}14{,}5{,}9{,}2{,}11{,}8{,}4{,}6{,}10{,}7)],[(0{,}14{,}9{,}8{,}7{,}11{,}4{,}1{,}13{,}2{,}12{,}3{,}10{,}5{,}6)]$
%--------------------------------------------------
\item HWP$^*(15;3^{5},15^{9})${,}\\
$[(0{,}1{,}2){,}(3{,}4{,}5){,}(6{,}7{,}8){,}(9{,}10{,}11){,}(12{,}13{,}14)],[(0{,}2{,}4){,}(6{,}8{,}10){,}(12{,}14{,}1){,}(3{,}5{,}7){,}(9{,}11{,}13)],$\\
$[(0{,}3{,}9){,}(1{,}8{,}2){,}(4{,}14{,}7){,}(5{,}10{,}13){,}(6{,}12{,}11)],[(0{,}4{,}7){,}(1{,}9{,}3){,}(2{,}14{,}13){,}(5{,}11{,}8){,}(10{,}12{,}6)],$\\
$[(0{,}5{,}13){,}(1{,}4{,}10){,}(2{,}9{,}14){,}(3{,}6{,}11){,}(7{,}12{,}8)],[(0{,}6{,}1{,}3{,}2{,}5{,}4{,}8{,}9{,}7{,}13{,}10{,}14{,}11{,}12)],$\\
$[(0{,}7{,}1{,}5{,}2{,}3{,}8{,}4{,}6{,}13{,}11{,}14{,}9{,}12{,}10)],[(0{,}8{,}1{,}6{,}2{,}7{,}5{,}9{,}13{,}12{,}3{,}14{,}10{,}4{,}11)],$\\
$[(0{,}9{,}1{,}7{,}2{,}6{,}3{,}11{,}10{,}5{,}12{,}4{,}13{,}8{,}14)],[(0{,}10{,}2{,}8{,}3{,}7{,}11{,}1{,}13{,}4{,}12{,}9{,}5{,}14{,}6)],$\\
$[(0{,}11{,}2{,}10{,}3{,}12{,}1{,}14{,}4{,}9{,}8{,}13{,}7{,}6{,}5)],[(0{,}12{,}2{,}11{,}4{,}3{,}13{,}1{,}10{,}7{,}9{,}6{,}14{,}5{,}8)],$\\
$[(0{,}13{,}6{,}9{,}4{,}2{,}12{,}7{,}14{,}3{,}10{,}8{,}11{,}5{,}1)],[(0{,}14{,}8{,}12{,}5{,}6{,}4{,}1{,}11{,}7{,}10{,}9{,}2{,}13{,}3)]$
 %--------------------------------------------------
\item HWP$^*(15;3^{7},15^{7})${,}\\
$[(0{,}1{,}2){,}(3{,}4{,}5){,}(6{,}7{,}8){,}(9{,}10{,}11){,}(12{,}13{,}14)],[(0{,}2{,}4){,}(6{,}8{,}10){,}(12{,}14{,}1){,}(3{,}5{,}7){,}(9{,}11{,}13)],$\\
$[(0{,}3{,}9){,}(1{,}8{,}2){,}(4{,}14{,}7){,}(5{,}10{,}13){,}(6{,}12{,}11)],[(0{,}4{,}7){,}(1{,}9{,}3){,}(2{,}14{,}13){,}(5{,}11{,}8){,}(10{,}12{,}6)],$\\
$[(0{,}5{,}13){,}(1{,}4{,}10){,}(2{,}9{,}14){,}(3{,}6{,}11){,}(7{,}12{,}8)],[(0{,}6{,}1){,}(2{,}13{,}10){,}(3{,}8{,}14){,}(4{,}11{,}5){,}(7{,}9{,}12)],$\\
$[(0{,}7{,}11){,}(2{,}6{,}5){,}(1{,}14{,}9){,}(4{,}12{,}10){,}(3{,}13{,}8)],[(0{,}8{,}1{,}3{,}2{,}5{,}6{,}4{,}9{,}13{,}7{,}10{,}14{,}11{,}12)]{,}$\\
$[(0{,}9{,}2{,}3{,}7{,}1{,}5{,}8{,}12{,}4{,}6{,}13{,}11{,}14{,}10)],[(0{,}10{,}3{,}11{,}1{,}6{,}2{,}7{,}5{,}14{,}4{,}13{,}12{,}9{,}8)]{,}$\\
$[(0{,}11{,}2{,}8{,}4{,}1{,}10{,}7{,}13{,}3{,}12{,}5{,}9{,}6{,}14)],[(0{,}12{,}2{,}11{,}4{,}3{,}10{,}8{,}9{,}7{,}14{,}5{,}1{,}13{,}6)]{,}$\\
$[(0{,}13{,}1{,}7{,}2{,}12{,}3{,}14{,}6{,}9{,}4{,}8{,}11{,}10{,}5)],[(0{,}14{,}8{,}13{,}4{,}2{,}10{,}9{,}5{,}12{,}1{,}11{,}7{,}6{,}3)]$
 %-------------------------------------------------
\item HWP$^*(15;3^{9},15^{5})${,}\\
$[(0{,}1{,}2){,}(3{,}4{,}5){,}(6{,}7{,}8){,}(9{,}10{,}11){,}(12{,}13{,}14)],[(0{,}2{,}4){,}(6{,}8{,}10){,}(12{,}14{,}1){,}(3{,}5{,}7){,}(9{,}11{,}13)],$\\
$[(0{,}3{,}9){,}(1{,}8{,}2){,}(4{,}14{,}7){,}(5{,}10{,}13){,}(6{,}12{,}11)],[(0{,}4{,}7){,}(1{,}9{,}3){,}(2{,}14{,}13){,}(5{,}11{,}8){,}(10{,}12{,}6)],$\\
$[(0{,}5{,}13){,}(1{,}4{,}10){,}(2{,}9{,}14){,}(3{,}6{,}11){,}(7{,}12{,}8)],[(0{,}6{,}1){,}(2{,}13{,}10){,}(3{,}8{,}14){,}(4{,}11{,}5){,}(7{,}9{,}12)],$\\
$[(0{,}7{,}11){,}(2{,}6{,}5){,}(1{,}14{,}9){,}(4{,}12{,}10){,}(3{,}13{,}8)],[(0{,}8{,}12){,}(1{,}13{,}4){,}(2{,}10{,}3){,}(7{,}14{,}11){,}(5{,}6{,}9)],$\\
$[(0{,}9{,}8){,}(2{,}5{,}12){,}(11{,}4{,}13){,}(3{,}14{,}6){,}(1{,}10{,}7)],[(0{,}10{,}5{,}1{,}3{,}7{,}2{,}8{,}11{,}12{,}4{,}9{,}13{,}6{,}14)],$\\
$[(0{,}11{,}1{,}5{,}8{,}9{,}2{,}7{,}10{,}14{,}4{,}6{,}13{,}12{,}3)],[(0{,}12{,}1{,}7{,}13{,}3{,}10{,}8{,}4{,}2{,}11{,}14{,}5{,}9{,}6)],$\\
$[(0{,}13{,}7{,}5{,}14{,}8{,}1{,}6{,}2{,}12{,}9{,}4{,}3{,}11{,}10)],[(0{,}14{,}10{,}9{,}7{,}6{,}4{,}8{,}13{,}1{,}11{,}2{,}3{,}12{,}5)]$
%--------------------------------------------------
\item HWP$^*(15;3^{11},15^{3})${,}\\
$[(0{,}1{,}2){,}(3{,}4{,}5){,}(6{,}7{,}8){,}(9{,}10{,}11){,}(12{,}13{,}14)],[(0{,}2{,}4){,}(6{,}8{,}10){,}(12{,}14{,}1){,}(3{,}5{,}7){,}(9{,}11{,}13)],$\\
$[(0{,}3{,}9){,}(1{,}8{,}2){,}(4{,}14{,}7){,}(5{,}10{,}13){,}(6{,}12{,}11)],[(0{,}4{,}7){,}(1{,}9{,}3){,}(2{,}14{,}13){,}(5{,}11{,}8){,}(10{,}12{,}6)],$\\
$[(0{,}5{,}13){,}(1{,}4{,}10){,}(2{,}9{,}14){,}(3{,}6{,}11){,}(7{,}12{,}8)],[(0{,}6{,}1){,}(2{,}13{,}10){,}(3{,}8{,}14){,}(4{,}11{,}5){,}(7{,}9{,}12)],$\\
$[(0{,}7{,}11){,}(2{,}6{,}5){,}(1{,}14{,}9){,}(4{,}12{,}10){,}(3{,}13{,}8)],[(0{,}8{,}12){,}(1{,}13{,}4){,}(2{,}10{,}3){,}(7{,}14{,}11){,}(5{,}6{,}9)],$\\
$[(0{,}9{,}8){,}(2{,}5{,}12){,}(11{,}4{,}13){,}(3{,}14{,}6){,}(1{,}10{,}7)]{,}[(0{,}10{,}14){,}(12{,}4{,}3){,}(2{,}8{,}11){,}(7{,}5{,}9){,}(1{,}6{,}13)],$\\
$[(0{,}11{,}10){,}(1{,}5{,}8){,}(2{,}12{,}9){,}(3{,}7{,}13){,}(4{,}6{,}14)],[(0{,}12{,}1{,}3{,}10{,}9{,}6{,}4{,}8{,}13{,}7{,}2{,}11{,}14{,}5)],$\\
$[(0{,}13{,}12{,}5{,}14{,}10{,}8{,}9{,}4{,}2{,}3{,}11{,}1{,}7{,}6)],[(0{,}14{,}8{,}4{,}9{,}13{,}6{,}2{,}7{,}10{,}5{,}1{,}11{,}12{,}3)]$
 %--------------------------------------------------
\item HWP$^*(15;5^{1},15^{13})${,}\\
$[(0{,}1{,}2{,}3{,}4){,}(5{,}6{,}7{,}8{,}9){,}(10{,}11{,}12{,}13{,}14)],[(0{,}2{,}8{,}6{,}4{,}10{,}14{,}12{,}1{,}3{,}5{,}7{,}9{,}11{,}13)]{,}$\\
$[(0{,}3{,}9{,}1{,}8{,}2{,}4{,}14{,}7{,}5{,}10{,}13{,}6{,}12{,}11)],[(0{,}4{,}7{,}1{,}9{,}3{,}2{,}14{,}13{,}5{,}11{,}8{,}10{,}12{,}6)]{,}$\\
$[(0{,}5{,}13{,}1{,}4{,}2{,}10{,}9{,}14{,}3{,}6{,}11{,}7{,}12{,}8)],[(0{,}6{,}1{,}5{,}2{,}7{,}3{,}8{,}4{,}9{,}13{,}12{,}14{,}11{,}10)]{,}$\\
$[(0{,}7{,}2{,}1{,}6{,}3{,}10{,}4{,}5{,}8{,}13{,}11{,}14{,}9{,}12)],[(0{,}8{,}1{,}7{,}4{,}3{,}11{,}2{,}12{,}9{,}6{,}13{,}10{,}5{,}14)]{,}$\\
$[(0{,}9{,}2{,}5{,}1{,}10{,}3{,}13{,}4{,}11{,}6{,}14{,}8{,}12{,}7)],[(0{,}10{,}1{,}11{,}3{,}7{,}6{,}2{,}13{,}8{,}14{,}4{,}12{,}5{,}9)]{,}$\\
$[(0{,}11{,}1{,}12{,}2{,}6{,}9{,}4{,}13{,}7{,}10{,}8{,}3{,}14{,}5)],[(0{,}12{,}10{,}6{,}5{,}4{,}8{,}11{,}9{,}7{,}13{,}3{,}1{,}14{,}2)]{,}$\\
$[(0{,}13{,}9{,}8{,}7{,}14{,}6{,}10{,}2{,}11{,}5{,}3{,}12{,}4{,}1)],[(0{,}14{,}1{,}13{,}2{,}9{,}10{,}7{,}11{,}4{,}6{,}8{,}5{,}12{,}3)]$
 %--------------------------------------------------
\item HWP$^*(15;5^{3},15^{11})${,}\\
$[(0{,}1{,}2{,}3{,}4){,}(5{,}6{,}7{,}8{,}9){,}(10{,}11{,}12{,}13{,}14)],[(0{,}2{,}8{,}6{,}5){,}(10{,}14{,}12{,}1{,}3){,}(7{,}4{,}9{,}11{,}13)]{,}$\\
$[(0{,}3{,}9{,}1{,}7){,}(2{,}4{,}14{,}8{,}5){,}(10{,}13{,}6{,}12{,}11)],[(0{,}4{,}7{,}1{,}9{,}3{,}2{,}14{,}13{,}5{,}11{,}8{,}10{,}12{,}6)]{,}$\\
$[(0{,}5{,}13{,}1{,}4{,}2{,}10{,}9{,}14{,}3{,}6{,}11{,}7{,}12{,}8)],[(0{,}6{,}1{,}5{,}3{,}7{,}2{,}9{,}4{,}10{,}8{,}13{,}12{,}14{,}11)]{,}$\\
$[(0{,}7{,}3{,}1{,}6{,}2{,}5{,}4{,}11{,}14{,}9{,}13{,}8{,}12{,}10)],[(0{,}8{,}1{,}10{,}2{,}6{,}3{,}5{,}7{,}14{,}4{,}13{,}11{,}9{,}12)]{,}$\\
$[(0{,}9{,}2{,}1{,}8{,}3{,}11{,}4{,}5{,}12{,}7{,}13{,}10{,}6{,}14)],[(0{,}10{,}1{,}11{,}2{,}7{,}5{,}8{,}4{,}12{,}3{,}14{,}6{,}13{,}9)]{,}$\\
$[(0{,}11{,}1{,}12{,}2{,}13{,}4{,}8{,}14{,}5{,}9{,}7{,}6{,}10{,}3)],[(0{,}12{,}5{,}10{,}4{,}1{,}14{,}7{,}9{,}6{,}8{,}11{,}3{,}13{,}2)]{,}$\\
$[(0{,}13{,}3{,}8{,}2{,}12{,}4{,}6{,}9{,}10{,}7{,}11{,}5{,}14{,}1)],[(0{,}14{,}2{,}11{,}6{,}4{,}3{,}12{,}9{,}8{,}7{,}10{,}5{,}1{,}13)]$
 %--------------------------------------------------
\item HWP$^*(15;5^{5},15^{9})${,}\\
$[(0{,}1{,}2{,}3{,}4){,}(5{,}6{,}7{,}8{,}9){,}(10{,}11{,}12{,}13{,}14)],[(0{,}2{,}8{,}6{,}5){,}(10{,}14{,}12{,}1{,}3){,}(7{,}4{,}9{,}11{,}13)]{,}$\\
$[(0{,}3{,}9{,}1{,}7){,}(2{,}4{,}14{,}8{,}5){,}(10{,}13{,}6{,}12{,}11)],[(0{,}4{,}7{,}1{,}9){,}(3{,}2{,}14{,}13{,}5){,}(11{,}8{,}10{,}12{,}6)]{,}$\\
$[(0{,}5{,}13{,}2{,}1){,}(4{,}10{,}9{,}14{,}6){,}(3{,}11{,}7{,}12{,}8)],[(0{,}6{,}1{,}4{,}2{,}5{,}7{,}3{,}8{,}11{,}14{,}9{,}13{,}12{,}10)]{,}$\\
$[(0{,}7{,}2{,}6{,}3{,}1{,}5{,}4{,}12{,}14{,}11{,}9{,}10{,}8{,}13)],[(0{,}8{,}1{,}6{,}2{,}7{,}9{,}12{,}3{,}13{,}10{,}4{,}11{,}5{,}14)]{,}$\\
$[(0{,}9{,}2{,}10{,}1{,}8{,}4{,}3{,}6{,}14{,}5{,}12{,}7{,}13{,}11)],[(0{,}10{,}2{,}9{,}3{,}5{,}1{,}12{,}4{,}13{,}8{,}14{,}7{,}11{,}6)]{,}$\\
$[(0{,}11{,}1{,}10{,}3{,}14{,}2{,}12{,}5{,}9{,}7{,}6{,}13{,}4{,}8)],[(0{,}12{,}9{,}4{,}5{,}10{,}6{,}8{,}7{,}14{,}1{,}11{,}2{,}13{,}3)]{,}$\\
$[(0{,}13{,}1{,}14{,}3{,}7{,}10{,}5{,}11{,}4{,}6{,}9{,}8{,}12{,}2)],[(0{,}14{,}4{,}1{,}13{,}9{,}6{,}10{,}7{,}5{,}8{,}2{,}11{,}3{,}12)]$
 %--------------------------------------------------
\item HWP$^*(15;5^{7},15^{7})${,}\\
$[(0{,}1{,}2{,}3{,}4){,}(5{,}6{,}7{,}8{,}9){,}(10{,}11{,}12{,}13{,}14)],[(0{,}2{,}8{,}6{,}5){,}(10{,}14{,}12{,}1{,}3){,}(7{,}4{,}9{,}11{,}13)]{,}$\\
$[(0{,}3{,}9{,}1{,}7){,}(2{,}4{,}14{,}8{,}5){,}(10{,}13{,}6{,}12{,}11)],[(0{,}4{,}7{,}1{,}9){,}(3{,}2{,}14{,}13{,}5){,}(11{,}8{,}10{,}12{,}6)]{,}$\\
$[(0{,}5{,}13{,}2{,}1){,}(4{,}10{,}9{,}14{,}6){,}(3{,}11{,}7{,}12{,}8)],[(0{,}6{,}13{,}11{,}2){,}(1{,}10{,}3{,}7{,}14){,}(4{,}8{,}12{,}5{,}9)]{,}$\\
$[(0{,}7{,}6{,}2{,}11){,}(1{,}14{,}3{,}8{,}4){,}(13{,}10{,}5{,}12{,}9)],[(0{,}8{,}1{,}4{,}2{,}5{,}7{,}3{,}13{,}9{,}12{,}14{,}11{,}6{,}10)]{,}$\\
$[(0{,}9{,}2{,}6{,}1{,}5{,}4{,}3{,}12{,}10{,}7{,}13{,}8{,}11{,}14)],[(0{,}10{,}1{,}6{,}3{,}5{,}8{,}14{,}2{,}9{,}7{,}11{,}4{,}13{,}12)]{,}$\\
$[(0{,}11{,}1{,}8{,}7{,}5{,}10{,}2{,}13{,}4{,}12{,}3{,}14{,}9{,}6)],[(0{,}12{,}2{,}7{,}10{,}8{,}13{,}1{,}11{,}5{,}14{,}4{,}6{,}9{,}3)]{,}$\\
$[(0{,}13{,}3{,}6{,}14{,}5{,}1{,}12{,}7{,}2{,}10{,}4{,}11{,}9{,}8)],[(0{,}14{,}7{,}9{,}10{,}6{,}8{,}2{,}12{,}4{,}5{,}11{,}3{,}1{,}13)]{,}$
 %--------------------------------------------------
\item HWP$^*(15;5^{9},15^{5})${,}\\
$[(0{,}1{,}2{,}3{,}4){,}(5{,}6{,}7{,}8{,}9){,}(10{,}11{,}12{,}13{,}14)],[(0{,}2{,}8{,}6{,}5){,}(10{,}14{,}12{,}1{,}3){,}(7{,}4{,}9{,}11{,}13)]{,}$\\
$[(0{,}3{,}9{,}1{,}7){,}(2{,}4{,}14{,}8{,}5){,}(10{,}13{,}6{,}12{,}11)],[(0{,}4{,}7{,}1{,}9){,}(3{,}2{,}14{,}13{,}5){,}(11{,}8{,}10{,}12{,}6)]{,}$\\
$[(0{,}5{,}13{,}2{,}1){,}(4{,}10{,}9{,}14{,}6){,}(3{,}11{,}7{,}12{,}8)],[(0{,}6{,}13{,}11{,}2){,}(1{,}10{,}3{,}7{,}14){,}(4{,}8{,}12{,}5{,}9)]{,}$\\
$[(0{,}7{,}6{,}2{,}11){,}(1{,}14{,}3{,}8{,}4){,}(13{,}10{,}5{,}12{,}9)],[(0{,}8{,}13{,}3{,}12){,}(1{,}11{,}4{,}6{,}10){,}(2{,}5{,}14{,}7{,}9)]{,}$\\
$[(0{,}9{,}10{,}6{,}8){,}(1{,}13{,}12{,}3{,}5){,}(2{,}7{,}11{,}14{,}4)],[(0{,}10{,}2{,}6{,}1{,}4{,}3{,}13{,}8{,}7{,}5{,}11{,}9{,}12{,}14)]{,}$\\ 
$[(0{,}11{,}1{,}5{,}4{,}12{,}7{,}10{,}8{,}14{,}2{,}13{,}9{,}3{,}6)],[(0{,}12{,}4{,}13{,}1{,}8{,}2{,}9{,}6{,}14{,}11{,}5{,}10{,}7{,}3)]{,}$\\
$[(0{,}13{,}4{,}11{,}3{,}14{,}5{,}8{,}1{,}6{,}9{,}7{,}2{,}12{,}10)],[(0{,}14{,}9{,}8{,}11{,}6{,}3{,}1{,}12{,}2{,}10{,}4{,}5{,}7{,}13)]$
%--------------------------------------------------
\item HWP$^*(15;5^{11},15^{3})${,}\\
$[(0{,}1{,}2{,}3{,}4){,}(5{,}6{,}7{,}8{,}9){,}(10{,}11{,}12{,}13{,}14)],[(0{,}2{,}8{,}6{,}5){,}(10{,}14{,}12{,}1{,}3){,}(7{,}4{,}9{,}11{,}13)]{,}$\\
$[(0{,}3{,}9{,}1{,}7){,}(2{,}4{,}14{,}8{,}5){,}(10{,}13{,}6{,}12{,}11)],[(0{,}4{,}7{,}1{,}9){,}(3{,}2{,}14{,}13{,}5){,}(11{,}8{,}10{,}12{,}6)]{,}$\\
$[(0{,}5{,}13{,}2{,}1){,}(4{,}10{,}9{,}14{,}6){,}(3{,}11{,}7{,}12{,}8)],[(0{,}6{,}2{,}7{,}3){,}(1{,}12{,}4{,}5{,}10){,}(8{,}13{,}11{,}14{,}9)]{,}$\\
$[(0{,}7{,}9{,}4{,}13){,}(1{,}8{,}12{,}3{,}5){,}(2{,}10{,}6{,}14{,}11)],[(0{,}8{,}4{,}2{,}11){,}(1{,}14{,}7{,}6{,}3){,}(5{,}12{,}9{,}13{,}10)]{,}$\\
$[(0{,}9{,}6{,}13{,}12){,}(1{,}10{,}7{,}11{,}4){,}(2{,}5{,}14{,}3{,}8],[(0{,}12{,}5{,}9{,}2){,}(1{,}13{,}3{,}7{,}14){,}(4{,}8{,}11{,}6{,}10)]{,}$\\
$[(0{,}14{,}4{,}6{,}8){,}(1{,}11{,}5{,}7{,}13){,}(2{,}9{,}10{,}3{,}12)],[(0{,}10{,}2{,}6{,}1{,}4{,}3{,}13{,}8{,}7{,}5{,}11{,}9{,}12{,}14)]{,}$\\
$[(0{,}11{,}1{,}5{,}4{,}12{,}7{,}10{,}8{,}14{,}2{,}13{,}9{,}3{,}6)],[(0{,}13{,}4{,}11{,}3{,}14{,}5{,}8{,}1{,}6{,}9{,}7{,}2{,}12{,}10)]$
 %--------------------------------------------------
\item HWP$^*(15;5^{13},15^{1})${,}\\
$[(0{,}1{,}2{,}3{,}4){,}(5{,}6{,}7{,}8{,}9){,}(10{,}11{,}12{,}13{,}14)],[(0{,}2{,}8{,}6{,}5){,}(10{,}14{,}12{,}1{,}3){,}(7{,}4{,}9{,}11{,}13)]{,}$\\
$[(0{,}3{,}9{,}1{,}7){,}(2{,}4{,}14{,}8{,}5){,}(10{,}13{,}6{,}12{,}11)],[(0{,}4{,}7{,}1{,}9){,}(3{,}2{,}14{,}13{,}5){,}(11{,}8{,}10{,}12{,}6)]{,}$\\
$[(0{,}5{,}13{,}2{,}1){,}(4{,}10{,}9{,}14{,}6){,}(3{,}11{,}7{,}12{,}8)],[(0{,}6{,}13{,}11{,}2){,}(1{,}10{,}3{,}7{,}14){,}(4{,}8{,}12{,}5{,}9)]{,}$\\
$[(0{,}7{,}2{,}11{,}14){,}(1{,}6{,}9{,}12{,}3){,}(4{,}13{,}10{,}5{,}8)],[(0{,}8{,}13{,}3{,}12){,}(1{,}11{,}4{,}6{,}10){,}(2{,}5{,}14{,}7{,}9)]{,}$\\
$[(0{,}9{,}6{,}2{,}10){,}(1{,}8{,}7{,}11{,}5){,}(3{,}13{,}12{,}14{,}4)],[(0{,}10{,}4{,}11{,}3){,}(1{,}14{,}5{,}7{,}6){,}(2{,}12{,}9{,}13{,}8)]{,}$\\
$[(0{,}12{,}4{,}1{,}13){,}(2{,}9{,}7{,}5{,}10){,}(3{,}8{,}11{,}6{,}14)],[(0{,}13{,}4{,}5{,}11){,}(1{,}12{,}2{,}6{,}8){,}(3{,}14{,}9{,}10{,}7)]{,}$\\
$[(0{,}14{,}11{,}9{,}8){,}(1{,}4{,}2{,}7{,}13){,}(3{,}5{,}12{,}10{,}6)],[(0{,}11{,}1{,}5{,}4{,}12{,}7{,}10{,}8{,}14{,}2{,}13{,}9{,}3{,}6)]$
%--------------------------------------------------
\item HWP$^*(15;3^{1},5^{13})${,}\\
$[(0{,}1{,}2){,}(3{,}4{,}5){,}(6{,}7{,}8){,}(9{,}10{,}11){,}(12{,}13{,}14)],[(0{,}2{,}1{,}4{,}7){,}(3{,}14{,}5{,}9{,}8){,}(6{,}12{,}11{,}13{,}10)]{,}$\\
$[(0{,}3{,}9{,}1{,}8){,}(2{,}11{,}5{,}6{,}13){,}(4{,}12{,}14{,}7{,}10)],[(0{,}4{,}2{,}9{,}14){,}(1{,}6{,}10{,}7{,}12){,}(3{,}13{,}8{,}5{,}11)]{,}$\\
$[(0{,}5{,}14{,}13{,}1){,}(2{,}6{,}4{,}10{,}12){,}(3{,}7{,}11{,}8{,}9)],[(0{,}6{,}1{,}13{,}11){,}(2{,}10{,}3{,}12{,}7){,}(4{,}8{,}14{,}9{,}5)]{,}$\\
$[(0{,}7{,}1{,}14{,}10){,}(2{,}5{,}8{,}13{,}6){,}(3{,}11{,}4{,}9{,}12)],[(0{,}8{,}1{,}5{,}12){,}(2{,}3{,}10{,}14{,}11){,}(4{,}6{,}9{,}7{,}13)]{,}$\\
$[(0{,}9{,}13{,}12{,}5){,}(1{,}11{,}10{,}8{,}4){,}(2{,}7{,}14{,}6{,}3)],[(0{,}10{,}5{,}1{,}3){,}(9{,}2{,}8{,}11{,}12){,}(4{,}13{,}7{,}6{,}14)]{,}$\\
$[(0{,}11{,}14{,}2{,}13){,}(1{,}12{,}6{,}8{,}10){,}(3{,}5{,}7{,}9{,}4)],[(0{,}12{,}8{,}7{,}4){,}(1{,}10{,}2{,}14{,}3){,}(5{,}13{,}9{,}11{,}6)]{,}$\\
$[(0{,}13{,}5{,}10{,}9){,}(1{,}7{,}3{,}6{,}11){,}(2{,}12{,}4{,}14{,}8)],[(0{,}14{,}1{,}9{,}6){,}(2{,}4{,}11{,}7{,}5){,}(3{,}8{,}12{,}10{,}13)]$
%--------------------------------------------------
\item HWP$^*(15;3^{3},5^{11})${,}\\
$[(0{,}1{,}2){,}(3{,}4{,}5){,}(6{,}7{,}8){,}(9{,}10{,}11){,}(12{,}13{,}14)],[(0{,}2{,}4){,}(6{,}8{,}10){,}(12{,}14{,}1){,}(3{,}5{,}7){,}(9{,}11{,}13)],$\\
$[(0{,}3{,}9){,}(1{,}8{,}2){,}(4{,}14{,}7){,}(5{,}10{,}13){,}(6{,}12{,}11)],[(0{,}4{,}2{,}9{,}14){,}(1{,}6{,}10{,}7{,}12){,}(3{,}13{,}8{,}5{,}11)],$\\
$[(0{,}5{,}14{,}13{,}1){,}(2{,}6{,}4{,}10{,}12){,}(3{,}7{,}11{,}8{,}9)],[(0{,}6{,}1{,}13{,}11){,}(2{,}10{,}3{,}12{,}7){,}(4{,}8{,}14{,}9{,}5)],$\\
$[(0{,}7{,}1{,}14{,}10){,}(2{,}5{,}8{,}13{,}6){,}(3{,}11{,}4{,}9{,}12)],[(0{,}8{,}1{,}5{,}12){,}(2{,}3{,}10{,}14{,}11){,}(4{,}6{,}9{,}7{,}13)],$\\
$[(0{,}9{,}13{,}12{,}5){,}(1{,}11{,}10{,}8{,}4){,}(2{,}7{,}14{,}6{,}3)],[(0{,}10{,}5{,}1{,}3){,}(9{,}2{,}8{,}11{,}12){,}(4{,}13{,}7{,}6{,}14)],$\\
$[(0{,}11{,}14{,}2{,}13){,}(1{,}7{,}10{,}4{,}3){,}(5{,}9{,}8{,}12{,}6)],[(0{,}12{,}8{,}3{,}6){,}(1{,}4{,}11{,}7{,}9){,}(2{,}14{,}5{,}13{,}10)],$\\
$[(0{,}13{,}3{,}14{,}8){,}(1{,}10{,}9{,}6{,}11){,}(2{,}12{,}4{,}7{,}5)],[(0{,}14{,}3{,}8{,}7){,}(1{,}9{,}4{,}12{,}10){,}(2{,}11{,}5{,}6{,}13)]$
%--------------------------------------------------
\item HWP$^*(15;3^{5},5^{9})${,}\\
$[(0{,}1{,}2){,}(3{,}4{,}5){,}(6{,}7{,}8){,}(9{,}10{,}11){,}(12{,}13{,}14)],[(0{,}2{,}4){,}(6{,}8{,}1(0){,}(12{,}14{,}1){,}(3{,}5{,}7){,}(9{,}11{,}13)],$\\
$[(0{,}3{,}9){,}(1{,}8{,}2){,}(4{,}14{,}7){,}(5{,}10{,}13){,}(6{,}12{,}11)],[(0{,}4{,}7){,}(1{,}9{,}3){,}(2{,}14{,}13){,}(5{,}11{,}8){,}(10{,}12{,}6)],$\\
$[(0{,}5{,}13){,}(1{,}4{,}10){,}(2{,}9{,}14){,}(3{,}6{,}11){,}(7{,}12{,}8)],[(0{,}6{,}1{,}13{,}11){,}(2{,}10{,}3{,}12{,}7){,}(4{,}8{,}14{,}9{,}5)],$\\
$[(0{,}7{,}1{,}14{,}10){,}(2{,}5{,}8{,}13{,}6){,}(3{,}11{,}4{,}9{,}12)],[(0{,}8{,}1{,}5{,}12){,}(2{,}3{,}10{,}14{,}11){,}(4{,}6{,}9{,}7{,}13)],$\\
$[(0{,}9{,}13{,}12{,}5){,}(1{,}11{,}10{,}8{,}4){,}(2{,}7{,}14{,}6{,}3)],[(0{,}10{,}5{,}1{,}3){,}(9{,}2{,}8{,}11{,}12){,}(4{,}13{,}7{,}6{,}14)],$\\
$[(0{,}11{,}7{,}5{,}14){,}(1{,}10{,}4{,}2{,}12){,}(3{,}8{,}9{,}6{,}13)],[(0{,}12{,}2{,}13{,}1){,}(3{,}7{,}10{,}9{,}8){,}(4{,}11{,}14{,}5{,}6)],$\\
$[(0{,}13{,}10{,}2{,}6){,}(1{,}7{,}11{,}5{,}9){,}(3{,}14{,}8{,}12{,}4)],[(0{,}14{,}3{,}13{,}8){,}(1{,}6{,}5{,}2{,}11){,}(4{,}12{,}10{,}7{,}9)]$
%--------------------------------------------------
\item HWP$^*(15;3^{7},5^{7})${,}\\
$[(0{,}1{,}2){,}(3{,}4{,}5){,}(6{,}7{,}8){,}(9{,}10{,}11){,}(12{,}13{,}14)],[(0{,}2{,}4){,}(6{,}8{,}10){,}(12{,}14{,}1){,}(3{,}5{,}7){,}(9{,}11{,}13)],$\\
$[(0{,}3{,}9){,}(1{,}8{,}2){,}(4{,}14{,}7){,}(5{,}10{,}13){,}(6{,}12{,}11)],[(0{,}4{,}7){,}(1{,}9{,}3){,}(2{,}14{,}13){,}(5{,}11{,}8){,}(10{,}12{,}6)],$\\
$[(0{,}5{,}13){,}(1{,}4{,}10){,}(2{,}9{,}14){,}(3{,}6{,}11){,}(7{,}12{,}8)],[(0{,}6{,}1){,}(2{,}13{,}10){,}(3{,}8{,}14){,}(4{,}11{,}5){,}(7{,}9{,}12)],$\\
$[(0{,}7{,}11){,}(2{,}6{,}5){,}(1{,}14{,}9){,}(4{,}12{,}10){,}(3{,}13{,}8)],[(0{,}8{,}1{,}5{,}12){,}(2{,}3{,}10{,}14{,}11){,}(4{,}6{,}9{,}7{,}13)],$\\
$[(0{,}9{,}13{,}12{,}5){,}(1{,}11{,}10{,}8{,}4){,}(2{,}7{,}14{,}6{,}3)],[(0{,}10{,}5{,}1{,}3){,}(9{,}2{,}8{,}11{,}12){,}(4{,}13{,}7{,}6{,}14)],$\\
$[(0{,}11{,}1{,}13{,}6){,}(2{,}5{,}14{,}8{,}12){,}(3{,}7{,}10{,}9{,}4)],[(0{,}12{,}3{,}14{,}10){,}(1{,}7{,}5{,}8{,}13){,}(2{,}11{,}4{,}9{,}6)],$\\
$[(0{,}13{,}3{,}11{,}14){,}(1{,}10{,}7{,}2{,}12){,}(4{,}8{,}9{,}5{,}6)],[(0{,}14{,}5{,}9{,}8){,}(1{,}6{,}13{,}11{,}7){,}(2{,}10{,}3{,}12{,}4)]$
%--------------------------------------------------
\item HWP$^*(15;3^{9},5^{5})${,}\\
$[(0{,}1{,}3){,}(2{,}6{,}7){,}(4{,}5{,}8){,}(9{,}13{,}12){,}(10{,}11{,}14)],[(0{,}2{,}10){,}(1{,}6{,}8){,}(3{,}13{,}7){,}(4{,}12{,}14){,}(5{,}11{,}9)],$\\
$[(0{,}7{,}13){,}(1{,}8{,}6){,}(2{,}9{,}12){,}(3{,}10{,}14){,}(4{,}11{,}5)],[(0{,}9{,}8){,}(1{,}2{,}11){,}(3{,}14{,}12){,}(4{,}13{,}10){,}(5{,}7{,}6)],$\\
$[(0{,}10{,}5){,}(1{,}4{,}3){,}(2{,}13{,}8){,}(6{,}14{,}9){,}(7{,}11{,}12)],[(0{,}11{,}6){,}(1{,}12{,}13){,}(2{,}8{,}14){,}(3{,}4{,}9){,}(5{,}10{,}7)],$\\
$[(0{,}12{,}4){,}(1{,}14{,}5){,}(2{,}3{,}6){,}(7{,}10{,}8){,}(9{,}11{,}13)],[(0{,}13{,}14){,}(1{,}5{,}12){,}(2{,}7{,}4){,}(3{,}8{,}11){,}(6{,}9{,}10)],$\\
$[(0{,}14{,}11){,}(1{,}13{,}4){,}(2{,}12{,}10){,}(3{,}5{,}6){,}(7{,}8{,}9)],[(0{,}3{,}9{,}1{,}7){,}(2{,}4{,}14{,}8{,}5){,}(10{,}13{,}6{,}12{,}11)],$\\
$[(0{,}4{,}7{,}1{,}9){,}(3{,}2{,}14{,}13{,}5){,}(11{,}8{,}10{,}12{,}6)],[(0{,}5{,}13{,}2{,}1){,}(4{,}10{,}9{,}14{,}6){,}(3{,}11{,}7{,}12{,}8)],$\\
$[(0{,}6{,}13{,}11{,}2){,}(1{,}10{,}3{,}7{,}14){,}(4{,}8{,}12{,}5{,}9)],[(0{,}8{,}13{,}3{,}12){,}(1{,}11{,}4{,}6{,}10){,}(2{,}5{,}14{,}7{,}9)]$
\end{enumerate}
\end{document}